\documentclass[a4paper,11pt,reqno]{amsart}
\usepackage{amsmath,amssymb}
\usepackage{mathrsfs}
\usepackage{amsfonts}
\usepackage[dvipsnames]{xcolor}
\usepackage{graphicx}
\usepackage{float}  
\usepackage{cite}
\usepackage{cases}
\usepackage{comment}
\usepackage{indentfirst}
\usepackage{needspace}
\allowdisplaybreaks

\theoremstyle{plain}
\newtheorem{theorem}{Theorem}[section]
\newtheorem{lemma}[theorem]{Lemma}
\newtheorem{proposition}[theorem]{Proposition}
\newtheorem{corollary}[theorem]{Corollary}

\theoremstyle{definition}

\theoremstyle{remark}

\numberwithin{equation}{section}

\usepackage[textwidth=16.0cm,textheight=21.0cm,centering]{geometry}
\usepackage[colorlinks=true,citecolor=blue,linkcolor=blue]{hyperref}

\begin{document}
\title[$C^{1,\alpha}$ and $W^{2,1+\varepsilon}$ regularity]
{$C^{1,\alpha}$ and $W^{2,1+\varepsilon}$ regularity for the $2$-Hessian equation}

\author[Y. Lyu]{Yasheng Lyu}
\address{School of Mathematics and Statistics, Xi'an Jiaotong University, Xi'an, Shaanxi 710049, People's Republic of China}
\email{lvysh21@stu.xjtu.edu.cn}

\date{}
\subjclass[2020]{Primary 35J60; Secondary 35B45, 35B65.}
\keywords{$2$-Hessian equation, $C^{1,\alpha}$ estimates, $W^{2,1+\varepsilon}$ estimates.}

\begin{abstract}
In this paper, for every $\varepsilon>0$, we establish interior and global a priori $C^{1,\alpha}$ estimates for some $\alpha\in(0,1)$ from $W^{2,1+\varepsilon}$ bounds for bounded smooth $2$-admissible solutions of the $2$-Hessian equation, with the interior estimate requiring only an upper bound on the density.
Under positive upper and lower density bounds, we prove global $W^{2,1+\varepsilon}$ estimates for some $\varepsilon>0$ on bounded strictly mean convex $C^{2,\beta}$ domains with $C^{2,\beta}$ boundary values.
For densities bounded above and below by positive constants, the interior $C^{1,\alpha}$ estimate follows by combining the first result with an interior $W^{2,1+\varepsilon}$ estimate obtained by a slight modification of an existing argument.
Combining the first result with the global $W^{2,1+\varepsilon}$ estimate yields the global $C^{1,\alpha}$ estimate.
Known examples show that the $C^{2,\beta}$ regularity assumptions on the boundary and boundary values are optimal in the H\"older scale for both the global $W^{2,1+\varepsilon}$ estimate and the global $C^{1,\alpha}$ estimate.
\end{abstract}

\maketitle
\section{Introduction}

In this paper, we study interior $C^{1,\alpha}$ and $W^{2,1+\varepsilon}$ estimates for the $2$-Hessian equation
\begin{equation}\label{eqn1.1}
\sigma_{2}\big[D^{2}u\big]=f\quad\text{in }B_{2}\subset\mathbb{R}^{n},
\end{equation}
and the corresponding global estimates for
\begin{equation}\label{eqn1.2}
\begin{cases}
\sigma_{2}\big[D^{2}u\big]=f & \text{in }\Omega,\\
u=\varphi & \text{on }\partial\Omega,
\end{cases}
\end{equation}
where $\Omega\subset\mathbb{R}^{n}$ is a bounded domain, $n\geq2$, and the operators are defined by
\[
\sigma_{k}[A]:=\sigma_{k}(\lambda(A))
:=\sum_{1\leq i_{1}<\dots<i_{k}\leq n}\lambda_{i_{1}}\dots\lambda_{i_{k}},
\quad k=1,2,
\]
for an arbitrary real symmetric matrix $A$ with eigenvalues $\lambda(A):=(\lambda_{1},\dots,\lambda_{n})$.
Equivalently,
\[
\sigma_{1}[A]=\operatorname{tr}A,\quad
\sigma_{2}[A]=\frac{1}{2}\left((\operatorname{tr}A)^{2}-|A|^{2}\right),
\]
where $\operatorname{tr}A:=\sum_{i}A_{ii}$ and $|A|^{2}:=\sum_{i,j}A_{ij}^{2}$.
Define the G{\aa}rding cone
\[
\Gamma_{2}:=\left\{\lambda\in\mathbb{R}^{n}:\ \sigma_{2}(\lambda)>0,\ \sigma_{1}(\lambda)>0\right\}.
\]
A function $u\in C^{2}(\Omega)$ is called $2$-admissible if $\lambda(D^{2}u(x))\in\Gamma_{2}$ for every $x\in\Omega$.
The operator $\sigma_{2}^{1/2}[A]$ is elliptic, concave and homogeneous of degree one for $\lambda(A)\in\Gamma_{2}$; see Caffarelli-Nirenberg-Spruck \cite{CNS}.

In dimension two, $\sigma_{2}=\det$ and admissibility selects the convex branch.
For strictly convex solutions of the Monge-Amp\`ere equation, Caffarelli proved interior $C^{1,\alpha}$ regularity under positive upper and lower density bounds \cite{CaffLoc,CaffC1alpha}, and $W^{2,p}$ estimates for every finite $p>1$ for positive continuous densities \cite{CaffWp}.
For densities bounded above and below by positive constants, De Philippis-Figalli \cite{DF} established $W^{2,1}_{\mathrm{loc}}$ regularity for strictly convex Alexandrov solutions; De Philippis-Figalli-Savin \cite{DFS} and independently Schmidt \cite{Schmidt} obtained $W^{2,1+\varepsilon}_{\mathrm{loc}}$ regularity.

For convex solutions of the $k$-Hessian equation in bounded domains with zero boundary values and positive upper and lower density bounds, Chen-Wang-Wu \cite{CWW} proved interior $C^{1,\alpha}$ estimates.

Our initial idea of deriving gradient H\"older estimates from Hessian integral bounds is inspired by earlier conditional regularity results.
For convex generalized solutions of $\det D^{2}u=1$ in dimensions $n\geq3$, Urbas \cite{Urbas1988} derived strict convexity and classical interior regularity from $W^{2,p}$ control for $p>n(n-1)/2$; Collins-Mooney \cite{CollinsMooney2017} reached the critical exponent.
For the possibly degenerate $2$-Hessian equation, Urbas \cite{Urbas2000} proved local gradient H\"older regularity for $W^{2,p}$ weak solutions with $p>n-1$ ($p\geq2$ when $n=2$), assuming $W^{2,p}$ regularity of the square-root density.
His estimate follows from a Hessian monotonicity formula and involves derivatives of that density.
For positive $C^{1,1}$ densities and $n\geq3$, Urbas \cite{Urbas2000,Urbas2001} obtained interior Hessian bounds from $W^{2,p}$ control, first for $p>n$ and then for $p>n-1$.
Urbas \cite{Urbas2007} extended the latter weak-solution result to positive Lipschitz densities and, using Liu-Bao's estimate \cite{LiuBao}, obtained classical interior regularity for $p>2$ with positive $C^{1,1}$ densities.
For smooth $2$-admissible solutions of \eqref{eqn1.1} with $f=1$, Mooney \cite{Mooney25} gave a direct proof of the Hessian estimate from $W^{2,p}$ control for every $p>2$, using an ABP contact-set estimate and a growth argument for the linearized equation.

For $2$-admissible solutions, Li-Wu \cite{LiWu} proved interior Hessian estimates for constant positive densities; Hung-Liu-Trudinger \cite{HLT} subsequently gave another proof.
For positive $C^{\alpha}$ densities, Chen-Zhou-Zhu \cite{CZZ} established interior $C^{2,\alpha}$ estimates, with constants depending on the density H\"older norm.

Our first theorem converts $W^{2,1+\varepsilon}$ control into a gradient H\"older estimate with an explicit exponent for every $\varepsilon>0$.
It applies on the full $2$-admissible branch and requires only a density upper bound.

\begin{theorem}\label{thm1.1}
Let $u\in C^{4}(B_{2})$ be a bounded $2$-admissible solution of \eqref{eqn1.1} with $0<f\leq\Lambda$ in $B_{2}$.
Then for any $\varepsilon>0$ such that $u\in W^{2,1+\varepsilon}(B_{2})$, we have
\begin{equation}\label{eqn1.3}
\|u\|_{C^{1,\alpha}(B_{1/2})}\leq C,
\end{equation}
where
\[
\alpha:=\frac{\varepsilon}{n+\varepsilon},
\]
and $C$ depends only on $n$, $\varepsilon$, $\Lambda$, $\|u\|_{L^{\infty}(B_{2})}$, and $\|u\|_{W^{2,1+\varepsilon}(B_{2})}$.
\end{theorem}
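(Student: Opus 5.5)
The plan is to prove a Morrey--Campanato decay estimate for the Hessian mass,
\[
\int_{B_r(x_0)}|D^2u|\,dx\le C\,r^{\,n-1+\alpha},\qquad x_0\in B_{1/2},\ 0<r\le \tfrac14,
\]
with $\alpha=\varepsilon/(n+\varepsilon)$. Morrey's lemma applied to $Du$ then gives $[Du]_{C^{\alpha}(B_{1/2})}\le C$. The bounds on $\|u\|_{L^\infty}$ and $\|Du\|_{L^\infty(B_{1/2})}$ follow from the hypotheses and interpolation, since $u$ is subharmonic and $W^{2,1+\varepsilon}$.

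The exponent is a Sobolev--Morrey number: $\alpha=1-n/p$ with $p=n+\varepsilon$. So the target is effectively ``$D^2u\in L^{n+\varepsilon}$ in the Morrey sense'', to be extracted from $L^{1+\varepsilon}$ control together with the structure of the equation.

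\textbf{Structural facts.} In $\Gamma_2$ we have $\sigma_1^2=|\lambda|^2+2\sigma_2\ge|\lambda|^2$. Hence
\[
|D^2u|\le\Delta u \qquad\text{and}\qquad 0\le(\Delta u)^2-|D^2u|^2=2f\le2\Lambda .
\]
This reduces everything to the single nonnegative quantity $w=\Delta u$. The Newton tensor $T=\Delta u\,I-D^2u$ is nonnegative, bounded by $(n-1)\Delta u$, and divergence free. This yields the divergence identity $2\sigma_2[D^2u]=\partial_i(T^{ij}u_j)$, and integrating it over a ball gives
\[
\int_{\partial B_r}T^{ij}u_j\nu_i=2\int_{B_r}f\le Cr^n .
\]
Separately, $\int_{B_r}\Delta u=\int_{\partial B_r}\partial_\nu u$.

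\textbf{Decay estimate.} Set $\phi(r)=\int_{B_r}\Delta u$, so that $\phi'(r)=\int_{\partial B_r}\Delta u$.
\begin{itemize}
\item Subtract the affine part: after replacing $u$ by $u-\ell$, write $\phi(r)$ as a boundary flux of $Du-\bar{Du}$ through $\partial B_r$. Pairing that flux with the Newton-tensor identity uses the near-degeneracy $(\Delta u)^2-|D^2u|^2\le2\Lambda$: on the set where $\Delta u$ is large, $D^2u$ is close to rank one, and there $T$ controls the tangential part of $D^2u$.
\item Bound the sphere terms by Hölder with the $L^{1+\varepsilon}$ norm on spheres:
\[
\int_{\partial B_r}\Delta u\le\Big(\int_{\partial B_r}(\Delta u)^{1+\varepsilon}\Big)^{\frac1{1+\varepsilon}}r^{\frac{(n-1)\varepsilon}{1+\varepsilon}} .
\]
The factor $\int_{\partial B_r}(\Delta u)^{1+\varepsilon}$ is integrable in $r$ with total mass $\le\|u\|_{W^{2,1+\varepsilon}}^{1+\varepsilon}$.
\item The aim is a differential inequality of the form $\phi(r)\le C r\,\phi'(r)^{\theta}\,(\cdots)+Cr^n$. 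Integrating it in $r$ after choosing good radii (pigeonhole/Chebyshev in $r$), and iterating dyadically, should produce $\phi(r)\le Cr^{n-1+\alpha}$. The exponent bookkeeping $\frac{n\varepsilon}{1+\varepsilon}$ versus $n-1+\frac{\varepsilon}{n+\varepsilon}$ is what fixes $\alpha=\varepsilon/(n+\varepsilon)$.
\end{itemize}

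\textbf{Main obstacle.} The hard step is the decay estimate, namely producing genuine improvement over the naive Hölder bound $\phi(r)\le Cr^{n\varepsilon/(1+\varepsilon)}$. That bound is useless for small $\varepsilon$, so the improvement must come from the equation. I would first try the Newton-tensor flux identity, combined with a Poincaré inequality on spheres for the tangential gradient of $u_\nu-\overline{u_\nu}$, to close an ODE inequality for $\phi$. If that fails, the fallback is an ABP/contact-set argument in the spirit of Mooney: show that the region where $\Delta u\gtrsim r^{\alpha-1}$ has small measure at scale $r$.
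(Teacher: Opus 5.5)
\textbf{Gap: the decay estimate, which is the whole theorem, is not proved.} Your reduction is correct, but it only restates the problem. By Morrey's Dirichlet-growth lemma, $\int_{B_r(x_0)}\Delta u\le Cr^{n-1+\alpha}$ gives $[Du]_{C^\alpha}\le C$. Conversely, $\int_{B_r(x_0)}\Delta u=\int_{\partial B_r(x_0)}(Du-Du(x_0))\cdot\nu$, so the decay is also implied by $Du\in C^\alpha$. Everything therefore rests on the decay step, and there your proposal gives no working mechanism.

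The heuristic behind it is false for $n\ge3$. The identity $(\Delta u)^2-|D^2u|^2=2f$ only places $\lambda(D^2u)/\Delta u$ near $\partial\Gamma_2$, and that boundary contains points far from rank one, e.g.\ $(2,2,-1,0,\dots,0)/3$.

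The Newton-tensor flux identity also yields no improvement. Write it on $\partial B_r$ for $w=u-\ell$ with $D\ell=Du(x_0)$:
\[
\frac{n-1}{r}\int_{\partial B_r}w_\nu^2-\frac1r\int_{\partial B_r}|\nabla_\tau w|^2-2\int_{\partial B_r}D^2u(\nu,\nabla_\tau w)=2\int_{B_r}f .
\]
This is sign-indefinite. The tangential term and the cross term are controlled only through $\operatorname{osc}_{B_r}Du$; the cross term, for instance, is bounded by $\operatorname{osc}_{B_r}Du\int_{\partial B_r}\Delta u$. Yet $\operatorname{osc}_{B_r}Du$ is exactly the quantity you are trying to bound. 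After Cauchy--Schwarz you get back at best the trivial $\phi(r)\le|\partial B_r|\operatorname{osc}_{B_r}Du$. So the ODE inequality, and with it the exponent $\varepsilon/(n+\varepsilon)$, is never derived.

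\textbf{Missing idea: the equation must force a large \emph{set} where the Hessian is large.} A single large second difference has to be converted into such a set. Your fallback points the other way: an upper bound on $|\{\Delta u\gtrsim M\}|$ is just Chebyshev. Here is what the paper does.
\begin{enumerate}
\item It maximizes $K=\max d(x)^{n-1+\alpha}h^{1-\alpha}(\Delta_{he}^2u(x))_+$. One-sided second differences suffice by Lemma~\ref{thm3.1}, because $u$ is subharmonic.
\item It rescales to the maximizer, where $\Delta^2_{he_1}v(0)=M=Kh^{\alpha-1}$, and averages mollifications along $e_1$.
\item The averaged Hessian $B$ satisfies $B_{11}\ge M/32$ and $\operatorname{tr}B\le CM$. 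Then $|B|<\operatorname{tr}B$ gives $\sigma_2^{ij}[B]\omega_i\omega_j\ge B_{11}^2/(2\operatorname{tr}B)\gtrsim M$ for unit $\omega\perp e_1$. This transverse ellipticity is the correct version of your rank-one heuristic.
\item An $(n-1)$-dimensional ABP contact-set argument in the transverse variables follows. The upper bound $\sigma_2\le\Lambda$ caps the transverse concavity at contact points by $\operatorname{tr}R\lesssim\Lambda/M+Mh^2/r^2$. This forces a transverse contact set of measure $\gtrsim r^{n-1}$ with $r\sim\min\{1,Mh\}\gg h$.
\item Jensen and Fubini along the $e_1$-segments give $M^{1+\varepsilon}r^{n-1}h\lesssim\int(\Delta v)^{1+\varepsilon}$. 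In the main case this reads $K^{n+\varepsilon}h^{\alpha(n+\varepsilon)-\varepsilon}\lesssim1$, which bounds $K$ precisely when $\alpha\le\varepsilon/(n+\varepsilon)$.
\end{enumerate}
The exponent comes from this tube of transverse width $\sim Mh$, forced by the upper density bound. Nothing in your plan produces it.
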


For the remaining results in this section, we assume
\begin{equation}\label{eqn1.4}
0<\lambda\leq f\leq\Lambda,
\end{equation}
where $\lambda$ and $\Lambda$ are positive constants.

A $C^{2}$ boundary $\partial\Omega$ is called strictly mean convex if its principal curvatures $\kappa_{1},\dots,\kappa_{n-1}$ satisfy
\begin{equation}\label{eqn1.5}
\sum_{i=1}^{n-1}\kappa_{i}\geq h_{0}>0\quad\text{on }\partial\Omega,
\end{equation}
where we use the curvature convention of Caffarelli-Nirenberg-Spruck \cite{CNS}, with positive principal curvatures on the unit sphere.
In the global estimates, dependence on $\Omega$ is only through $h_{0}$, a finite boundary graph cover with fixed radii and $C^{2,\beta}$ bounds, a positive collar width, and a diameter bound.

\Needspace{18\baselineskip}
\begin{theorem}\label{thm1.2}
Let $\partial\Omega\in C^{2,\beta}$ be strictly mean convex and $\varphi\in C^{2,\beta}(\partial\Omega)$ for some $0<\beta<1$.
Let $u\in C^{4}(\overline{\Omega})$ be a $2$-admissible solution of \eqref{eqn1.2} with \eqref{eqn1.4} holding in $\Omega$.
Then for any $\varepsilon>0$, we have
\begin{equation}\label{eqn1.6}
\|u\|_{C^{1,\alpha}(\overline{\Omega})}\leq C,
\end{equation}
where
\begin{equation}\label{eqn1.7}
\alpha:=\min\left\{\frac{\varepsilon}{n+\varepsilon},
\frac{\beta}{2},\frac{1}{12n+1}\right\},
\end{equation}
and $C$ depends only on $n$, $\lambda$, $\Lambda$, $\varepsilon$, $\beta$, $\Omega$, $\|\varphi\|_{C^{2,\beta}(\partial\Omega)}$, $\|u\|_{L^{\infty}(\Omega)}$, and $\|u\|_{W^{2,1+\varepsilon}(\Omega)}$.
\end{theorem}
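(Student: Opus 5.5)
We split $\overline{\Omega}$ into an interior region and a boundary collar, and patch local estimates together. In the interior, we apply Theorem \ref{thm1.1}, rescaled. For a ball $B_{2r}(x_0)\subset\Omega$, the function $v(y):=r^{-2}u(x_0+ry)$ solves $\sigma_2[D^2v]=f(x_0+ry)\le\Lambda$ in $B_2$. Its $L^\infty$ and $W^{2,1+\varepsilon}$ norms are controlled by those of $u$ times powers of $r^{-1}$. Hence on the region $\{\operatorname{dist}(x,\partial\Omega)>\delta\}$, with $\delta$ fixed by the collar width, we obtain a $C^{1,\varepsilon/(n+\varepsilon)}$ bound with a constant depending on $\delta$. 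So the whole difficulty is the collar $\{\operatorname{dist}(x,\partial\Omega)<\delta\}$.

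Near the boundary we use the classical barrier theory of Caffarelli--Nirenberg--Spruck and Trudinger. Strict mean convexity \eqref{eqn1.5} and $\varphi\in C^{2,\beta}$ give a subsolution $\underline u$: the solution of $\sigma_2=\Lambda'$ type, or $\varphi+K(\rho-\rho^{2}...)$ built from the distance function. A superharmonic-type upper barrier comes from $\Delta u>0$, i.e.\ the harmonic extension of $\varphi$. Comparison then gives $|u-\varphi_{\mathrm{ext}}|\le C\,d(x)$, and a global gradient bound follows from the standard maximum principle for $|Du|$. We want more than Lipschitz: Hölder continuity of $Du$ at boundary points. The plan is a pointwise $C^{1,\gamma}$ estimate at each $x_0\in\partial\Omega$: after subtracting the tangent plane of $\varphi$ and flattening, show $|u(x)-u(x_0)-Du(x_0)\cdot(x-x_0)|\le C|x-x_0|^{1+\gamma}$.

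The tangential part of $Du(x_0)$ is prescribed by $\varphi$. For the normal derivative, we compare $u$ in the half-ball $B_r(x_0)\cap\Omega$ with barriers of the form $\ell+ A d(x)\pm B|x-x_0|^{1+\gamma}$. These use the lower density bound $\lambda$ for the upper side: $\sigma_2^{1/2}$ is concave, so $u$ is a supersolution of the linearized operator, and $\sigma_2\ge\lambda$ keeps the barrier below. The $C^{2,\beta}$ data give the $\beta/2$ restriction. The exponent $1/(12n+1)$ should come from an iteration: in dyadic half-balls, oscillation of $u_\nu$ decays by a fixed factor via a Krylov--Safonov type boundary Harnack applied to the linearized (uniformly elliptic only in an integral sense) equation. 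Where uniform ellipticity fails, we use the $W^{2,1+\varepsilon}$ control only through the interior theorem, so the boundary step should rely solely on $\lambda,\Lambda$ and $\sigma_1\ge c$.

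Finally, we combine the two estimates. For $x,y$ in the collar with $|x-y|\ll d(x)$, we apply the rescaled interior estimate on $B_{d(x)/2}(x)$, subtracting the boundary tangent plane so that the rescaled function has $L^\infty$ norm $\lesssim d^{1+\gamma}$. For $|x-y|\gtrsim d(x)$, we pass through nearest boundary points and use the pointwise boundary estimate. This is the standard gluing argument, and it yields $\alpha$ as the minimum in \eqref{eqn1.7}. The main obstacle is the rescaling of the $W^{2,1+\varepsilon}$ norm near the boundary: the interior bound is not scale-invariant. I would handle it by applying Theorem \ref{thm1.1} in a form where the constant depends on $\|D^2v\|_{L^{1+\varepsilon}}$ of the normalized function, and absorb the blow-up factor $d^{-n/(1+\varepsilon)}$ by Hölder, which is exactly where $\alpha\le\varepsilon/(n+\varepsilon)$ must be used. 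The boundary oscillation decay, with its explicit constant $1/(12n+1)$, is the second delicate step.
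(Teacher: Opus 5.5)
Your architecture (interior estimate from Theorem~\ref{thm1.1}, a pointwise $C^{1,\alpha_0}$ expansion at boundary points, then gluing) matches the paper's, but neither substantive step goes through as proposed.

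\emph{Boundary step.} Barriers give only $|u-\Phi|\le-C\psi$, which is a Lipschitz bound at $\partial\Omega$. The oscillation decay for $u_\nu$ that you need has no available mechanism. With only $\lambda\le f\le\Lambda$ there is no Hessian control near $\partial\Omega$, so $\sigma_2^{ij}[D^2u]$ has no ellipticity bounds there. Differentiating the equation gives $\mathcal L_u u_k=\partial_k f^{1/2}$, and constants may not depend on $Df$. A boundary Harnack for operators ``elliptic in an integral sense'' is not an existing tool. The same objection rules out your global gradient bound via the maximum principle for $|Du|$; in the paper that bound is a by-product of the $C^{1,\alpha}$ argument.

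The exponents are also misattributed. The paper approximates $u$ at every scale by $C^{1,1}$ functions (Proposition~\ref{prop:boundary-approximation}):
\begin{itemize}
\item on a mollified outer domain $\Omega_h$ it solves \eqref{eqn2.7}, with the density of $u$ switched to $\lambda/4$ where $\Delta v$ is large compared with $-P\psi_h$;
\item it proves $\|D^2v_R\|_\infty\le C(h^{-(2-\beta)}+P^2)$, with constants independent of derivatives of the density;
\item it obtains $|u-v_R|\le CR^{1+\alpha_0}$ on $\{d\le R\}$ from ABP together with a collar barrier (away from a collar of width $r$, the densities differ only on a set of measure $\le C/(Pr)$).
\end{itemize}
With $h=R^{1/2}$, $P=R^{-1/3}$, $r=R^{2\alpha_0}$, the restriction $\beta/2$ comes from $h^{-(2-\beta)}\le R^{\alpha_0-1}$. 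The restriction $1/(12n+1)$ comes only from making the ABP term $(Pr)^{-1/n}/r$ at most $R^{\alpha_0}$. Taylor expansion of $v_R$ and a dyadic comparison then give \eqref{eqn:boundary-gradient-holder}--\eqref{eqn2.27}. There is no iteration.

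\emph{Gluing step.} A rescaled Theorem~\ref{thm1.1} on $B_{d/2}(x)$ sees only $\int_{B_{d/2}(x)}(\Delta u)^{1+\varepsilon}$. After subtracting the boundary plane and normalizing the amplitude $d^{1+\alpha_0}$ to one, the only available bound on the rescaled Hessian in $L^{1+\varepsilon}$ is $Cd^{1-\alpha_0-n/(1+\varepsilon)}$. This is unbounded whenever $(1-\alpha_0)(1+\varepsilon)<n$. Passing to a smaller exponent by H\"older leaves that power unchanged. Moreover, in the paper $\alpha\le\varepsilon/(n+\varepsilon)$ enters only to make the powers of $h$ after \eqref{eqn3.21} nonpositive, not in any such absorption.

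No bookkeeping can fix this. On $B_d(x)$, take $u=\ell+d^{1+\alpha_0}[\phi(z_1)+\epsilon_0|z|^2]$ with $z=(\cdot-x)/d$, $\phi''$ a bump of height $H$ and width $1/H$, and $\epsilon_0$ tiny. This satisfies all the local information you feed into Theorem~\ref{thm1.1}: admissibility, density in $(0,\Lambda]$ (Theorem~\ref{thm1.1} uses only $f\le\Lambda$), $|u-\ell|\le Cd^{1+\alpha_0}$, and a fixed bound on $\int_{B_d(x)}(\Delta u)^{1+\varepsilon}$ once $H^{\varepsilon}\sim d^{(1-\alpha_0)(1+\varepsilon)-n}$. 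Yet $[Du]_{C^{\alpha}(B_{d/2}(x))}\gtrsim d^{\alpha_0-\alpha}H^{\alpha}\to\infty$ for $\alpha$ as in \eqref{eqn1.7}, under the same condition.

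The missing idea is to run the second-difference argument of Section~\ref{sec3} globally:
\begin{itemize}
\item $K$ in \eqref{eqn3.7} is an unweighted maximum over all of $\Omega$, with $h\le\min\{h_*,d(x)/4\}$;
\item the transverse cylinders have radius $r=\min\{1/64,Mh/\sqrt{1+\Lambda}\}$, independent of $d(x_*)$, and may cross $\partial\Omega$;
\item \eqref{eqn2.27} is used only to show $w=0$ on $\{d\le4h\}$, see \eqref{eqn3.12}.
\end{itemize}
All contact points are then interior, and \eqref{eqn3.21} uses the global integral $\int_\Omega(\Delta u)^{1+\varepsilon}$. This gives $K^{1+\varepsilon}\le C$ and \eqref{eqn3.22} with a constant independent of the ball radius. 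Only after that is the standard gluing you describe available.
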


For the Monge-Amp\`ere equation, Savin-Zhang \cite{SZ} proved global $C^{1,\alpha}$ estimates under positive upper and lower density bounds on uniformly convex $C^{2,\beta}$ domains with $C^{2,\beta}$ boundary values.
Under the same assumptions, including positive upper and lower density bounds, Ji-Lyu \cite{JiLyuMA} obtained global $W^{2,1+\varepsilon}$ estimates without quadratic separation.
Under the same domain and boundary value assumptions and positive upper and lower density bounds, Wu \cite{WuThesis} obtained global $C^{1,\alpha}$ estimates for convex solutions of the $k$-Hessian equation.
For the $2$-Hessian equation, the author's global $W^{2,p}$ estimates for every finite $p>1$ \cite{LyuGlobal} assume densities continuous up to the boundary and bounded above and below by positive constants on strictly mean convex $C^{3}$ domains with $C^{3}$ boundary values.
The global $W^{2,1+\varepsilon}$ estimate below uses only positive upper and lower density bounds and $C^{2,\beta}$ control of the boundary and boundary values, on the full $2$-admissible branch in strictly mean convex domains.

\begin{theorem}\label{thm1.3}
Let $\partial\Omega\in C^{2,\beta}$ be strictly mean convex and $\varphi\in C^{2,\beta}(\partial\Omega)$ for some $0<\beta<1$.
Let $u\in C^{4}(\overline{\Omega})$ be a $2$-admissible solution of \eqref{eqn1.2} with \eqref{eqn1.4} holding in $\Omega$.
Then there exists $\varepsilon>0$ such that
\begin{equation}\label{eqn1.8}
\|u\|_{W^{2,1+\varepsilon}(\Omega)}\leq C,
\end{equation}
where $\varepsilon$ depends only on $n$, $\lambda$, $\Lambda$, and $\beta$, and $C$ depends only on $n$, $\lambda$, $\Lambda$, $\beta$, $\Omega$, and $\|\varphi\|_{C^{2,\beta}(\partial\Omega)}$.
\end{theorem}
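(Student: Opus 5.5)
The plan is to reduce the global $W^{2,1+\varepsilon}$ estimate to three ingredients: a global Lipschitz bound, a boundary layer estimate on $D^{2}u$, and an interior $W^{2,1+\varepsilon}$ estimate with the correct scaling. These are then combined by a covering of $\Omega$ by Whitney-type balls.

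\textbf{Step 1: barriers and a weak integral bound.} Using strict mean convexity \eqref{eqn1.5} and $\varphi\in C^{2,\beta}$, we build sub- and supersolution barriers as in Caffarelli-Nirenberg-Spruck \cite{CNS}. The supersolution is the harmonic extension of $\varphi$, since $\Delta u>0$ makes $u$ subharmonic. The subsolution is $\underline u=\tilde\varphi+A(d^{2}-\mu d)$-type in the collar, where $d$ is the distance to $\partial\Omega$. This gives $|u|\le C$ and $|Du|\le C$ on $\overline\Omega$, and $|u-\varphi|\le Cd$. Admissibility gives $\Delta u>0$, so $D^{2}u\ge -(n-1)\Delta u\, I$ and hence $|D^{2}u|\le C\Delta u$. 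Integrating $\Delta u$ over $\Omega$ gives $\|D^{2}u\|_{L^{1}(\Omega)}\le \int_{\partial\Omega}|Du|\le C$. This settles the $L^{1}$ level.

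\textbf{Step 2: interior scale-invariant estimate.} On a ball $B_{r}(x_0)$ with $r=d(x_0)/4$, we rescale $v(y)=r^{-2}(u(x_0+ry)-\ell(y))$, with $\ell$ the affine part. Then $v$ solves \eqref{eqn1.1} with the same bounds \eqref{eqn1.4}, and $\|v\|_{L^\infty(B_2)}\le C r^{-1}\mathrm{osc}$. Since $u$ is only Lipschitz, the rescaled oscillation is of order $r^{-1}$, which is not bounded. We therefore use instead the interior $W^{2,1+\varepsilon}$ estimate mentioned in the abstract, a modification of the De Philippis-Figalli-Savin / Schmidt argument adapted to $\sigma_2$. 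It has the form $\fint_{B_r}|D^{2}u|^{1+\varepsilon}\le C\big(\fint_{B_{2r}}|D^{2}u|\big)^{1+\varepsilon}$, a reverse Hölder inequality. Its constants depend on $n,\lambda,\Lambda$ only, which is what makes $\varepsilon$ depend only on $n,\lambda,\Lambda,\beta$. The proof I would follow is the standard one. First, $L^{\varepsilon_0}$ decay of the level sets $\{|D^{2}u|>t\}$ via ABP/paraboloid touching. Second, a Calderón-Zygmund covering, using concavity of $\sigma_2^{1/2}$ so that the linearized operator controls second derivatives from one side.

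\textbf{Step 3: boundary Hessian control.} This is the main obstacle. I would prove the boundary analogue of the reverse Hölder inequality: for $x_0\in\partial\Omega$ and small $r$, $\fint_{B_r(x_0)\cap\Omega}|D^{2}u|^{1+\varepsilon}\le C$. The first attempt is to show that $u$ is pointwise $C^{1,\gamma}$-close to $\varphi$ near the boundary, i.e. that $u-\ell_{x_0}$ is bounded by $C|x-x_0|^{1+\gamma}$ with $\gamma$ tied to $\beta$. The tools are the tangential second derivative identity $u_{\tau\tau}=\varphi_{\tau\tau}-u_\nu\kappa_\tau$ and barriers built from $C^{2,\beta}$ data. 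Rescaling boundary balls $B_r(x_0)\cap\Omega$ to unit size then gives a flattening domain and uniformly bounded normalized solutions, to which the interior estimate of Step 2 applies away from the flattened boundary. Summing over a Whitney decomposition, each cube $Q$ of size $\sim d$ contributes $\int_Q|D^2u|^{1+\varepsilon}\le C d^{n}\big(d^{-1}\fint_{\tilde Q}|Du-\ell|\big)^{1+\varepsilon}\lesssim d^{n+(\gamma-1)(1+\varepsilon)}$. Summing over the $\sim d^{1-n}$ cubes at each dyadic layer gives $\sum_d d^{\gamma-\varepsilon(1-\gamma)}$. This converges once $\varepsilon<\gamma/(1-\gamma)$, which yields \eqref{eqn1.8}.

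The delicate point is producing the boundary $C^{1,\gamma}$ separation with only $C^{2,\beta}$ data and no uniform convexity. The natural remedy is to use mean convexity to control $u_{\nu}$ via the barrier. If pointwise $C^{1,\gamma}$ fails, a fallback is to obtain $|D^{2}u|\le C d^{-1+\gamma}$ in an averaged sense, via a doubling argument on the normal derivative, which still suffices for the summation above.
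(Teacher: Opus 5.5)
\textbf{The main gap is the reverse H\"older inequality of Step~2, on which your Whitney summation in Step~3 rests.} To get $\int_Q|D^2u|^{1+\varepsilon}\lesssim d^{\,n+(\gamma-1)(1+\varepsilon)}$ on a cube at distance $d$ from $\partial\Omega$, you need an interior estimate homogeneous of degree $1+\varepsilon$ in $D^2u$, with constants depending only on $n,\lambda,\Lambda$.

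Since $\sigma_2[D^2(u/t)]=t^{-2}\sigma_2[D^2u]$, such an estimate amounts to a bound holding uniformly for densities in $[\lambda/t^2,\Lambda/t^2]$, $t\geq1$. These are equations degenerating to $\sigma_2=0$. Your near-boundary rescalings have amplitude of order $d^{\gamma-1}$, so after normalization their densities are of order $d^{2(1-\gamma)}\to0$, which is exactly this regime.

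Nothing in your sketch provides such a bound. The De Philippis--Figalli--Savin and Schmidt arguments rest on Caffarelli's sections and the affine invariance of Monge--Amp\`ere. For $\sigma_2$ the linearized operator is not uniformly elliptic, so paraboloid touching and a Calder\'on--Zygmund covering do not by themselves give level-set decay. What is actually available (Theorem~\ref{thmA.1}) has a different form. It is a weighted inequality for $X=(W-u-\delta)_+^a\Delta u$, obtained from an auxiliary Dirichlet problem whose density is cut down to $\lambda$ where a weighted trace exceeds a threshold $P$. Its constant $C_A$ depends on the amplitude in an untracked way.

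\textbf{How the paper avoids sharp scaling.} It first proves $\int_{\{d>t\}}(\Delta u)^{1+\varepsilon_0}\leq Ct^{-N}$ with a large explicit $N$. This comes from applying Theorem~\ref{thmA.1}\textup{(i)} once, with a global comparison function $W_h$: density $\lambda/4$ on a smoothed outer domain $\Omega_h$, $\|D^2W_h\|_\infty\leq Ch^{-(2-\beta)}$, and gap $W_h-u-\delta_h\geq ch^{2-\beta}t$ on $\{d>t\}$ for $h\sim t^{1/(2\beta)}$. It then combines this with the layer bound $\int_{\{0<d<t\}}\Delta u\leq Ct^{\alpha_0}$ through the level-set interpolation
\[
\int_{\{\Delta u>K\}}\Delta u\leq C\bigl(t^{\alpha_0}+K^{-\varepsilon_0}t^{-N}\bigr),\qquad t=K^{-\varepsilon_0/(\alpha_0+N)}.
\]
This absorbs any polynomial interior loss, at the price of $\varepsilon=\alpha_0\varepsilon_0/(2(\alpha_0+N))$, far smaller than your $\gamma/(1-\gamma)$.

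\textbf{The second gap is the boundary $C^{1,\gamma}$ separation, which you rightly flag but do not supply.} Barriers from $C^{2,\beta}$ data and the tangential identity give $|u-\varphi|\leq Cd$ and a bound for $Du$ on $\partial\Omega$. They give no H\"older modulus for $u_\nu$ along $\partial\Omega$ when the density is merely pinched between $\lambda$ and $\Lambda$, and the ``doubling argument'' fallback is not specified.

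The paper obtains \eqref{eqn2.27}, with $\alpha_0=\min\{\beta/2,1/(12n+1)\}$, by approximating $u$ in the layer $\{d\leq R\}$ by solutions $v_R$ of the cut-off auxiliary problem on $\Omega_{R^{1/2}}$. Their Hessians are bounded by $CR^{\alpha_0-1}$ independently of derivatives of the density; strict mean convexity enters through the double-normal estimate. An ABP comparison in the collar then gives $|u-v_R|\leq CR^{1+\alpha_0}$. The same approximants produce the layer estimate \eqref{eqn2.37} that the interpolation needs.

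Finally, Step~1 overclaims. Barriers bound $Du$ only on $\partial\Omega$; an interior gradient bound via the maximum principle would require derivatives of $f$.
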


For the $2$-Hessian equation, the author \cite{LyuInterior} proved interior $W^{2,1}$ estimates under positive upper and lower density bounds and, when the density is also continuous, $W^{2,p}$ estimates for every finite $p>1$.
A slight modification of the weighted comparison argument in \cite[Section~3]{LyuInterior}, based on the auxiliary Dirichlet construction of Chen-Zhou-Zhu \cite{CZZ}, gives an interior $W^{2,1+\varepsilon}$ estimate for some $\varepsilon>0$ under positive upper and lower density bounds alone.
We prove this interior estimate in the appendix.
Combining it with Theorem~\ref{thm1.1} gives the following corollary.

\Needspace{11\baselineskip}
\begin{corollary}\label{thm1.4}
Let $u\in C^{4}(B_{2})$ be a bounded $2$-admissible solution of \eqref{eqn1.1} with \eqref{eqn1.4} holding in $B_{2}$.
Then there exists $\alpha\in(0,1)$ such that
\begin{equation}\label{eqn1.9}
\|u\|_{C^{1,\alpha}(B_{1/2})}\leq C,
\end{equation}
where $\alpha$ depends only on $n$, $\lambda$, and $\Lambda$, and $C$ depends only on $n$, $\lambda$, $\Lambda$, and $\|u\|_{L^{\infty}(B_{2})}$.
\end{corollary}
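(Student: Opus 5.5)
The plan is to combine the interior $W^{2,1+\varepsilon}$ estimate proved in the appendix with Theorem~\ref{thm1.1}, using scaling and covering to match the domains. First I fix the precise form of the appendix estimate. It should say that there exist $\varepsilon_{0}>0$, depending only on $n,\lambda,\Lambda$, and $C_{0}$, depending only on $n,\lambda,\Lambda,\|u\|_{L^{\infty}(B_{2})}$, such that
\[
\|D^{2}u\|_{L^{1+\varepsilon_{0}}(B_{r})}\leq C_{0}
\]
for a fixed radius $r<2$, say $r=1$. This comes from the weighted comparison argument of \cite[Section~3]{LyuInterior} together with the Chen--Zhou--Zhu auxiliary Dirichlet problem \cite{CZZ}.

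Theorem~\ref{thm1.1} needs its $W^{2,1+\varepsilon}$ hypothesis on $B_{2}$. I apply it to a rescaled function instead. For a center $x_{0}$ and radius $\rho$ with $B_{\rho}(x_{0})\subset B_{1}$, set
\[
v(y):=\rho^{-2}u(x_{0}+\rho y/2),\qquad y\in B_{2}.
\]
Then $\sigma_{2}[D^{2}v]=\tfrac{1}{16}f(x_{0}+\rho y/2)$, so $v$ is still $2$-admissible with density in $[\lambda/16,\Lambda/16]$. The $L^{\infty}$ norm scales by $\rho^{-2}$. The $W^{2,1+\varepsilon_{0}}(B_{2})$ norm of $v$ is controlled by $\rho$-dependent powers of $\|u\|_{W^{2,1+\varepsilon_{0}}(B_{1})}$ and $\|u\|_{L^{\infty}}$. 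The first derivatives enter through interpolation between $L^{\infty}$ and second derivatives, or through the standard gradient bound for $2$-admissible (hence subharmonic) functions.

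Fix $\rho=1/2$ and apply Theorem~\ref{thm1.1} with $\varepsilon=\varepsilon_{0}$. This gives a $C^{1,\alpha}$ bound on $B_{1/2}$ for $v$, that is, on $B_{1/8}(x_{0})$ for $u$, with
\[
\alpha=\frac{\varepsilon_{0}}{n+\varepsilon_{0}},
\]
which depends only on $n,\lambda,\Lambda$. I cover $B_{1/2}$ by finitely many balls $B_{1/8}(x_{0})$ with $x_{0}\in B_{1/2}$; each satisfies $B_{1/2}(x_{0})\subset B_{1}$. The H\"older seminorm across different balls is handled in the standard way: for $|x-y|\geq 1/16$, bound the difference quotient by $2\|Du\|_{L^{\infty}}\cdot 16^{\alpha}$, which the local bounds already control. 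All constants depend only on $n,\lambda,\Lambda,\|u\|_{L^{\infty}(B_{2})}$.

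The main obstacle is the appendix estimate itself, which the statement assumes but which I would still need to verify is available in exactly the scale-invariant form used above. The step I expect to need care is the modification of \cite{LyuInterior}. There, only $W^{2,1}$ is obtained under bounded densities, through an estimate of the form
\[
\int_{\{\Delta u>t\}}\Delta u\leq C t^{-\delta}
\]
from the weighted comparison. To upgrade to $L^{1+\varepsilon}$, I would get a power decay of the distribution function $|\{\Delta u>t\}|\leq Ct^{-1-\delta}$ with $\delta>0$. I would aim for this through an iteration in the style of De Philippis--Figalli--Savin: on a positive fraction of each level set the Hessian drops by a fixed factor, which yields geometric decay. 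Since $|D^{2}u|\leq C\Delta u$ on $\Gamma_{2}$, $L^{1+\varepsilon}$ integrability of $\Delta u$ then gives the required Hessian bound.
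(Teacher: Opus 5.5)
Your proposal is correct and is essentially the paper's proof: Theorem~\ref{thmA.1}\textup{(ii)} gives $\|u\|_{W^{2,1+\varepsilon_0}(B_1)}\leq C$ with $\varepsilon_0=\varepsilon_0(n,\lambda,\Lambda)$, and Theorem~\ref{thm1.1} with $\varepsilon=\varepsilon_0$ then yields \eqref{eqn1.9} with $\alpha=\varepsilon_0/(n+\varepsilon_0)$. The only difference is how the mismatch between $B_1$ and $B_2$ is handled. The paper observes that the proof of Theorem~\ref{thm1.1} uses the Hessian integral only on $B_1$. Your rescaling $v(y)=\rho^{-2}u(x_0+\rho y/2)$, centered at each $x_0\in B_{1/2}$, instead uses Theorem~\ref{thm1.1} as a black box, which works equally well; the appendix bound is the full $W^{2,1+\varepsilon_0}(B_1)$ norm, so no separate gradient estimate is needed.

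Your closing paragraph is unnecessary, and as written it is internally inconsistent. The paper proves the appendix estimate by a Gehring-type tail inequality $\int_{\{X>P\}}X\leq CP|\{X>cP\}|$ for the weighted quantity $X=(W-u-\delta)_+^a\Delta u$, not by a De Philippis--Figalli--Savin iteration. Moreover, a bound $\int_{\{\Delta u>t\}}\Delta u\leq Ct^{-\delta}$ with $\delta>0$ would already give $W^{2,1+\varepsilon}$ by the layer-cake formula. Your proposed ``upgrade'' $|\{\Delta u>t\}|\leq Ct^{-1-\delta}$ follows from that bound by Chebyshev, so it is weaker, not stronger.
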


Combining Theorems~\ref{thm1.2} and \ref{thm1.3} gives the following corollary.

\Needspace{10\baselineskip}
\begin{corollary}\label{thm1.5}
Let $\Omega$, $\varphi$, and $u$ satisfy the assumptions of Theorem~\ref{thm1.3}.
Then there exists $\alpha\in(0,1)$ such that
\begin{equation}\label{eqn1.10}
\|u\|_{C^{1,\alpha}(\overline{\Omega})}\leq C,
\end{equation}
where $\alpha$ depends only on $n$, $\lambda$, $\Lambda$, and $\beta$, and $C$ depends only on $n$, $\lambda$, $\Lambda$, $\beta$, $\Omega$, and $\|\varphi\|_{C^{2,\beta}(\partial\Omega)}$.
\end{corollary}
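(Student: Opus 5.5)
The corollary is a direct composition of Theorem~\ref{thm1.3} and Theorem~\ref{thm1.2}, so the plan is to chain them with the right choice of $\varepsilon$ and then track the dependencies.

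\emph{Step 1.} Apply Theorem~\ref{thm1.3} to $u$. Its hypotheses are exactly those assumed here. This gives an exponent $\varepsilon_{0}>0$ depending only on $n$, $\lambda$, $\Lambda$, $\beta$, together with a bound $\|u\|_{W^{2,1+\varepsilon_{0}}(\Omega)}\leq C_{0}$. The constant $C_{0}$ depends only on $n$, $\lambda$, $\Lambda$, $\beta$, $\Omega$, and $\|\varphi\|_{C^{2,\beta}(\partial\Omega)}$.

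\emph{Step 2.} Bound $\|u\|_{L^{\infty}(\Omega)}$ by the same data, since Theorem~\ref{thm1.2} lists it as a dependence. One route is to note that it is already controlled by the $W^{2,1+\varepsilon_{0}}$ norm, which contains the $L^{1+\varepsilon_{0}}$ norm of $u$. A cleaner route is the comparison principle:
\begin{itemize}
\item Since $\Delta u=\sigma_{1}>0$, $u$ is subharmonic, so $u\leq\max_{\partial\Omega}\varphi$.
\item For the lower bound, use $w=-M+A|x-x_{0}|^{2}/2$ with $x_{0}\in\Omega$. Then $\sigma_{2}[D^{2}w]=\binom{n}{2}A^{2}\geq\Lambda$ for $A$ chosen from $n$ and $\Lambda$, and $w$ is $2$-admissible. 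Taking $M$ large in terms of $\operatorname{diam}\Omega$ and $\|\varphi\|_{L^{\infty}}$ gives $w\leq\varphi$ on $\partial\Omega$.
\item The comparison principle for $\sigma_{2}$ on admissible functions then gives $u\geq w$.
\end{itemize}
Hence $\|u\|_{L^{\infty}(\Omega)}$ depends only on $n$, $\Lambda$, $\Omega$, and $\|\varphi\|_{C^{0}}$.

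\emph{Step 3.} Apply Theorem~\ref{thm1.2} with $\varepsilon=\varepsilon_{0}$. This yields the bound \eqref{eqn1.10} with
\[
\alpha=\min\left\{\frac{\varepsilon_{0}}{n+\varepsilon_{0}},\frac{\beta}{2},\frac{1}{12n+1}\right\}\in(0,1).
\]
This $\alpha$ depends only on $n$, $\lambda$, $\Lambda$, $\beta$. The constant $C$ depends on $n$, $\lambda$, $\Lambda$, $\varepsilon_{0}$, $\beta$, $\Omega$, $\|\varphi\|_{C^{2,\beta}}$, $\|u\|_{L^{\infty}}$, and $\|u\|_{W^{2,1+\varepsilon_{0}}}$. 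By Steps 1--2, all of these reduce to the stated quantities.

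\emph{Main obstacle.} The only point needing care is that $C$ in Theorem~\ref{thm1.2} depends monotonically, or at least only through upper bounds, on the norms $\|u\|_{L^{\infty}}$ and $\|u\|_{W^{2,1+\varepsilon}}$. That is implicit in how such a priori estimates are stated, so replacing those norms by their bounds is legitimate. Otherwise the argument is routine.
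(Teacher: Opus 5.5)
Your proposal is correct and follows the paper's proof: Theorem~\ref{thm1.3} gives $\varepsilon$ and the $W^{2,1+\varepsilon}$ bound, a barrier comparison gives the $L^\infty$ bound, and Theorem~\ref{thm1.2} with this $\varepsilon$ finishes; for the $L^\infty$ step the paper cites \eqref{eqn2.4}, and your subharmonicity-plus-quadratic-barrier argument works equally well. Drop the aside that the $W^{2,1+\varepsilon_0}$ norm already controls $\|u\|_{L^\infty}$, since for $n\geq3$ and small $\varepsilon_0$ the embedding $W^{2,1+\varepsilon_0}\subset L^\infty$ fails (it needs $1+\varepsilon_0>n/2$), so only the comparison route is valid.
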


Savin-Zhang \cite{SZ} showed that $C^{2}$ boundary values do not suffice for global gradient H\"older regularity, even on a ball.
Ji-Lyu \cite{JiLyuMA} construct density-one solutions with either $C^{2,\mathrm{Dini}}$ boundary values on a smooth uniformly convex domain or a uniformly convex $C^{2,\mathrm{Dini}}$ domain with smooth boundary values, which fail to belong to $W^{2,p}$ for every $p>1$.
The boundary derivative oscillations in these constructions also exclude global $C^{1,\delta}$ regularity for every $0<\delta<1$.

The rest of the paper is organized as follows.
Section~2 contains the proof of Theorem~\ref{thm1.3}.
Section~3 contains the proofs of Theorems~\ref{thm1.1} and~\ref{thm1.2}.
The appendix establishes the weighted estimates used in Section~2 and the interior $W^{2,1+\varepsilon}$ estimate used to prove Corollary~\ref{thm1.4}.

\section{Global \texorpdfstring{$W^{2,1+\varepsilon}$}{W2,1+epsilon} estimates}\label{sec2}

Throughout this section, $\Omega$ is a bounded strictly mean convex $C^{2,\beta}$ domain and $\varphi\in C^{2,\beta}(\partial\Omega)$, where $0<\beta<1$. Fix a $C^{2,\beta}$ extension $\Phi$ to a neighborhood of $\overline{\Omega}$ such that
\[
\Phi|_{\partial\Omega}=\varphi,\quad
\|\Phi\|_{C^{2,\beta}}\leq C\|\varphi\|_{C^{2,\beta}(\partial\Omega)}.
\]
Unless otherwise specified, $c,C>0$ may vary from line to line and depend only on $n$, $\lambda$, $\Lambda$, $\beta$, $\Omega$, and $\|\varphi\|_{C^{2,\beta}(\partial\Omega)}$. For a differentiable function $v$ and the inward unit normal $\nu$, write $v_{\nu}:=Dv\cdot\nu$.
We denote the signed distance to $\partial\Omega$ by
\[
d(x):=\begin{cases}
\operatorname{dist}(x,\partial\Omega),&x\in\Omega,\\
-\operatorname{dist}(x,\partial\Omega),&x\notin\Omega.
\end{cases}
\]
Thus $Dd=\nu$ on $\partial\Omega$.

For a symmetric matrix $H$, set
\[
\sigma_{2}^{ij}[H]:=\frac{\partial\sigma_{2}[H]}{\partial H_{ij}}
=(\operatorname{tr}H)\delta_{ij}-H_{ij}.
\]
If $\lambda(H)\in\Gamma_{2}$, this matrix is positive definite and
\begin{equation}\label{eqn2.1}
\det\big(\sigma_{2}^{ij}[H]\big)\geq\big(\sigma_{2}[H]\big)^{n/2}.
\end{equation}
Indeed, write $\lambda_1,\ldots,\lambda_n$ for the eigenvalues of $H$. For $i\ne j$,
\[
(\operatorname{tr}H-\lambda_i)(\operatorname{tr}H-\lambda_j)-\sigma_2[H]=\frac12\left[\left(\sum_{k\ne i,j}\lambda_k\right)^2
+\sum_{k\ne i,j}\lambda_k^2\right]\geq0.
\]
Multiplication over all pairs proves \eqref{eqn2.1}, since $\operatorname{tr}H-\lambda_i>0$.

At $x_0\in\partial\Omega$, translate and rotate coordinates so that $x_0=0$ and $e_n=\nu(0)$. For some $r_0>0$ and $\rho\in C^{2,\beta}$, write the boundary locally as
\[
x_n=\rho(x'),\quad |x'|<r_0,
\]
with $\rho(0)=0$ and $D\rho(0)=0$. Locally, the domain lies above the graph, and
\[
\sum_{i<n}\rho_{ii}(0)\geq h_0
\]
by \eqref{eqn1.5}. The radius $r_0$ and the $C^{2,\beta}$ bounds are uniform in $x_0$. In local boundary calculations, subtracting the same affine function from $v$ and $\Phi$, we assume $\Phi(0)=D\Phi(0)=0$.

\subsection{The auxiliary Dirichlet problem}\label{subsec:auxiliary-dirichlet}

There is a defining function $\psi$ on a neighborhood of $\overline{\Omega}$ and fixed constants $\theta,c>0$ such that
\begin{equation}\label{eqn2.2}
\begin{gathered}
\psi<0\quad\text{in }\Omega,\quad \psi=0\quad\text{on }\partial\Omega,\quad
\lambda(D^{2}\psi-\theta I)\in\Gamma_{2},\quad \|\psi\|_{C^{2,\beta}}\leq C,\\
cd(x)\leq-\psi(x)
\leq Cd(x)\quad(x\in\Omega).
\end{gathered}
\end{equation}
Following \cite[Section~2.1]{LyuGlobal}, for $\mu>0$ put $\psi_{0}(x):=-d(x)+\mu d(x)^{2}$ in a fixed boundary collar, where $d\in C^{2,\beta}$. At the center of a boundary graph,
\[
D^{2}\psi_{0}=\begin{pmatrix}D^{2}\rho&0\\0&2\mu\end{pmatrix},\quad
\sigma_{2}\big[D^{2}\psi_{0}\big]
=\sigma_{2}\big[D^{2}\rho\big]+2\mu\Delta\rho.
\]
Strict mean convexity gives a fixed $\mu$ for which $\lambda(D^{2}\psi_{0})$ stays in a compact subset of $\Gamma_{2}$ on a fixed collar. Choose $\ell>0$ so that $\psi_{0}<-3\ell/4$ at the inner edge, and a strictly convex quadratic with values between $-\ell/2$ and $-\ell/4$. A regularized maximum agrees with $\psi_{0}$ near $\partial\Omega$ and with the quadratic at the inner edge, extending $\psi_0$ into $\Omega$. Its Hessian is a convex combination of the two Hessians plus a nonnegative definite matrix, so strict admissibility is preserved. The construction gives \eqref{eqn2.2} and is smooth when $\Omega$ is smooth.

For a $2$-admissible function $v$, the linearization of $\sigma_2^{1/2}$ at $v$ is
\[
\mathcal{L}_v:=\sum_{i,j}\frac{\sigma_{2}^{ij}\big[D^{2}v\big]}{2\sigma_{2}^{1/2}\big[D^{2}v\big]}\partial_{ij}.
\]
Concavity and homogeneity give
\begin{equation}\label{eqn2.3}
\mathcal{L}_v\psi\geq\frac{(n-1)\theta\Delta v}{2\sigma_{2}^{1/2}\big[D^{2}v\big]},\quad
\big(\sigma_{2}^{ij}\big[D^{2}\psi\big]\big)\geq(n-1)\theta I.
\end{equation}
For a fixed sufficiently large $A$, the function $\Phi+A\psi$ is $2$-admissible with square-root density at least $\sqrt{\Lambda}$, while $\Phi-A\psi$ is superharmonic. Hence comparison gives
\begin{equation}\label{eqn2.4}
|v-\Phi|\leq-C\psi,\quad
\|v\|_{L^{\infty}(\Omega)}+\|Dv\|_{L^{\infty}(\partial\Omega)}\leq C
\end{equation}
whenever $v=\Phi$ on $\partial\Omega$ and $\lambda\leq\sigma_{2}[D^{2}v]\leq\Lambda$. In particular,
\begin{equation}\label{eqn2.5}
\int_{\Omega}\Delta v\,dx\leq C,\quad
\int_{\Omega}|Dv|^{2}\,dx
=-\int_{\partial\Omega}vv_{\nu}\,dS-\int_{\Omega}v\Delta v\,dx\leq C.
\end{equation}

For the remainder of this subsection, assume that $\Omega$ and $\Phi$ are smooth, with the uniform $C^{2,\beta}$ bounds above.

Fix $P\geq1$ and $g\in C^2(\overline\Omega)$ with $\sqrt\lambda\leq g\leq\sqrt\Lambda$. Choose a smooth nondecreasing function $\chi:\mathbb R\to[0,1]$ equal to zero on $(-\infty,0]$ and one on $[1,\infty)$. Fix $\tau>1$ sufficiently large depending only on $\lambda,\Lambda$, as required in \eqref{eqn2.9}, and set
\[
\eta_*(R):=\begin{cases}
\chi\left(\frac{\log R}{\log\tau}\right),&R>1,\\
0,&R\leq1.
\end{cases}
\]
Consider the auxiliary problem
\begin{equation}\label{eqn2.7}
\begin{cases}
\displaystyle \sigma_{2}\big[D^{2}v\big]
=g^2-(g^2-\lambda)\eta_{*}\left(\frac{\Delta v}{-P\psi}\right)
&\text{in }\Omega,\\
v=\Phi&\text{on }\partial\Omega.
\end{cases}
\end{equation}
The right-hand side is defined to equal $\lambda$ on $\partial\Omega$. Along any solution, $(\Delta v)^2=2\sigma_2[D^2v]+|D^2v|^2\geq2\lambda$, so the equation has constant density in the collar $0<-\psi<\sqrt\lambda/(\tau P)$.

For $S>0$ and a symmetric matrix $H$ with $\operatorname{tr}H>0$, set
\[
B(x,S):=g(x)^2-(g(x)^2-\lambda)\eta_{*}\left(\frac{S}{-P\psi(x)}\right),\quad
F(x,H):=\frac{\sigma_{2}[H]-B(x,\operatorname{tr}H)}{\operatorname{tr}H}.
\]
At fixed $x$, $B_S\leq0$ and $\sup_{R>0}R^{2}(\eta_{*}''(R))_{+}\leq C/(\log\tau)^2$. The choice of $\tau$ gives
\begin{equation}\label{eqn2.9}
S^{2}B_{SS}-2SB_S+2B
\geq2\lambda-(\Lambda-\lambda)\sup_{R>0}R^2(\eta_*''(R))_+
\geq\lambda.
\end{equation}
Writing $S=\operatorname{tr}H$, we have
\[
F(x,H)=\frac S2-\frac{|H|^{2}}{2S}-\frac{B(x,S)}S.
\]
Since $|H|^2/S$ is jointly convex for $S>0$ and $B/S$ is convex by \eqref{eqn2.9}, $F$ is concave in $H$ on $\{\operatorname{tr}H>0\}$. With $F^{ij}:=\partial F/\partial H_{ij}$, we have, for every unit vector $e$,
\begin{equation}\label{eqn2.10}
\sum_{i,j}F^{ij}e_i e_j
=\frac12\sum_{i,j}\left(\frac{H_{ij}}S-e_i e_j\right)^{2}
+\frac{B}{S^{2}}-\frac{B_S}S
\geq\frac{\lambda}{S^{2}}.
\end{equation}
Since $B=\lambda$ whenever $0<-\psi<S/(\tau P)$, this definition extends across $\partial\Omega$ by setting $B=\lambda$ for $\psi\geq0$.

For the a priori estimates, fix $0<\vartheta<1$ and a $2$-admissible solution $v\in C^{3,\vartheta}(\overline{\Omega})$ of \eqref{eqn2.7}. Set
\[
K_j:=1+\max_{\text{boundary charts}}\|\rho\|_{C^j}+\|\Phi\|_{C^j},\quad j=3,4.
\]
The constants are independent of derivatives of $g$ and of $K_3,K_4$ except for the displayed factors. The solution is smooth on the open set where its density is constant.

For constant comparison density, the following gradient bound is proved in \cite[(2.15)]{LyuGlobal}.

\begin{lemma}\label{lem:aux-gradient}
Every solution of \eqref{eqn2.7} satisfies
\begin{equation}\label{eqn2.12}
\|Dv\|_\infty^{2}\leq CP.
\end{equation}
\end{lemma}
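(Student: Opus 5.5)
Since $v$ is bounded by \eqref{eqn2.4} and $|Dv|\leq C$ on $\partial\Omega$, we only need an interior maximum principle for a gradient quantity. The plan is to run the classical Bernstein-type argument for $\sigma_2$ with the concave operator $F$ instead of $\sigma_2^{1/2}$. The factor $P$ enters only through the derivatives of the right-hand side $B(x,\Delta v)$ in $x$, because $\eta_*$ is evaluated at $\Delta v/(-P\psi)$. We consider
\[
W:=\log|Dv|^{2}+\gamma(v-\inf v)^{2}\quad\text{or}\quad W:=\tfrac12|Dv|^2+Mv,
\]
and let $x_0$ be an interior maximum point. Rewrite the equation as $F(x,D^2v)=0$ and differentiate in $e_k$: $F^{ij}v_{ijk}+F_{x_k}=0$, where $F_{x_k}=-B_{x_k}/\operatorname{tr}H$. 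Since $B_{x_k}$ involves $g_{x_k}$ and $\eta_*'\cdot\Delta v\,\psi_k/(P\psi^2)$, the requirement that constants not depend on derivatives of $g$ is delicate. The cleaner route is therefore to follow \cite[(2.15)]{LyuGlobal}: in the region where the density is the constant $\lambda$, and in particular near $\partial\Omega$, the equation is $\sigma_2=\lambda$ and $v$ is smooth there. The gradient bound is needed only up to the scale set by $P$.

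Concretely, I would split $\Omega$ into two parts. The first is $\{\Delta v\leq -\tau P\psi\}$, where $|D^2v|\leq\Delta v\leq CP|\psi|\leq CP$. The second is its complement, where $\sigma_2[D^2v]$ need not be constant but $\Delta v> -\tau P\psi$. The simplest plan avoids the equation almost entirely and uses an interpolation: $\Delta v\geq0$, $|v|\leq C$, and $|Dv|\leq C$ on $\partial\Omega$. For a $2$-admissible (hence subharmonic) $v$, at an interior maximum $x_0$ of $|Dv|^2$ in direction $e=Dv(x_0)/|Dv(x_0)|$, take the segment from $x_0$ along $-e$ or $+e$ to the boundary and integrate $v_{ee}$. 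The needed bound is $|D^2v|\lesssim\Delta v$ (since $\lambda(D^2v)\in\Gamma_2$ gives $|D^2v|\leq\Delta v$), together with an upper bound on $\Delta v$ in terms of $P$. So the real step is a global bound $\Delta v\leq CP$. I would derive it by a maximum principle for $\Delta v/(-\psi)$, or for $\Delta v+A|x|^2$ compared to $P$. Where $\Delta v\geq-\tau P\psi$, the density equals $\lambda$ in the $\eta_*=1$ region, so concavity of $\sigma_2^{1/2}$ gives $\mathcal L_v(\Delta v)\geq0$, and the boundary double-normal estimate from \cite{LyuGlobal} handles the boundary.

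If a direct $\Delta v\leq CP$ is unavailable, I would instead take $W=|Dv|^2e^{Mv}$ or $|Dv|^2-Mv$ at an interior maximum. Then $F^{ij}W_{ij}\leq0$, and using \eqref{eqn2.10}, $F^{ij}\geq\lambda S^{-2}I$, together with $F^{ij}v_{ij}=\big(\sigma_2+B-SB_S\big)/S\geq\lambda/S$ (by homogeneity and $B_S\leq0$), we get
\[
0\geq F^{ij}W_{ij}\geq 2F^{ij}v_{ki}v_{kj}+M\lambda/S-2|Dv||F_x|.
\]
The term $F_x$ is of size $|\eta_*'|\,S|D\psi|/(P\psi^2 S)\lesssim 1/(P\psi^2)$ on the support of $\eta_*'$, where $S\sim P|\psi|$, that is, $\lesssim P/S^2$. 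Balancing this against $M\lambda/S$ and $F^{ij}v_{ki}v_{kj}\geq \lambda|D^2v|^2/S^2$ should yield $|Dv|^2\leq CP$.

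The main obstacle is controlling the $x$-derivative of the right-hand side, and in particular the $g_x$ term, which the paper says must not enter the constants. I expect to handle it by restricting the Bernstein argument to the collar $\{-\psi<\sqrt\lambda/(\tau P)\}$, where the density is constant, and using interior convexity-type bounds elsewhere. The $C^2$ smallness of that collar width, $\sim1/P$, combined with $|v|\leq C$, then gives $|Dv|^2\lesssim P$ by a one-dimensional interpolation across the collar.
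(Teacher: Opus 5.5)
Your proposal has a genuine gap at exactly the point you flag as the main obstacle. Differentiating $F(x,D^2v)=0$ produces $B_x$, which contains $g_x$ wherever $\eta_*<1$, and none of your workarounds removes it.

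\emph{The global Laplacian bound is not available.} Your ``simplest plan'' needs $\Delta v\leq CP$ on all of $\Omega$. But \eqref{eqn2.18} only gives $\Delta v\leq\max\{\max_{\partial\Omega}\Delta v,CP\}$. The boundary bound of Lemma~\ref{lem:aux-normal} is $C(1+K_4+K_3^2+P^2)$, so it depends on $K_3,K_4$. It is also proved from the present lemma, through Lemma~\ref{lem:aux-mixed}, whose barrier uses $|z|\leq C(1+\sqrt P)$. The plan is therefore circular. In any case, integrating $v_{ee}$ to the boundary would give only $|Dv|\lesssim P$.

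\emph{The collar fallback fails.} The density is known to be constant near $\partial\Omega$ only on the strip $0<-\psi<\sqrt\lambda/(\tau P)$, and the gradient may peak deep inside, where $g$ varies. There is no convexity-type interior gradient bound for a merely $2$-admissible $v$. Moreover, interpolating across a strip of width $\sim1/P$ with $|v|\leq C$ yields $|Dv|\lesssim P$, not $\sqrt P$.

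\emph{Two smaller errors.} Your dichotomy is reversed: on $\{\Delta v>-\tau P\psi\}$ the argument of $\eta_*$ exceeds $\tau$, so $\eta_*=1$ and $\sigma_2[D^2v]=\lambda$ exactly. The density can vary only where $\Delta v\leq-\tau P\psi\leq CP$. Also, $|Dv|^2-Mv$ has the wrong sign, since $\mathcal L_v(-Mv)=-M\sigma_2^{1/2}[D^2v]<0$.

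The missing idea is that the maximum point places itself in the constant-density region, so $g_x$ never appears. Maximize $Q:=|Dv|^2/(C_0-v)$ with $1\leq C_0-v\leq C$. Alternatively, use $\tfrac12|Dv|^2+Mv$ with $M$ a large multiple of $P$, which you listed but did not exploit. At an interior maximum with $Dv=pe_1$, $p>0$, the first-order condition gives $v_{1i}=-\frac{p^2}{2(C_0-v)}\delta_{1i}$. Admissibility then yields $\Delta v\geq|D^2v|\geq|v_{11}|\geq cp^2\geq cQ$. If $Q>CP$ with $C$ large, then $\Delta v>\tau P\sup_\Omega(-\psi)$ near that point. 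Hence $\eta_*=1$ there, and $v$ solves $\sigma_2^{1/2}[D^2v]=\sqrt\lambda$ on a neighborhood. Differentiating this constant equation gives $\mathcal L_vv_k=0$ and $\mathcal L_vv=\sqrt\lambda$. The first-order condition cancels the terms in $\sigma_2^{ij}v_iv_j$, leaving
\[
\mathcal L_v\log Q=\frac{1}{\sqrt\lambda\,p^2}\sum_{i,j,k}\sigma_2^{ij}[D^2v]v_{ki}v_{kj}+\frac{\sqrt\lambda}{C_0-v}>0,
\]
which contradicts maximality. Hence $Q\leq CP$ at interior maxima, while \eqref{eqn2.4} bounds $Q$ on $\partial\Omega$. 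This gives \eqref{eqn2.12}, and no derivative of $g$, $\eta_*$, or $\psi$ enters.
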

\begin{proof}
By \eqref{eqn2.4}, choose a fixed $C_0$ so that $1\leq C_0-v\leq C$, and maximize
\[
Q:=\frac{|Dv|^{2}}{C_0-v}.
\]
At a positive interior maximum, put $p:=|Dv|>0$ and rotate coordinates so that $Dv=pe_1$. The first derivative identity gives
\[
v_{11}=-\frac{p^{2}}{2(C_0-v)},\quad \Delta v\geq|v_{11}|\geq cp^{2}.
\]
If $Q>CP$, with $C$ sufficiently large, the equation is $\sigma_{2}^{1/2}[D^{2}v]=\sqrt{\lambda}$ near the maximum point. Differentiating this equation and using the first derivative identity for $Q$, we obtain there
\[
\mathcal{L}_v\log Q
=\frac{1}{\sqrt{\lambda}\,p^{2}}\sum_{i,j,k}\sigma_{2}^{ij}[D^{2}v]v_{ki}v_{kj}
+\frac{\sqrt{\lambda}}{C_0-v}>0,
\]
a contradiction. The boundary gradient estimate therefore implies $Q\leq CP$.
\end{proof}

Set
\[
\mathcal E:=\{x\in\Omega:\Delta v\leq-2\tau P\psi\}.
\]
On this set,
\begin{equation}\label{eqn2.11}
\Delta v\leq-CP\psi,\quad 1\leq-CP\psi,
\end{equation}
whereas $\sigma_2^{1/2}[D^2v]=\sqrt\lambda$ on $\Omega\setminus\mathcal E$.

\begin{lemma}\label{lem:aux-mixed}
For every unit tangential vector $\xi$ on $\partial\Omega$,
\begin{equation}\label{eqn2.17}
\sup_{\partial\Omega}|v_{\xi\xi}|\leq C,\quad
\sup_{\partial\Omega}|v_{\nu\xi}|\leq C(K_3+P).
\end{equation}
\end{lemma}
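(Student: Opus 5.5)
The tangential bound is immediate from the boundary condition. In the local chart at $x_0=0$ with $e_n=\nu(0)$, the function $v-\Phi$ vanishes on the graph $x_n=\rho(x')$. Differentiating twice tangentially at $0$ gives
\[
v_{\alpha\beta}(0)=\Phi_{\alpha\beta}(0)-(v-\Phi)_n(0)\rho_{\alpha\beta}(0),\qquad \alpha,\beta<n.
\]
The right-hand side is bounded by \eqref{eqn2.4}, which controls $|Dv|$ on $\partial\Omega$, together with the $C^2$ bounds on $\rho$ and $\Phi$. This yields $\sup_{\partial\Omega}|v_{\xi\xi}|\leq C$.

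For the mixed derivatives I will use the standard barrier argument of Caffarelli--Nirenberg--Spruck, in the form used in \cite{LyuGlobal}.

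\textbf{Step 1: the tangential operator.} Fix $\alpha<n$ and set
\[
T:=\partial_\alpha+\sum_{\beta<n}\rho_{\alpha\beta}(0)\,(x_\beta\partial_n-x_n\partial_\beta).
\]
This is a first-order operator tangent to $\partial\Omega$ up to an error of order $|x'|^2$. The test function is $w:=T(v-\Phi)$ on $\Omega_\delta:=\Omega\cap B_\delta(0)$. On $\partial\Omega\cap B_\delta$ we have $|w|\leq C|x|^2$, with a constant that uses $\|\rho\|_{C^2}$ and the boundary gradient bound. On $\Omega\cap\partial B_\delta$ we have $|w|\leq C\|Dv\|_\infty\leq C\sqrt P$ by Lemma~\ref{lem:aux-gradient}.

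\textbf{Step 2: the interior bound on $\mathcal{L}_v w$.} This is where \eqref{eqn2.7} enters, and it is the main obstacle. The plan is to choose $\delta$ with $\delta\leq c/P$, so that $\Omega_\delta$ lies in the collar $0<-\psi<\sqrt\lambda/(\tau P)$. There the density is the constant $\lambda$, and $\sigma_2^{1/2}[D^2v]=\sqrt\lambda$ holds exactly. Since the equation is translation-invariant and also invariant under the rotations generated by $x_\beta\partial_n-x_n\partial_\beta$, we get $\mathcal{L}_v(Tv)=0$. What remains is $\mathcal{L}_v(T\Phi)$, which involves third derivatives of $\Phi$. Hence
\[
|\mathcal{L}_v w|\leq CK_3\,\frac{\Delta v}{\sqrt\lambda}.
\]
The operator coefficients involve $\rho_{\alpha\beta}(0)$ only, so no derivatives of $g$ appear.

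\textbf{Step 3: the barrier.} Take
\[
h:=A_1\psi-A_2|x|^2+A_3(v-\Phi).
\]
By \eqref{eqn2.3} and concavity, $\mathcal{L}_v\psi\geq c\,\Delta v$. We also have $\mathcal{L}_v|x|^2\leq C\Delta v$, and $\mathcal{L}_v(v-\Phi)\leq C+C\Delta v$, using homogeneity $\mathcal{L}_v v=\sqrt\lambda$. The scales are then fixed as follows.
\begin{itemize}
\item Choose $A_1\sim K_3+P$. This makes $\mathcal{L}_v h\geq|\mathcal{L}_v w|$, after absorbing the constant term using $\Delta v\geq c$.
\item Choose $A_2\sim(K_3+P)$. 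This dominates $w$ on $\partial B_\delta$, which needs $A_2\delta^2\geq C\sqrt P$; this is compatible with $\delta\sim1/P$ after enlarging $A_2$ to about $P^{2}\sqrt P$.
\end{itemize}
Step 3 is where the bookkeeping is delicate. The goal is to keep the normal-derivative bound of the form $A_1\leq C(K_3+P)$. To that end, I would instead avoid shrinking $\delta$ with $P$: use the weighted quantity built from $\psi$ itself. The function $A_1\psi+w$ is a subsolution wherever the density is constant. Outside the collar one uses $\Delta v\leq-CP\psi$ from \eqref{eqn2.11}, so the right-hand-side terms are still controlled by $CP\,\Delta v$-type expressions after dividing out.

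\textbf{Step 4: conclusion.} The maximum principle gives $h\pm w\leq0$ in $\Omega_\delta$, with equality at $0$. Differentiating in the normal direction gives $|w_n(0)|\leq C(K_3+P)$, that is, $|v_{\nu\xi}(0)|\leq C(K_3+P)$. Since the chart constants are uniform in $x_0$, this holds at every boundary point, which proves \eqref{eqn2.17}.
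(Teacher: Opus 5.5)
Your tangential bound is correct and matches the paper's argument. The mixed bound has a genuine gap, located at the step you yourself call the main obstacle.

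Your first plan, shrinking to $\Omega\cap B_\delta$ with $\delta\sim 1/P$ so that you stay inside the constant-density collar, does close. But, as you note, the face $|x|=\delta$ forces the barrier coefficients up to order $P^{5/2}$. That gives a mixed bound of that order rather than $C(K_3+P)$, and the power matters downstream: Lemma~\ref{lem:aux-normal} uses $(K_3+P)^2\le CK$ with $K\sim K_3^2+P^2$.

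Your second plan does not work. It uses a region of fixed size and claims that outside the collar ``the right-hand-side terms are still controlled by $CP\,\Delta v$-type expressions'' via \eqref{eqn2.11}. Wherever $\sigma_2[D^2v]$ is not constant, $\mathcal L_v(Tv)=T\bigl(B(x,\Delta v)^{1/2}\bigr)$. This contains $Tg$ and, through the cutoff $\eta_*$, third derivatives of $v$. The bound $\Delta v\le -CP\psi$ controls neither, and the lemma must hold with constants independent of derivatives of $g$.

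Note also that the relevant dichotomy is $\mathcal E$ versus $\Omega\setminus\mathcal E$, not collar versus non-collar. The density is already the constant $\lambda$ on all of $\Omega\setminus\mathcal E$, so $\mathcal L_v(Tv)=0$ there. Estimate \eqref{eqn2.11} is a statement about $\mathcal E$ only.

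The missing idea is to avoid the second-derivative test on $\mathcal E$ altogether and use the first-derivative test there. The paper works in a fixed thin cylinder $\omega_{r_0}$ with $V=\psi-\frac\theta4|y|^2$, which satisfies $\mathcal L_vV>0$ and $V_n\le-c$. It compares $\pm z$ with $-A(K_3+P)V$, where $z$ is $Tv$ minus an affine function. Your $T(v-\Phi)$ would also do, at the cost of an error $CK_3\Delta v$ in $\mathcal L_v$.

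Suppose $\pm z+A(K_3+P)V$ had a positive maximum at a point of $\mathcal E$. Its gradient would vanish there, so $|z_n|=A(K_3+P)|V_n|\ge cA(K_3+P)$. But on $\mathcal E$ one has $|D^2v|\le\Delta v\le-CP\psi\le CP$, and $|Dv|\le C\sqrt P$ by Lemma~\ref{lem:aux-gradient}. Hence $|z_n|\le CP$, a contradiction for large $A$.

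Off $\mathcal E$ the usual second-derivative contradiction applies. On the top and lateral faces, $|z|\le C\sqrt P$ is dominated because the cylinder has fixed size. This is exactly where the factor $P$ in $C(K_3+P)$ comes from; without it your Step~4 is unjustified.

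A minor point: the boundary bound $|T(v-\Phi)|\le C|x|^2$ on $\partial\Omega$ requires $\|\rho\|_{C^3}$. Under $C^{2,\beta}$ control the normal component of $T$ is only $O(|y|^{1+\beta})$, so the constant is $CK_3$. This is harmless for the target estimate.
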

\begin{proof}
Fix $x_0\in\partial\Omega$, write $y=x'$, and use the coordinates above, with $x_0=0$ and $x_n=\rho(y)$. Let
\[
\varphi(y):=\Phi(y,\rho(y)),\quad b(y):=v_n(y,\rho(y)).
\]
Twice differentiating the boundary condition gives
$v_{ij}(0)=\varphi_{ij}(0)-v_n(0)\rho_{ij}(0)$ for $i,j<n$. Hence the tangential estimate follows from \eqref{eqn2.4}.

For the tangential direction $e_1$, let
\begin{equation}\label{eq:tangential-field}
T:=\partial_1+\sum_{j<n}\rho_{1j}(0)(x_j\partial_n-x_n\partial_j)
=\sum_i a_i(x)\partial_i.
\end{equation}
Differentiating the constant equation under the Euclidean motions generated by $T$ and using concavity gives \cite[Lemma~2.1]{ITW}
\begin{equation}\label{eqn2.13}
\mathcal{L}_v(Tv)=0,\quad \mathcal{L}_v(T^{2}v)\geq0
\quad\text{on }\Omega\setminus\mathcal E.
\end{equation}
Let $s:=x_n-\rho(y)$ and consider the graph cylinder
\[
\omega_{r_0}:=\{|y|<r_0,\ 0<s<r_0^{2+2\beta}\}.
\]
Put
\begin{equation}\label{eqn2.14}
V:=\psi-\frac\theta4|y|^{2}.
\end{equation}
After decreasing $r_0$ depending only on the uniform geometry, \eqref{eqn2.2}--\eqref{eqn2.3}, $-\psi\asymp s$, and $\psi_n<0$ give, on $\omega_{r_0}$,
\[
\lambda(D^{2}V-\tfrac\theta2 I)\in\Gamma_2,\quad
\mathcal{L}_v V\geq\frac{(n-1)\theta\Delta v}{4\sigma_2^{1/2}[D^{2}v]},\quad
-V\geq c(s+|y|^{2}),\quad V_n\leq-c.
\]
The boundary data satisfy
\begin{equation}\label{eqn2.15}
\|\varphi\|_{C^2}\leq C,\quad
\|\varphi\|_{C^3}\leq CK_3,\quad
\|\varphi\|_{C^4}\leq C(1+K_4+K_3^2).
\end{equation}
Set $\delta(y):=a_n-\sum_{i<n}\rho_i a_i$, evaluating the coefficients on the graph. Taylor's formula gives
\begin{equation}\label{eqn2.16}
\delta(0)=D\delta(0)=0,\quad
|\delta(y)|\leq CK_3|y|^2,\quad
|\delta(y)|\leq C|y|^{1+\beta}.
\end{equation}
On $\partial\Omega$, $Tv=\sum_{i<n}a_i\varphi_i+\delta b$. Subtract from $Tv$ the affine Taylor polynomial in $y$ of this boundary restriction at the origin, and denote the result by $z$. Since $|b|\leq C$ and $\delta$ vanishes to second order,
\[
|z|\leq CK_3|y|^2\quad\text{on }\partial\Omega,
\]
and the coefficients of the affine polynomial are bounded by $C$.

For a sufficiently large fixed $A$, we have
\[
\pm z\leq-A(K_3+P)V
\]
in the cylinder. On $\partial\Omega\cap\partial\omega_{r_0}$ this follows from the preceding quadratic remainder. On the interior and lateral faces, Lemma~\ref{lem:aux-gradient} gives $|z|\leq C(1+\sqrt P)$, whereas $-V\geq c\min\{r_0^{2+2\beta},r_0^2\}$.

If $\pm z+A(K_3+P)V$ had a positive interior maximum in $\mathcal E$, then \eqref{eqn2.11} would give
\[
|z_n|\leq C(|D^2v|+|Dv|)\leq C(-P\psi+\sqrt P)\leq CP.
\]
At the maximum this contradicts the first derivative identity, since $V_n\leq-c$ and $A$ is large. At a maximum outside $\mathcal E$, \eqref{eqn2.13} gives
\[
\mathcal{L}_v\bigl(\pm z+A(K_3+P)V\bigr)=A(K_3+P)\mathcal{L}_v V>0,
\]
contrary to the second derivative test.

At the origin, $v_j=\Phi_j=0$ for $j<n$, so $z_n=v_{1n}$. Differentiating the comparison gives $|v_{1n}(0)|\leq C(K_3+P)$.
\end{proof}

Every solution of \eqref{eqn2.7} satisfies
\begin{equation}\label{eqn2.18}
|D^{2}v|\leq\Delta v\leq
\max\left\{\max_{\partial\Omega}\Delta v,CP\right\}
\quad\text{in }\Omega.
\end{equation}
Indeed, $|D^2v|^2=(\Delta v)^2-2\sigma_2[D^2v]$, and \eqref{eqn2.11} bounds $\Delta v$ on $\mathcal E$. On its complement, concavity gives $\mathcal{L}_v\Delta v\geq0$; apply the maximum principle on each component.

For smooth $v$, the next estimate follows from \cite[Corollary~2.5]{LyuInterior} with constant comparison density and an enlarged cutoff threshold. The proof below also covers $v\in C^2$.

\begin{lemma}\label{thm2.1}
Let $0<\lambda\leq\Lambda$, $A>0$, $\tau>0$, and $P\geq1$. Suppose that $v\in C^{2}(B_{2})$ is $2$-admissible and satisfies
\[
\lambda\leq\sigma_{2}\big[D^{2}v\big]\leq\Lambda\quad\text{in }B_{2},\quad
\|v\|_{L^{\infty}(B_{2})}\leq A.
\]
If $\sigma_{2}[D^{2}v]$ is constant on $\{\Delta v>\tau P\}$, then
\begin{equation}\label{eqn2.6}
\sup_{B_{1/2}}\Delta v\leq CP,
\end{equation}
where $C$ depends only on $n,\lambda,\Lambda,A$, and $\tau$.
\end{lemma}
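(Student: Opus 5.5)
We need a plan for the interior Laplacian bound $\sup_{B_{1/2}}\Delta v\leq CP$ when the density is constant on the high-Laplacian set $\{\Delta v>\tau P\}$. The approach is a Pogorelov/Chou--Wang type maximum principle for a cutoff-weighted Laplacian, run only where $\Delta v$ is large, so the equation has constant density there. Where $\Delta v$ is small the bound $\Delta v\leq\tau P$ holds trivially.

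\textbf{Step 1 (reduction to smooth $v$).} On the open set $U:=\{\Delta v>\tau P\}$ the density is a constant $f_0\in[\lambda,\Lambda]$. Since $\sigma_2^{1/2}$ is concave and uniformly elliptic there (by the $C^2$ bound on $v$, locally), Evans--Krylov and Schauder theory make $v$ smooth in $U$. Hence all differentiation is done inside $U$, and this also covers the $C^2$ case.

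\textbf{Step 2 (gradient bound).} We bound $|Dv|$ on $B_{3/2}$ by a constant $C(n,A)$. This uses only subharmonicity ($\Delta v>0$) and $\|v\|_\infty\leq A$. Subharmonic functions bounded by $A$ are controlled in $L^1$, and $\int_{B_{7/4}}\Delta v\leq C$ follows by integrating against a cutoff. A one-sided gradient bound is not automatic for subharmonic functions, however. To avoid needing it, I would use the test function
\[
W:=\eta(x)^{\gamma}\,\Delta v\,\exp\bigl(b\,|x|^{2}/2 - \kappa v\bigr)
\]
with cutoff $\eta=(1-|x|^2)_+$. Here the term $\mathcal L_v|x|^2=\operatorname{tr}(\sigma_2^{ij})/\sigma_2^{1/2}=(n-1)\Delta v/\sqrt{f_0}$ provides the large positive quantity.

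\textbf{Step 3 (maximum principle, the main obstacle).} Consider the maximum of $W$ over $B_1$. If it occurs outside $U$, then $W\leq C\tau P$ and we are done. Otherwise, at an interior maximum in $U$, we use:
\begin{itemize}
\item $\mathcal L_v\Delta v\geq \frac{1}{4f_0^{1/2}}\cdot$ (the third-derivative term) from concavity, which controls $\sigma_2^{ij}(\Delta v)_i(\Delta v)_j/(\Delta v)^2$ in the standard way, as in the proofs for $\sigma_2$ Pogorelov-type estimates;
\item $\mathcal L_v v=\sqrt{f_0}$;
\item $\mathcal L_v\eta\geq -C\Delta v/\sqrt{\lambda}$.
\end{itemize}
The cutoff gradient terms $\gamma\sigma_2^{ij}\eta_i\eta_j/\eta^2$ are absorbed using the first-order condition and the bound $\sigma_2^{ij}\leq \Delta v\,\delta_{ij}$. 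This yields $\eta^2\Delta v\leq C$ at the maximum point, with $C$ independent of $P$ unless the maximum lies outside $U$.

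The delicate point is handling the $\eta$-gradient terms without a gradient bound. The term $\kappa$ times $\sigma_2^{ij}v_iv_j$ enters with a good sign once we choose $\exp(-\kappa v)$ appropriately, but if this sign fails I would instead replace $v$ by $|x|^2$-only weights and use the $L^1$ bound on $\Delta v$ together with a Chou--Wang $\sigma_2$ Pogorelov estimate. Since $\Delta v\geq\sqrt{2\lambda}$, the quantity $\sigma_2^{ij}\eta_i\eta_j/\eta^2\leq C\Delta v/\eta^2$ is dominated by $b(n-1)\Delta v/\sqrt{f_0}$ once $\eta^{\gamma}$ with $\gamma\geq2$ and $b$ large relative to the weight. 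Then $\sup_{B_{1/2}}\Delta v\leq C\max\{1,\tau P\}\leq CP$, since $P\geq1$, with $C$ depending on $n,\lambda,\Lambda,A,\tau$.
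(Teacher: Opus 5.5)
\paragraph{The gap is in Step~3.} A spatial cutoff cannot close the maximum principle here. Write $F^{ij}=\sigma_2^{ij}/(2\sqrt{f_0})$ and $\eta=(1-|x|^2)_+$. Both cutoff terms carry the factor $\sum_iF^{ii}=(n-1)\Delta v/(2\sqrt{f_0})$:
\[
\gamma\,\mathcal L_v\eta/\eta=-\gamma(n-1)\Delta v/(\sqrt{f_0}\,\eta),\qquad
\gamma F^{ij}\eta_i\eta_j/\eta^2\lesssim\gamma\Delta v/\eta^2 .
\]
Suppose the gradient term is converted through the first-order condition, as you suggest. The second-order term $\gamma\mathcal L_v\eta/\eta$ is untouched by that. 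Your compensating terms are $b\sum_iF^{ii}\sim b\Delta v$ and $-\kappa\mathcal L_vv=-\kappa\sqrt{f_0}$. The latter is bounded and has the wrong sign for $\kappa>0$, so $e^{-\kappa v}$ contributes nothing useful.

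After dividing by $\Delta v$, the inequality at the maximum only says $b\lesssim\gamma/\eta$. That places the maximum in a thin layer near $\partial B_1$, but it gives no bound at all on $\eta^2\Delta v$. Your claim that $C\Delta v/\eta^2$ is dominated by $b(n-1)\Delta v/\sqrt{f_0}$ for large $b$ is false as $\eta\to0$.

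Adding the usual factor $e^{\gamma'|Dv|^2/2}$ does not help either. The coefficient $\gamma'$ must stay below $c/|Dv|^2$. When $\lambda_2,\dots,\lambda_n$ stay bounded while $\lambda_1\to\infty$ (e.g.\ $\lambda\approx(\lambda_1,\lambda_1^{-1},\dots,\lambda_1^{-1})$), the resulting good term $\gamma'\sum_iF^{ii}\lambda_i^2$ is only of order $\gamma'\lambda_1$. That is still linear in $\Delta v$ and cannot beat $\gamma\Delta v/\eta$.

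This is not a fixable technicality. A cutoff Pogorelov computation of this kind would give an elementary interior Hessian bound for $\sigma_2[D^2v]=\mathrm{const}$ in every dimension, in terms of $\|v\|_\infty$ alone. That is exactly the deep result the paper imports from Li--Wu.

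Separately, your Step~2 claim $|Dv|\le C(n,A)$ is unjustified: bounded subharmonic functions need not be Lipschitz. The paper only obtains the $P$-dependent weighted bound $\varrho^{a-1}|Dv|^2\le CP$.

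\paragraph{The missing idea.} You need a Pogorelov-type weight that vanishes before $\partial B_{3/2}$ and whose $\mathcal L_v$ is bounded below \emph{without} a factor $\sum_iF^{ii}$. Your fallback to a Chou--Wang estimate points this way. However, $v$ has no boundary values to exploit, so such a weight has to be manufactured. The paper does it as follows.
\begin{enumerate}
\item Take $W$ with $\sigma_2[D^2W]=\lambda/4$ in $B_{3/2}$ and $W=v$ on $\partial B_{3/2}$, and set $\varrho:=W-v-\delta$.
\item Concavity gives $F^{ij}\varrho_{ij}\ge F[D^2W]-F[D^2v]\ge-\sqrt\lambda$.
\item Li--Wu's interior bound for $W$, plus a quadratic comparison, give $\varrho\ge\delta$ on $B_{5/4}$.
\item The Trudinger--Wang H\"older modulus and a Poisson-kernel estimate give $\{\varrho>0\}\Subset B_{3/2}$ quantitatively.
\item Maximize $\varrho^a e^{\gamma|Dv|^2/2}v_{\xi\xi}$ with $a=\max\{12,n\}$ and $\gamma=\varrho^{a-1}/(C_0Q)$.
\begin{itemize}
\item If $\lambda_2\ge\lambda_1/(4n)$, the term $\gamma\sum_iF^{ii}\lambda_i^2\ge c\gamma\lambda_1^2\sum_iF^{ii}$ dominates.
\item If $\lambda_2<\lambda_1/(4n)$, the first-order condition turns the transverse gradient terms $F^{ii}\varrho_i^2/\varrho^2$, $i\ge2$, into third-derivative squares $F^{ii}v_{11i}^2/\lambda_1^2$. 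Since $a\ge12$, their total coefficient is below $3/2$, and the concavity term $-F^{ij,kl}v_{ij1}v_{kl1}/\lambda_1$ absorbs them. The bound $F^{11}\lambda_1\ge\sqrt\lambda/n$ then closes the estimate.
\end{itemize}
\item A bootstrap through the H\"older modulus gives $Q\le4P$.
\end{enumerate}

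Your localization idea is correct and is used exactly here. The threshold is chosen so that $\varrho^a\Delta v>2CP$ forces $\Delta v>\tau P$, so the equation is differentiated only where the density is constant.
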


\begin{proof}
Apply the comparison construction in Step~4 of the proof of Theorem~\ref{thmA.1} to $v$ on $D:=B_{3/2}$. It gives a $2$-admissible function $W\in C^\infty(D)\cap C(\overline D)$, with $W=v$ on $\partial D$, such that
\[
\sigma_2[D^2W]=\lambda/4\quad\text{in }D,\quad
v\leq W,\quad \|W\|_\infty\leq A.
\]
By \eqref{eq:ball-comparison}, there are $\delta>0$ and $0<s\leq1$, with $\delta^{-1}+s^{-1}$ bounded in terms of $n,\lambda,\Lambda,A$. With $D_{2s}$ as in Theorem~\ref{thmA.1}, the function $\varrho:=W-v-\delta$ satisfies
\[
\varrho\geq\delta\quad\text{on }B_{5/4},\quad
\{\varrho>0\}\subset D_{2s}.
\]

Set $a:=\max\{12,n\}$. Choose $C=C(n,\lambda,\Lambda,A,\tau)$ so that $CP$ satisfies \eqref{eqnA.4} and $\varrho^a\Delta v>2CP$ implies $\Delta v>\tau P$. The $C^2$ version of \eqref{eqnA.3}, with amplitude bound $1+2A$, threshold $CP$, and constant density in $[\lambda,\Lambda]$, gives
\[
\sup_{\{\varrho>0\}}\varrho^a\Delta v\leq CP.
\]
Since $\varrho\geq\delta$ on $B_{1/2}$, this proves \eqref{eqn2.6}.
\end{proof}

The same estimate holds on fixed nested interior balls by scaling.

The normal derivative estimate for $T^2v$ in \cite[Lemmas~6.2, 6.4, and~6.5]{LyuDegenerate} depends on density derivatives. Here we prove \eqref{eqn2.23} independently of derivatives of $g$.

\begin{lemma}\label{lem:aux-normal}
Every solution of \eqref{eqn2.7} satisfies
\[
\sup_{\partial\Omega}|v_{\nu\nu}|
\leq C(1+K_4+K_3^2+P^2).
\]
\end{lemma}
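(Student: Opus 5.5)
Since $\sigma_2[D^2v]=\lambda$ in the collar $0<-\psi<\sqrt\lambda/(\tau P)$, the equation near $\partial\Omega$ has constant density, so the double normal derivative is determined by the tangential data. The plan is to bound it from above and below in that way.

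\textbf{Lower bound.} At $x_0\in\partial\Omega$, in the coordinates of Lemma~\ref{lem:aux-mixed}, write $D^2v(0)$ in block form with tangential block $v_{\alpha\beta}$ (bounded by \eqref{eqn2.17}), mixed entries $v_{\alpha n}$ (bounded by $C(K_3+P)$), and $v_{nn}$. Expanding,
\[
\lambda=\sigma_2[D^2v](0)=v_{nn}\sum_{\alpha<n}v_{\alpha\alpha}+\sigma_2[(v_{\alpha\beta})]-\sum_{\alpha<n}v_{\alpha n}^2 .
\]
Since $\Delta v>0$ gives $v_{nn}\geq-\sum_\alpha v_{\alpha\alpha}\geq-C$, the lower bound is immediate. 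For the upper bound it therefore suffices to show
\[
\sum_{\alpha<n}v_{\alpha\alpha}(0)\geq c_0>0
\]
uniformly, for then
\[
v_{nn}\leq c_0^{-1}\Bigl(\lambda+C+\sum_\alpha v_{\alpha n}^2\Bigr)\leq C(K_3+P)^2\leq C(1+K_3^2+P^2).
\]

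\textbf{Main step: uniform positivity of the tangential trace.} This is the Caffarelli--Nirenberg--Spruck / Trudinger step, and I expect it to be the obstacle. Take $\tilde x\in\partial\Omega$ minimizing
\[
m(x):=\sum_{\alpha}v_{\alpha\alpha}=\Delta_{\partial\Omega}\varphi-v_\nu H
\]
(curvature sum $H\geq h_0$) over $\partial\Omega$, and assume $m(\tilde x)$ is small. By concavity of $\sigma_2^{1/2}$ restricted to the tangential cone (using $\sigma_1$ of the tangential block as a concave function), one gets a tangential linear function $\ell$ with $m\geq\ell(\cdot)$ along $\partial\Omega$ near $\tilde x$ and equality at $\tilde x$. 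This yields a boundary function
\[
\Theta:=\Bigl(\Delta_{\partial\Omega}\varphi-\textstyle\sum_\alpha \ell_\alpha\cdots\Bigr)/H - v_\nu ,
\]
extended to the cylinder $\omega_{r_0}$, which is $\geq0$ on $\partial\Omega\cap\omega_{r_0}$ and vanishes at $\tilde x$. Here $v_\nu$ is replaced by $\sum_i\nu_i(x')v_i$ with the normal field extended constantly in $s$.

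I then build a barrier $w:=\Theta-A(K_3+P)V-\ldots$ and show $\mathcal L_v w\leq0$ off $\mathcal E$, where the density is constant so that $\mathcal L_v v_i=0$. On $\mathcal E$ I use the first-derivative contradiction as in Lemma~\ref{lem:aux-mixed}, via \eqref{eqn2.11}. On the interior and lateral faces I use Lemma~\ref{lem:aux-gradient}.

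This gives $\partial_\nu\Theta(\tilde x)\geq -C(K_3+P)$, hence $v_{\nu\nu}(\tilde x)\leq C(1+K_4+K_3^2+P^2)$. The $K_4$ enters because $\Theta$ involves second derivatives of $\varphi$ and $\rho$, whose first derivatives cost $K_3$ and whose second derivatives (through $\mathcal L_v$ of the coefficients) cost $K_4$, as in \eqref{eqn2.15}.

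With $v_{\nu\nu}(\tilde x)$ bounded, \eqref{eqn2.1}-type ellipticity together with the expansion at $\tilde x$ forces $m(\tilde x)\geq c/(1+K_4+K_3^2+P^2)$. That alone is not enough, so I would instead run the standard dichotomy:
\begin{itemize}
\item either $m(\tilde x)\geq c_0$ directly;
\item or the barrier argument bounds $v_{\nu\nu}(\tilde x)$, and then $\lambda=v_{nn}m+O(1)-\sum v_{\alpha n}^2$ with $v_{\alpha n}$ controlled gives a contradiction to small $m$ once the mixed terms are handled.
\end{itemize}
Since $\lambda\leq m\,v_{nn}+C$ is needed only with $\sigma_2[(v_{\alpha\beta})]\leq m^2$, we get $m\,v_{nn}\geq\lambda-m^2$. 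Combined with the bound on $v_{nn}(\tilde x)$, this gives $m\geq c/(1+K_4+K_3^2+P^2)$ at the minimum.

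Inserting this into the formula for $v_{nn}$ at a general point would lose a factor, so the final constant should be organized so that only the barrier estimate at each point is used. In fact, applying the same barrier argument at every boundary point (not only at the minimizer) yields the stated linear-in-$(K_4+K_3^2+P^2)$ bound directly.
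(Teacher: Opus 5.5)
Your reduction is sound as far as it goes: $v_{nn}\ge -C$ is immediate, and a \emph{uniform} lower bound $\sum_{\alpha<n}v_{\alpha\alpha}\ge c_0$ would give $v_{nn}\le C(K_3+P)^2$. But you never obtain a uniform $c_0$, and the argument does not prove the stated estimate.

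Put $K:=1+K_4+K_3^2+P^2$. Your barrier at the minimizer $\tilde x$ of $m$ gives $v_{\nu\nu}(\tilde x)\le CK$. Then $m\,v_{nn}\ge\lambda-Cm^2$ yields only $\min_{\partial\Omega}m\ge c/K$. Inserting this into $v_{nn}\le(\lambda+C+\sum_\alpha v_{\alpha n}^2)/m$, with $|v_{\alpha n}|\le C(K_3+P)$, gives $v_{\nu\nu}\le CK(K_3+P)^2\le CK^2$. That is quadratic rather than linear in $K$.

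Your closing claim, that running the barrier at every boundary point gives the linear bound, does not hold. The barrier argument at $\tilde x$ rests on the one-sided inequality $\Theta\ge0$ on $\partial\Omega$, with equality at $\tilde x$, and this comes from the minimality of $m$. At a general point $x_0$, the analogous function $(\Delta_{\partial\Omega}\varphi-m(x_0))/H-v_\nu$ has no sign on $\partial\Omega$ near $x_0$, so there is nothing to compare against.

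The loss is not cosmetic. In Proposition~\ref{prop:boundary-approximation}, the Hessian bound $h^{-(2-\beta)}+P^2$ with $h=R^{1/2}$ must be $\lesssim R^{\alpha_0-1}$, and its square is not.

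Separately, the barrier is only sketched. With $\nu$ extended non-constantly, $\mathcal L_v(\nu_iv_i)$ contains the term $\sigma_2^{jk}[D^2v]\,\partial_j\nu_i\,v_{ik}/\sigma_2^{1/2}[D^2v]$. Off $\mathcal E$ only $|D^2v|\le C\mathcal M$ is known, so this term is not controlled by $\sum_i\sigma_2^{ii}[D^2v]/\sigma_2^{1/2}[D^2v]$. The dots in your $w$ would have to absorb it, at further cost.

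The paper avoids the tangential-trace lower bound altogether. It sets $\mathcal M:=K+\max_{\partial\Omega}\Delta v$ and works with $T^2v$, for which $\mathcal L_v(T^2v)\ge0$ holds exactly off $\mathcal E$ by \eqref{eqn2.13}. Barriers whose boundary error $\mathcal M|y|^{2+2\beta}$ is absorbed by shrinking the cylinder give $(T^2v)_n\le\delta_*\mathcal M+C_{\delta_*}K$ in every tangential direction, i.e.\ $D^2b\le(\delta_*\mathcal M+C_{\delta_*}K)I$. The paper then evaluates at the boundary maximum of $\Delta v$. Once $\mathcal M\ge C_0K$, this is a global maximum by \eqref{eqn2.18}, so $(\Delta v)_i=0$ for $i<n$ and $(\Delta v)_n\le0$ there. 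Differentiating $\sigma_2[D^2v]=\lambda$ in $e_n$ and contracting the bound for $D^2b$ with $qI-(v_{ij})_{i,j<n}$, strict mean convexity produces $-h_0q^2$. This gives $h_0q^2\le C\delta_*q^2+C_{\delta_*}Kq$, hence $q\le CK$, linear in $K$.
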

\begin{proof}
Set
\[
K:=1+K_4+K_3^2+P^2,\quad
\mathcal M:=K+\max_{\partial\Omega}\Delta v.
\]
Since $P\geq1$ and $2PK_3\leq P^2+K_3^2$, we have
\[
P(\sqrt P+1+K_3)+P^2\leq CK.
\]
By \eqref{eqn2.18}, $|D^2v|\leq C\mathcal M$ in $\Omega$. It suffices to consider $\mathcal M\geq C_0K$ with $C_0$ fixed sufficiently large.

In each centered boundary chart, use $T,\varphi,b,\delta,s$, and $V$ from the proof of Lemma~\ref{lem:aux-mixed}. We first prove, for every $\delta_*>0$,
\begin{equation}\label{eqn2.23}
(T^2v)_n(0)\leq\delta_*\mathcal M+C_{\delta_*}K.
\end{equation}
\Needspace{4\baselineskip}
\smallskip
\noindent\emph{Step 1. Boundary values of $T^2v$.}
Differentiating the coefficients in the ambient coordinates before restricting to the graph, set
\[
F_0:=\sum_{i,j<n}\varphi_{ij}a_i a_j+\sum_{i<n}(Ta_i)\varphi_i,\quad A_0:=-\sum_{i,j<n}\rho_{ij}a_i a_j+Ta_n-\sum_{i<n}(Ta_i)\rho_i.
\]
Direct differentiation on the boundary gives
\begin{equation}\label{eqn2.19}
T^2v=F_0+A_0b+2\delta\sum_{i<n}a_i b_i+\delta^2v_{nn}.
\end{equation}
From \eqref{eqn2.15} and Lemma~\ref{lem:aux-mixed},
\[
\begin{gathered}
\|F_0\|_{C^2}\leq CK,\quad A_0(0)=0,\quad
\|DA_0\|_\infty\leq CK_3,\quad
\|D^2A_0\|_\infty\leq C(1+K_4+K_3^2),\\
\|b\|_\infty\leq C,\quad \|Db\|_\infty\leq C(K_3+P).
\end{gathered}
\]
For the last estimate, write $b_i=D^2v(e_i+\rho_i e_n,e_n)$ and decompose $e_n$ into tangential and normal components. Since the first argument is tangent, \eqref{eqn2.17} applies.

Since $A_0(0)=0$, Taylor's formula gives
\[
|A_0(y)b(y)-b(0)DA_0(0)\cdot y|
\leq C\bigl(\|D^2A_0\|_\infty\|b\|_\infty
+\|DA_0\|_\infty\|Db\|_\infty\bigr)|y|^2\leq CK|y|^2.
\]
Here $K_3(K_3+P)\leq CK$. Thus $F_0+A_0b$ has Taylor remainder bounded by $CK|y|^2$. Applying the two bounds in \eqref{eqn2.16} to $2\delta\sum_{i<n}a_i b_i$ and $\delta^2v_{nn}$, respectively, gives a vector $\ell_1$, with $|\ell_1|\leq CK_3$, such that
\begin{equation}\label{eqn2.20}
z:=T^2v-T^2v(0)-\ell_1\cdot y
\leq C\bigl(K|y|^2+\mathcal M|y|^{2+2\beta}\bigr)
\quad\text{on }\partial\Omega.
\end{equation}
On $\mathcal E$, using $T^2v(0)=\varphi_{11}(0)$ and \eqref{eqn2.11}, we instead have
\begin{equation}\label{eqn2.21}
z\leq C(-P\psi+\sqrt P+1+K_3)
\leq-CP(\sqrt P+1+K_3)\psi\leq-CK\psi.
\end{equation}
On $\Omega\setminus\mathcal E$, \eqref{eqn2.13} gives $\mathcal{L}_v z\geq0$.

\smallskip
\noindent\emph{Step 2. Comparison in the fixed cylinder.}
Fix $C_1\geq1$ and set
\[
V_{\mathcal M}:=\psi-\frac{C_1K}{\mathcal M}|y|^2-C_1|y|^{2+2\beta}.
\]
Choose $C_0$ sufficiently large and $r_0$ sufficiently small that
\[
\frac{2C_1}{C_0}+C_\beta C_1r_0^{2\beta}\leq\frac\theta2.
\]
Since $\mathcal M\geq C_0K$ and $|D^2|y|^{2+2\beta}|\leq C_\beta|y|^{2\beta}$, \eqref{eqn2.2} now gives
$\lambda(D^2V_{\mathcal M}-\tfrac\theta2 I)\in\Gamma_2$ throughout the fixed cylinder. By \eqref{eqn2.20}, a fixed multiple of $-\mathcal M V_{\mathcal M}$ dominates $z$ on $\partial\Omega\cap\partial\omega_{r_0}$. On the face $s=r_0^{2+2\beta}$, $-\mathcal M V_{\mathcal M}\geq c\mathcal M r_0^{2+2\beta}$, since $-\psi\asymp s$. On the lateral face, $|y|=r_0$, the term $C_1\mathcal M|y|^{2+2\beta}$ gives the same lower bound. Since $r_0$ is fixed and $|z|\leq C\mathcal M$ on these faces, enlarging the fixed multiple gives the boundary comparison. On $\mathcal E$, \eqref{eqn2.21} and $-\mathcal M V_{\mathcal M}\geq-\mathcal M\psi\geq-K\psi$ give the same inequality.

If $z+C\mathcal M V_{\mathcal M}$ had a positive interior maximum, it would therefore lie outside $\mathcal E$. At that point,
\[
\mathcal{L}_v(z+C\mathcal M V_{\mathcal M})\geq C\mathcal M \mathcal{L}_v V_{\mathcal M}>0,
\]
contradicting the second derivative test. Thus
\begin{equation}\label{eqn2.22}
z\leq C\bigl(K|y|^2+\mathcal M|y|^{2+2\beta}+\mathcal M s\bigr).
\end{equation}

\smallskip
\noindent\emph{Step 3. The normal derivative of $T^2v$.}
For $0<r<r_0$, consider the cylinder
\[
\omega_r:=\{|y|<r,\ 0<s<r^{2+2\beta}\}.
\]
On $\partial\Omega\cap\partial\omega_r$ and the lateral face, \eqref{eqn2.20} and \eqref{eqn2.22} show that
\[
z\leq C(K+\mathcal M r^{2\beta})(-V).
\]
On the face $s=r^{2+2\beta}$, we have $d(x)\geq cr^{2+2\beta}$. Since the density is constant whenever $\Delta v\geq CP$, Lemma~\ref{thm2.1}, applied on fixed interior balls, gives $|D^2v|\leq C_rP$ there. It follows that $z\leq C_rK$ on this face, where $-V\geq cr^{2+2\beta}$. On $\mathcal E$, \eqref{eqn2.21} gives $z\leq CK(-V)$. Therefore comparison with
\[
\bigl(C\mathcal M r^{2\beta}+C_rK\bigr)(-V)
\]
is valid throughout $\omega_r$, by the same maximum-principle argument. Since $z=V=0$ and $0<-V_n\leq C$ at the origin, inward differentiation gives
\[
(T^2v)_n(0)\leq C\mathcal M r^{2\beta}+C_rK.
\]
Choosing $r$ so that $Cr^{2\beta}\leq\delta_*$ proves \eqref{eqn2.23}, with $C_{\delta_*}$ depending only on $\delta_*$ and the uniform data.

\smallskip
\noindent\emph{Step 4. The normal Hessian bound.}
Choose $x_0\in\partial\Omega$ where $\Delta v$ attains its boundary maximum, and take $x_0=0$. Since $K\geq P$, the assumption $\mathcal M\geq C_0K$, with $C_0$ fixed sufficiently large, gives $\Delta v(0)=\mathcal M-K>CP$, where $C$ is the constant in \eqref{eqn2.18}. That estimate makes the origin a global maximum of $\Delta v$. Thus, in the coordinates fixed above,
\[
(\Delta v)_i(0)=0\quad(i<n),\quad (\Delta v)_n(0)\leq0.
\]
Put $q:=v_{nn}(0)$, and evaluate all derivatives below at the origin. The equation is constant near the boundary and $v\in C^{3,\vartheta}$.

At the center of the chart,
\begin{equation}\label{eqn2.24}
b_{11}=v_{11n}+\rho_{11}q,\quad
(T^2v)_n=b_{11}-2\sum_{j<n}\rho_{1j}v_{1j}
-\sum_{j<n}\rho_{1j}^{2}v_n.
\end{equation}
Since the direction $e_1$ is arbitrary, \eqref{eqn2.23} and the tangential and boundary gradient estimates give
\[
D^2b\leq(\delta_*\mathcal M+C_{\delta_*}K)I,
\quad v_{ijn}=b_{ij}-q\rho_{ij}\quad(i,j<n).
\]
The second identity follows by differentiating $b(y)=v_n(y,\rho(y))$ twice, since $D\rho(0)=0$.

Three tangential derivatives of $v(y,\rho(y))=\varphi(y)$, followed by summation and $(\Delta v)_i=0$, give, for $i<n$,
\[
v_{inn}=-\sum_{j<n}\varphi_{ijj}
+v_{in}\sum_{j<n}\rho_{jj}
+2\sum_{j<n}\rho_{ij}v_{jn}
+v_n\sum_{j<n}\rho_{ijj}.
\]
Using $\|\varphi\|_{C^3}+\|\rho\|_{C^3}\leq CK_3$, the uniform $C^2$ bound for $\rho$, and \eqref{eqn2.4}, \eqref{eqn2.17}, we obtain
\[
|v_{inn}|\leq C(K_3+P),\quad
\left|\sum_{i<n}v_{in}v_{inn}\right|
\leq C(K_3+P)^2\leq CK.
\]

Differentiating $\sigma_2[D^2v]=\lambda$ in the inward normal direction and using
$\sum_{i<n}v_{ii}=\sigma_2^{nn}[D^2v]>0$, we obtain
\[
\begin{aligned}
0&=\left(\sum_{i<n}v_{ii}\right)(\Delta v)_n
+\sum_{i,j<n}(q\delta_{ij}-v_{ij})v_{ijn}
-2\sum_{i<n}v_{in}v_{inn}\\
&\leq\sum_{i,j<n}(q\delta_{ij}-v_{ij})(b_{ij}-q\rho_{ij})+CK.
\end{aligned}
\]
Since the tangential Hessian is uniformly bounded, $q=\mathcal M-K-\sum_{i<n}v_{ii}$, and $K\geq1$, increasing $C_0$ ensures $q\geq1$ and $qI-(v_{ij})_{i,j<n}>0$. Contracting the upper bound for $D^2b$ with this matrix, and using strict mean convexity, gives
\[
\begin{aligned}
0&\leq(\delta_*\mathcal M+C_{\delta_*}K)
\left((n-1)q-\sum_{i<n}v_{ii}\right)-q^2\sum_{i<n}\rho_{ii}
+q\sum_{i,j<n}v_{ij}\rho_{ij}+CK\\
&\leq Cq(\delta_*\mathcal M+C_{\delta_*}K)
-h_0q^2+Cq+CK.
\end{aligned}
\]
Since $\mathcal M=K+q+\sum_{i<n}v_{ii}\leq q+CK$ and $K,q\geq1$, this implies
\[
h_0q^2\leq C\delta_*q^2+C_{\delta_*}Kq.
\]
Fix $\delta_*>0$ so small that $C\delta_*\leq h_0/2$. Then $q\leq CK$ and hence $\mathcal M\leq CK$. Equation~\eqref{eqn2.18} and continuity give the result.
\end{proof}

Auxiliary problems on balls with a fixed trace threshold were solved in \cite[Theorem~3.1]{CZZ} and \cite[Proposition~2.3]{LyuInterior}.

\begin{lemma}\label{thm2.2}
Let $\Omega$ and $\Phi$ be smooth and satisfy the uniform assumptions above. For every $g\in C^2(\overline{\Omega})$ with $\sqrt\lambda\leq g\leq\sqrt\Lambda$ and every $P\geq1$, problem \eqref{eqn2.7} admits a $2$-admissible solution $v\in C^{3,\vartheta}(\overline{\Omega})$ for every $0<\vartheta<1$. Moreover,
\begin{equation}\label{eqn2.8}
\|Dv\|_\infty^2\leq CP,\quad
\|D^2v\|_\infty\leq C(1+K_4+K_3^2+P^2),
\end{equation}
where $C$ is independent of derivatives of $g$ and of the higher boundary norms $K_3,K_4$.
\end{lemma}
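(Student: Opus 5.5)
The plan is the method of continuity combined with the a priori estimates already established. The estimates \eqref{eqn2.8} are exactly Lemma~\ref{lem:aux-gradient} together with Lemma~\ref{lem:aux-normal} and \eqref{eqn2.18}, so only existence needs an argument. I would use a continuity path in the density. For $t\in[0,1]$ let $g_t:=\big((1-t)\lambda+t g^2\big)^{1/2}$, so that $\sqrt\lambda\leq g_t\leq\sqrt\Lambda$ and $g_t\in C^2$. Consider \eqref{eqn2.7} with $g$ replaced by $g_t$. At $t=0$ the right-hand side is identically $\lambda$, because $g_0^2=\lambda$. Solvability then follows from Caffarelli--Nirenberg--Spruck \cite{CNS} on the smooth strictly mean convex domain $\Omega$ with smooth data $\Phi$, which gives a smooth $2$-admissible solution. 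Let $I\subset[0,1]$ be the set of $t$ for which a $2$-admissible solution in $C^{3,\vartheta}(\overline\Omega)$ exists. We have shown $0\in I$, and it remains to show that $I$ is open and closed.

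\emph{Closedness.} Along the path, Lemmas~\ref{lem:aux-gradient}, \ref{lem:aux-mixed} and \ref{lem:aux-normal} and \eqref{eqn2.18} give uniform $C^2$ bounds. Since these bounds are independent of derivatives of $g_t$, they are uniform in $t$. Next we need uniform ellipticity and concavity. The equation is written as $F(x,D^2v)=0$ with $F$ concave in $H$ on $\{\operatorname{tr}H>0\}$. By \eqref{eqn2.10}, $F^{ij}\geq\lambda/S^2$. Since $S=\Delta v$ is bounded above, and below by $\sqrt{2\lambda}$, the operator is uniformly elliptic. The boundary data are smooth, and $F$ is $C^{1}$ in $x$ because $g\in C^2$ and $\eta_*$ and $\psi$ are smooth. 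Evans--Krylov with Krylov's boundary version then gives a $C^{2,\gamma}(\overline\Omega)$ bound. Schauder theory for the differentiated equation then yields $C^{3,\vartheta}$ bounds. Here those bounds may depend on $\|g\|_{C^2}$ and on $K_3,K_4$, which is harmless for existence. Near $\partial\Omega$ the density is constantly $\lambda$, so boundary regularity there is standard. Admissibility is preserved in the limit because $\sigma_2\geq\lambda$ and $\Delta v\geq\sqrt{2\lambda}$. Arzelà--Ascoli then gives closedness.

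\emph{Openness.} I would use the implicit function theorem in $C^{3,\vartheta}$ with zero boundary values for the map $(v,t)\mapsto F_t(x,D^2v)$. The linearization is $F^{ij}\partial_{ij}$. This operator has no zeroth-order term and is uniformly elliptic by \eqref{eqn2.10}, so it is invertible by Schauder theory. One point needs care. The dependence on $x$ through $\eta_*(S/(-P\psi))$ is singular as $\psi\to0$, but $B=\lambda$ in the collar $0<-\psi<S/(\tau P)$, and $S$ stays bounded below along nearby solutions. Hence $F$ is smooth on a neighborhood of the graph of $D^2v$, as already noted after \eqref{eqn2.10}.

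The main obstacle I expect is the higher regularity. Specifically, one must check that $F(x,H)$ is $C^{1,\vartheta}$ or better jointly in $(x,H)$ on the relevant range, so that a $C^{3,\vartheta}$ solution exists; this requires $g\in C^2$ and the smoothness of $\psi$. One must also check that all these higher bounds are used only qualitatively, while the final estimate \eqref{eqn2.8} comes from the a priori lemmas alone.
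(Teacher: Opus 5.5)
Your proposal is correct and follows the paper's proof essentially step for step. The shared steps are the continuity path $g_t^2=\lambda+t(g^2-\lambda)$ starting from the constant-density solution of \cite{CNS}, uniform $C^2$ bounds from Lemmas~\ref{lem:aux-gradient}--\ref{lem:aux-normal} and \eqref{eqn2.18}, uniform ellipticity and concavity from \eqref{eqn2.9}--\eqref{eqn2.10}, constant density on a $P$-dependent boundary collar for boundary regularity, and openness because the linearization has no zeroth-order term.

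The paper adds two technical refinements you did not spell out. First, since $F$ is concave only on $\{\operatorname{tr}H>0\}$, it replaces $F$ by the infimum of its supporting affine functions, which gives an operator concave and uniformly elliptic on all symmetric matrices before Evans--Krylov is applied. Second, it runs the continuity method in $C^{2,\vartheta}$ and only afterwards upgrades to $C^{3,\vartheta}$ for every $\vartheta$, which sidesteps the joint-regularity check on $F$ that you flagged for the implicit function theorem in $C^{3,\vartheta}$.
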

\begin{proof}
Lemmas~\ref{lem:aux-gradient}--\ref{lem:aux-normal} and \eqref{eqn2.18} give the estimates.

For $0\leq t\leq1$, replace $g^2$ in \eqref{eqn2.7} by $\lambda+t(g^2-\lambda)$ and apply the method of continuity. At $t=0$, the Dirichlet problem has the classical constant-density solution of \cite{CNS}. The preceding estimates hold uniformly in $t$.

For fixed $P,\Omega,\Phi,g$, the Hessians stay in a compact subset of $\Gamma_2$. Equations \eqref{eqn2.9}--\eqref{eqn2.10} give uniform ellipticity and concavity. Taking the infimum of supporting affine functions gives a concave uniformly elliptic extension, to which the interior concave regularity estimates apply. Near the boundary the equation has constant density on a fixed $P$-dependent collar, where the usual boundary regularity applies. Schauder estimates give uniform $C^{3,\vartheta}$ bounds for some $\vartheta>0$, hence compactness in the $C^{2,\vartheta}$ space used in the continuity method. The linearization has only second-order terms, so linear Dirichlet theory gives openness. Further Schauder estimates yield $C^{3,\vartheta}$ regularity for every $0<\vartheta<1$. Constants used for this qualitative regularity may depend on higher norms of the fixed data, and do not enter \eqref{eqn2.8}.
\end{proof}

Let $c$ lie between fixed positive constants, and let $w$ be the $2$-admissible solution of $\sigma_2^{1/2}[D^2w]=c$ in $\Omega$, $w=\Phi$ on $\partial\Omega$. Lemma~\ref{thm2.2} with $g=c$, $\lambda=\Lambda=c^2$, and $P=1$ gives
\begin{equation}\label{eqn2.25}
\|Dw\|_\infty\leq C,\quad
\|D^2w\|_\infty\leq C(1+K_4+K_3^2).
\end{equation}

\subsection{Boundary estimates}\label{subsec:boundary-approximation}

We return to the $C^{2,\beta}$ domain and boundary data.

\begin{lemma}\label{lem:smooth-data}
For every sufficiently small $h>0$, there are a smooth defining function $\psi_h$, a smooth domain $\Omega_h\supset\overline\Omega$, and a smooth function $\Phi_h$ such that the domains have uniform $C^{2,\beta}$ geometry and strict mean convexity, and
\begin{equation}\label{eqn2.28}
\begin{aligned}
\|\psi_h-\psi\|_\infty+\|\Phi_h-\Phi\|_\infty&\leq Ch^{2+\beta},\\
\|\psi_h-\psi\|_{C^2}+\|\Phi_h-\Phi\|_{C^2}&\leq Ch^\beta,\\
\|D^3\psi_h\|_\infty+\|D^3\Phi_h\|_\infty&\leq Ch^{-1+\beta},\\
\|D^4\psi_h\|_\infty+\|D^4\Phi_h\|_\infty&\leq Ch^{-2+\beta}.
\end{aligned}
\end{equation}
These estimates hold on a fixed neighborhood of $\overline\Omega$. Moreover,
\begin{equation}\label{eqn2.29}
\psi_h\leq\psi-ch^{2+\beta},\quad
ch^{2+\beta}\leq-d(x)\leq Ch^{2+\beta}
\quad(x\in\partial\Omega_h),
\end{equation}
and $\lambda(D^2\psi_h-\tfrac12\theta I)\in\Gamma_2$. If $K_j(h)$ are the boundary and data norms in Lemma~\ref{thm2.2}, then
\begin{equation}\label{eqn2.30}
1+K_4(h)+K_3(h)^2\leq Ch^{-(2-\beta)}.
\end{equation}
\end{lemma}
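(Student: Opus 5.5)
The approach is to mollify everything at scale $h$: the boundary graphs, the defining function $\psi$, and the extension $\Phi$. Standard mollifier estimates for $C^{2,\beta}$ functions then give the displayed rates in \eqref{eqn2.28}.

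\emph{Mollified data.} Let $\phi_h$ be a standard radial mollifier supported in $B_h$. Since $\psi$ and $\Phi$ are $C^{2,\beta}$ on a fixed neighborhood of $\overline\Omega$, the functions $\psi*\phi_h$ and $\Phi*\phi_h$ are defined on a slightly smaller fixed neighborhood. We set $\Phi_h:=\Phi*\phi_h$. Radial symmetry kills the first-order Taylor term, so
\[
|\Phi_h-\Phi|\leq Ch^{2+\beta}.
\]
The $C^2$ difference is bounded by $[D^2\Phi]_{C^\beta}h^\beta$. For higher derivatives we put one or two derivatives on the kernel and use the $C^\beta$ modulus of $D^2\Phi$ together with $\int D\phi_h=0$. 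This gives $\|D^3\Phi_h\|\leq Ch^{\beta-1}$ and $\|D^4\Phi_h\|\leq Ch^{\beta-2}$.

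\emph{Defining function and shifted domain.} The same estimates hold for $\tilde\psi_h:=\psi*\phi_h$. We then set
\[
\psi_h:=\tilde\psi_h-Ah^{2+\beta}
\]
with $A$ large, so that $\psi_h\leq\psi-ch^{2+\beta}$; this is the first part of \eqref{eqn2.29}. Define $\Omega_h:=\{\psi_h<0\}$. Since $\psi\geq\psi_h+ch^{2+\beta}>0$ on $\partial\Omega_h$, we get $\Omega_h\supset\overline\Omega$. Conversely, on $\partial\Omega_h$ we have $\psi=\psi-\psi_h\leq Ch^{2+\beta}$. Because $|D\psi|\geq c$ near $\partial\Omega$ (from $-\psi\asymp d$ and $\psi\in C^{2,\beta}$), and $\psi$ and $d$ have comparable sizes with the same sign structure outside $\Omega$, this gives $ch^{2+\beta}\leq -d\leq Ch^{2+\beta}$ on $\partial\Omega_h$.

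\emph{Admissibility and geometry.} The admissibility $\lambda(D^2\psi_h-\frac12\theta I)\in\Gamma_2$ follows because $\|D^2\psi_h-D^2\psi\|\leq Ch^\beta$ is small and $\lambda(D^2\psi-\theta I)$ lies in $\Gamma_2$. Here we use that this condition is uniform, i.e. a fixed distance into the cone on the compact neighborhood, which the construction in Subsection~\ref{subsec:auxiliary-dirichlet} provides after shrinking $\theta$ if needed. Strict mean convexity of $\partial\Omega_h$ comes from the level-set formula for mean curvature in terms of $D\psi_h$ and $D^2\psi_h$. These are $C^1$-close to those of $\psi$, and a level set of $\psi$ near $\partial\Omega$ has mean curvature close to that of $\partial\Omega$, hence at least $h_0/2$. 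Uniform $C^{2,\beta}$ graph bounds for $\partial\Omega_h$ follow from the implicit function theorem applied to $\psi_h$. The $C^{2,\beta}$ norm of $\psi_h$ is uniformly bounded, since mollification does not increase it, and $|D\psi_h|\geq c$. The same implicit-function representation gives the graph functions $\rho_h$ $C^3$ and $C^4$ norms of order $h^{\beta-1}$ and $h^{\beta-2}$, because derivatives of $\rho_h$ are polynomial in derivatives of $\psi_h$ divided by powers of $\partial_n\psi_h$. The top-order terms are linear in $D^3\psi_h$ or $D^4\psi_h$, and the remaining products are controlled by the $C^3$ bound squared.

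\emph{Norm bound and main obstacle.} Finally, $K_3(h)\leq Ch^{\beta-1}$ and $K_4(h)\leq Ch^{\beta-2}$. Hence
\[
1+K_4(h)+K_3(h)^2\leq C(h^{\beta-2}+h^{2\beta-2})\leq Ch^{-(2-\beta)},
\]
since $h^{2\beta-2}\leq h^{\beta-2}$ for $h\leq1$. This is \eqref{eqn2.30}. The main technical point is the $C^4$ chart bound: in the fourth derivative of $\rho_h$ one must check that the products of lower derivatives are at worst $(h^{\beta-1})^2=h^{2\beta-2}$, which is still dominated by $h^{\beta-2}$. Everything else is standard mollification bookkeeping.
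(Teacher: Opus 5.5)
There is a genuine gap in your first estimate, and it propagates to \eqref{eqn2.29}. For a nonnegative radial mollifier, symmetry removes only the first-order Taylor term. The second moments $\int y_iy_j\phi_h(y)\,dy=\frac{m_2h^2}{n}\delta_{ij}$, with $m_2:=\int|y|^2\phi(y)\,dy>0$, do not vanish, so
\[
\Phi*\phi_h-\Phi=\frac{m_2h^2}{2n}\,\Delta\Phi+O\bigl(h^{2+\beta}\bigr),
\]
which is only $O(h^2)$. For $\Phi=|x|^2$ the error is exactly $m_2h^2$.

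For $\psi$ the situation is worse. The condition $\lambda(D^2\psi-\theta I)\in\Gamma_2$ forces $\Delta\psi>n\theta$, so the same expansion gives $\tilde\psi_h\geq\psi+ch^2$ for small $h$. Hence your $\psi_h=\tilde\psi_h-Ah^{2+\beta}$ satisfies $\psi_h>\psi$, and your $\Omega_h=\{\psi_h<0\}$ lies inside $\Omega$ instead of containing $\overline\Omega$.

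Neither obvious patch rescues the statement. Enlarging the shift to $Ah^2$ restores $\Omega_h\supset\overline\Omega$, but then $-d\asymp h^2$ on $\partial\Omega_h$, so the upper bound in \eqref{eqn2.29} fails. Shrinking the mollification scale to $h^{1+\beta/2}$ would recover an $h^{2+\beta}$ error. However, for general $C^{2,\beta}$ data it pushes $\|D^3\psi_h\|_\infty$ up to order $h^{(1+\beta/2)(\beta-1)}\gg h^{\beta-1}$, which breaks the last two lines of \eqref{eqn2.28} and \eqref{eqn2.30}.

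These rates are used later. In Proposition~\ref{prop:boundary-approximation}, with $h=R^{1/2}$, one needs the boundary mismatch $h^{2+\beta}=R^{1+\beta/2}\leq R^{1+\alpha_0}$. A mismatch of order $h^2=R$ would destroy the $C^{1,\alpha_0}$ boundary approximation.

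The missing idea is a sign-changing kernel whose second moments also vanish. The paper takes $\kappa_h:=(4\zeta_h-\zeta_{2h})/3$, which has mass one, vanishing first and second moments, support in $B_{2h}$, and $\|\kappa_h\|_{L^1}\leq5/3$. Subtracting the full second-order Taylor polynomial then gives $|\Phi*\kappa_h-\Phi|+|\psi*\kappa_h-\psi|\leq C'h^{2+\beta}$. The shift $-Ah^{2+\beta}$ with $A>C'$ then yields $\psi_h\leq\psi-(A-C')h^{2+\beta}$, hence $\Omega_h\supset\overline\Omega$ and both sides of \eqref{eqn2.29} via $|D\psi|\geq c$.

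With this kernel the rest of your argument goes through as you wrote it:
\begin{itemize}
\item the remaining lines of \eqref{eqn2.28};
\item the admissibility margin;
\item the implicit-function chart bounds;
\item \eqref{eqn2.30}.
\end{itemize}
The only change is that convolution may now increase the $C^{2,\beta}$ norm by the factor $5/3$, rather than not at all.
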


\begin{proof}
Choose a nonnegative radial function $\zeta\in C_c^\infty(B_1)$ of integral one, positive on $\overline B_{1/2}$, and fix $A>0$ sufficiently large. Put
\[
\zeta_h(x):=h^{-n}\zeta(x/h),\quad
\kappa_h:=\frac{4\zeta_h-\zeta_{2h}}{3},\quad
\psi_h:=\psi*\kappa_h-Ah^{2+\beta},\quad
\Phi_h:=\Phi*\kappa_h.
\]
The kernel $\kappa_h$ has mass one and vanishing first and second moments. Taylor's formula gives \eqref{eqn2.28}. With $A$ larger than the constant in $\|\psi*\kappa_h-\psi\|_\infty\leq Ch^{2+\beta}$, the first inequality in \eqref{eqn2.29} holds throughout the fixed neighborhood. The $C^2$ approximation preserves the strict admissibility margin of $\psi$.

Since $|D\psi|\geq c$ on a fixed boundary collar, the nearby zero level of $\psi_h$ bounds a smooth outer domain $\Omega_h$. The same gradient bound gives the displacement estimates in \eqref{eqn2.29}. The implicit function theorem gives uniform $C^{2,\beta}$ graph bounds and preserves the positive lower bound for the mean curvature. Differentiating the graph equations gives $K_3(h)\leq Ch^{-1+\beta}$ and $K_4(h)\leq Ch^{-2+\beta}$. Since $0<h<1$, these imply \eqref{eqn2.30}.
\end{proof}

\begin{proposition}\label{prop:boundary-approximation}
Let $u\in C^4(\overline\Omega)$ be $2$-admissible, with
\[
\lambda\leq\sigma_2[D^2u]\leq\Lambda\quad\text{in }\Omega,
\quad u=\Phi\quad\text{on }\partial\Omega,
\]
and put
\[
\alpha_0:=\min\left\{\frac\beta2,\frac1{12n+1}\right\}.
\]
For some $R_0>0$ and every $0<R<R_0$, there is a $2$-admissible function $v_R\in C^2(\overline{\Omega_{R^{1/2}}})$ such that
\begin{equation}\label{eqn2.35}
\sup_{\substack{x\in\overline\Omega,d(x)\leq R}}
|u(x)-v_R(x)|\leq CR^{1+\alpha_0},\quad \|D^2v_R\|_{L^\infty(\Omega_{R^{1/2}})}\leq CR^{\alpha_0-1}.
\end{equation}
The constants $C,R_0$ depend only on the fixed data.
\end{proposition}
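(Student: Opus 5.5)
The plan is to build $v_R$ as the solution of the auxiliary problem \eqref{eqn2.7} on the smoothed outer domain $\Omega_h$ of Lemma~\ref{lem:smooth-data}, with smoothed boundary data $\Phi_h$. The parameters will be $h=R^{1/2}$ (matching the domain $\Omega_{R^{1/2}}$ in the statement) and a truncation level $P=P(R)$ that is a negative power of $R$. The density $g$ will be a smooth extension of a mollification of $\sqrt f$ to $\Omega_h$, with values in $[\sqrt\lambda,\sqrt\Lambda]$. This is allowed because Lemma~\ref{thm2.2} is independent of derivatives of $g$.

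\textbf{Hessian bound.} Lemma~\ref{thm2.2} combined with \eqref{eqn2.30} gives
\[
\|D^2v_R\|_\infty\leq C\bigl(h^{-(2-\beta)}+P^2\bigr).
\]
With $h=R^{1/2}$ the first term is $R^{\beta/2-1}\leq R^{\alpha_0-1}$, since $\alpha_0\leq\beta/2$. I would therefore choose $P\leq R^{(\alpha_0-1)/2}$, so that $P^2\leq R^{\alpha_0-1}$. This yields the second inequality of \eqref{eqn2.35}.

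\textbf{Closeness in the collar.} The plan is to compare $u$ and $v_R$ on $\Omega$.
\begin{itemize}
\item On $\partial\Omega$: by \eqref{eqn2.28}--\eqref{eqn2.29} and the gradient bound \eqref{eqn2.8}, $|v_R-\Phi_h|\leq C\sqrt P\,h^{2+\beta}$, because $\partial\Omega$ lies within distance $Ch^{2+\beta}$ of $\partial\Omega_h$. Hence $|v_R-\Phi|\leq C\sqrt P\,R^{1+\beta/2}$ there.
\item In the collar: $u$ and $v_R$ solve equations whose densities differ, namely $f$ versus $g^2$ and the constant $\lambda$ where the truncation is active. Instead of comparing densities directly, I would use barriers. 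By \eqref{eqn2.4}, $|u-\Phi|\leq -C\psi$, and the same holds for $v_R$ with $\Phi_h,\psi_h$.
\item Near a boundary point $x_0$, both functions lie between $\Phi+(\text{boundary normal derivative})\,d$ plus errors. I would aim for a quadratic-type bound $|u-\ell_{x_0}|\leq C|x-x_0|^{1+\alpha_0}$ for points at distance $\le R$. This requires a boundary $C^{1,\alpha_0}$ estimate of Savin--Zhang/Krylov type for $u$, obtained from a boundary Harnack/ABP argument for $\mathcal L_u$ applied to $T u$, using the $C^{2,\beta}$ data.
\item For $v_R$ the corresponding statement follows from its Hessian bound on scale $R$: $R^2\cdot R^{\alpha_0-1}=R^{1+\alpha_0}$.
\item The two linear parts (normal derivatives) must agree up to $CR^{\alpha_0}$. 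I would get this by comparing $u-v_R$ against $\pm C(R^{1+\alpha_0}-\psi R^{\alpha_0})$ on the collar where $v_R$ has constant density. There, concavity of $\sigma_2^{1/2}$ gives a one-sided linearized inequality, and the density gap $|f-\lambda|$ is absorbed by the term $-\psi$, using \eqref{eqn2.3}.
\end{itemize}

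\textbf{Main obstacle.} The hardest step is the last one: controlling the density discrepancy uniformly with only $L^\infty$ bounds on $f$. The exponent $1/(12n+1)$ presumably arises from balancing $P$, the collar width $\sqrt\lambda/(\tau P h)$, and the weight $\varrho^a$ with $a=\max\{12,n\}$ from Lemma~\ref{thm2.1}.

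My first attempt would be an ABP estimate on the collar $\{d\le R\}$. The density difference is supported where $\Delta v_R\gtrsim P\,d$, and the weighted interior estimate limits its measure. This gives $\sup|u-v_R|\leq CR\cdot(\text{measure})^{1/n}$, and choosing $P$ as a power of $R$ should produce $R^{1+\alpha_0}$.
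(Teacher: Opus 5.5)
\textbf{There is a genuine gap in the closeness step.} Your construction of $v_R$ (problem \eqref{eqn2.7} on $\Omega_h$ with $h=R^{1/2}$ and $P$ a negative power of $R$) matches the paper, and so does your Hessian bound via \eqref{eqn2.8} and \eqref{eqn2.30}. The trouble is in comparing $u$ with $v_R$.

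First, the boundary $C^{1,\alpha_0}$ expansion of $u$ that you plan to invoke is \eqref{eqn2.27}. Proposition~\ref{thm2.3} deduces it \emph{from} the present proposition, so it is not available here. A Krylov-type argument for $\mathcal L_u$ applied to $Tu$ cannot supply it either: $\mathcal L_u$ has no controlled ellipticity ratio, and $\mathcal L_u(Tu)=T(\sqrt f)$ involves derivatives of $f$.

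Second, your absorption step fails. In the layer $\{0<-\psi_h<\sqrt\lambda/(\tau P)\}$ the cutoff is fully active. So $v_R$ has density $\lambda$ there, while $f$ may be close to $\Lambda$, and the density gap can be of order one on a set of measure comparable to $1/P$. For the midpoint linearization you only have $\sum a^{ij}\psi_{ij}\geq\theta\operatorname{tr}(a^{ij})\geq c$, and $\Delta v_R$ need not be large in this layer. The coefficient of $-\psi$ must therefore be of order one, which gives only $|u-v_R|\leq Cd\leq CR$.

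ABP on the collar $\{d\leq R\}$ does not rescue this. Its constant involves the diameter, which is of order one, not the width $R$. With a defect set of measure up to order $1/P\geq R^{(1-\alpha_0)/2}$, you get only a power of $1/P$, far from $R^{1+\alpha_0}$.

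Two smaller points. Take $\widetilde g=\sigma_2^{1/2}[D^2u]$ exactly on $\overline\Omega$; it is $C^2$ since $u\in C^4$. With a mollified density you lose the one-sided comparison $v_R\geq u-Ch^{2+\beta}$, which uses $\sigma_2[D^2v_R]\leq f$ in $\Omega$. Also, on $\partial\Omega$ use the barrier \eqref{eqn2.4} on $\Omega_h$, which gives $Ch^{2+\beta}$. Your bound $C\sqrt P\,h^{2+\beta}$ exceeds $R^{1+\alpha_0}$ when $\beta$ is small.

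\textbf{The missing idea is a boundary-layer barrier that creates density at least $\Lambda$ at small cost, using that $\sigma_2$ is quadratic.} The paper sets $q_r(s):=-A_1\sqrt r\,s-A_1r^{3/2}(1-e^{-s/r})$. The Hessian of $q_r(-\psi)$ has a normal eigenvalue of order $r^{-1/2}$ and a tangential part of order $\sqrt r\,D^2\psi$. Hence $\sigma_2[D^2(q_r(-\psi))]\geq cA_1^2\geq\Lambda$ on $\{0<-\psi<2r\}$, while $|q_r(-\psi)|\leq2A_1\sqrt r\,(-\psi)$.

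Let $a^{ij}$ be the Newton tensor at the midpoint Hessian, so that $\det(a^{ij})\geq c$ by \eqref{eqn2.1}. By superadditivity of $\sigma_2^{1/2}$, the function $J:=v+q_r(-\psi)-u$ satisfies $\sum a^{ij}J_{ij}\geq0$ everywhere except on $E_P\cap\{-\psi\geq2r\}$, where $E_P=\{\Delta v>-P\psi_h\}$ is the set on which the cutoff acts. On that exceptional set $\Delta v>2Pr$, so Chebyshev's inequality and \eqref{eqn2.5} bound its measure by $C/(Pr)$; no weighted estimate is needed. Your ABP idea is the right tool, but only after the boundary layer has been removed in this way.

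ABP in determinant form on all of $\Omega$ then gives $J\leq C(h^{2+\beta}+(Pr)^{-1/n})$. The linear barrier $-\psi/r$ in $\{0<-\psi<r\}$, together with the lower comparison, converts this into $|u-v|\leq Ch^{2+\beta}+C\bigl(\sqrt r+(Pr)^{-1/n}/r\bigr)(-\psi)$. With $r=R^{2\alpha_0}$, $P=R^{-1/3}$ and $h=R^{1/2}$, this is $CR^{1+\alpha_0}$ on $\{d\leq R\}$. The restriction $\alpha_0\leq1/(12n+1)$ enters through $(9n+6)\alpha_0\leq1$ in this balance, not through the weight $\varrho^a$.
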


\begin{proof}
Let $g:=\sigma_2^{1/2}[D^2u]$. Extend $g$ to a fixed neighborhood as a $C^2$ function, and compose this extension with a smooth truncation equal to the identity on $[\sqrt\lambda,\sqrt\Lambda]$. Denote the resulting function by $\widetilde g$; then
\[
\widetilde g=g\quad\text{on }\overline\Omega,\quad
\frac{\sqrt\lambda}{2}\leq\widetilde g\leq2\sqrt\Lambda.
\]
The estimates below use only these bounds.

Fix small $h>0$ and $P\geq1$. Apply Lemma~\ref{thm2.2} on $\Omega_h$, with $\Phi_h$ in place of $\Phi$, $\psi_h$ in place of $\psi$, and $\widetilde g$ in place of $g$. Use density bounds $\lambda/4$, $4\Lambda$ and the corresponding cutoff $\eta_*$, whose endpoint $\tau$ depends only on $\lambda,\Lambda$. Denote the resulting solution by $v$; it satisfies
\begin{equation}\label{eqn2.31}
\frac{\sqrt\lambda}{2}\leq\sigma_2^{1/2}[D^2v]\leq\widetilde g,
\quad \|D^2v\|_\infty\leq C\bigl(h^{-(2-\beta)}+P^2\bigr).
\end{equation}
Set $E_P:=\{x\in\Omega_h:\Delta v>-P\psi_h\}$. Outside $E_P$, we have $\sigma_2^{1/2}[D^2v]=\widetilde g$. The boundary barriers and \eqref{eqn2.28}--\eqref{eqn2.29} give
\[
|v-u|\leq Ch^{2+\beta}\quad\text{on }\partial\Omega,
\quad v\geq u-Ch^{2+\beta}\quad\text{in }\Omega.
\]
The second inequality uses $\sigma_2^{1/2}[D^2v]\leq g$ in $\Omega$.

For the reverse comparison near the boundary, fix a sufficiently large constant $A_1$ and set, for small $r>0$,
\[
q_r(s):=-A_1\sqrt r\,s-A_1r^{3/2}(1-e^{-s/r}),\quad s\geq0.
\]
Then $q_r(0)=0$, $|q_r(s)|\leq2A_1\sqrt r\,s$, and
\[
-q_r'(s)=A_1\sqrt r(1+e^{-s/r})>0,
\quad q_r''(s)=A_1r^{-1/2}e^{-s/r}>0.
\]
In particular,
\[
D^2(q_r(-\psi))=-q_r'(-\psi)D^2\psi+q_r''(-\psi)D\psi\otimes D\psi
\]
is $2$-admissible. On $\{0<-\psi<2r\}$, the strictness of $\psi$ and $|D\psi|\geq c$ give
\[
\sigma_2[D^2(q_r(-\psi))]
=(q_r'(-\psi))^2\sigma_2[D^2\psi]
-q_r'(-\psi)q_r''(-\psi)\sum_{i,j}\sigma_2^{ij}[D^2\psi]\psi_i\psi_j\geq cA_1^2.
\]
The choice of $A_1$ gives $\sigma_2^{1/2}[D^2(q_r(-\psi))]\geq\sqrt\Lambda$ on this collar. Concavity and homogeneity give
\[
\sigma_2^{1/2}[D^2(v+q_r(-\psi))]
\geq\sigma_2^{1/2}[D^2v]
+\sigma_2^{1/2}[D^2(q_r(-\psi))].
\]

Put
\[
J:=v+q_r(-\psi)-u,\quad
a^{ij}:=\sigma_2^{ij}\!\left[\frac{D^2(v+q_r(-\psi))+D^2u}{2}\right].
\]
Both Hessians in this midpoint are $2$-admissible. Concavity and \eqref{eqn2.1} give $\det(a^{ij})\geq(\sqrt\lambda/2)^n$. Since $\sigma_2$ is quadratic,
\[
\sum_{i,j}a^{ij}J_{ij}=\sigma_2[D^2(v+q_r(-\psi))]-g^2.
\]
The right-hand side is nonnegative on $\{0<-\psi<2r\}$ and on $\Omega\setminus E_P$, by the choice of $q_r$ and $v$, respectively. Hence
\[
\left(\sum_{i,j}a^{ij}J_{ij}\right)_{-}
\leq\Lambda\mathbf1_{E_P\cap\Omega\cap\{-\psi\geq2r\}}.
\]
On this last set, \eqref{eqn2.29} implies $-\psi_h\geq-\psi\geq2r$, and thus $\Delta v>2Pr$. The uniform integral bound \eqref{eqn2.5} on $\Omega_h$ therefore gives
\[
|E_P\cap\Omega\cap\{-\psi\geq2r\}|
\leq\frac1{2Pr}\int_{\Omega_h}\Delta v\,dx
\leq\frac{C}{Pr}.
\]
The ABP estimate in determinant form \cite{CC95} yields
\begin{equation}\label{eqn2.32}
J\leq C\bigl(h^{2+\beta}+(Pr)^{-1/n}\bigr)
\quad\text{in }\Omega.
\end{equation}

In the collar $0<-\psi<r$, we have $\sum a^{ij}J_{ij}\geq0$. The strict admissibility of $\psi$ also implies
\[
\sum_{i,j}a^{ij}\psi_{ij}\geq\theta\sum_i a^{ii}>0.
\]
The function $Ch^{2+\beta}-C(Pr)^{-1/n}\psi/r$ dominates $J$ on both boundary components of the collar. The maximum principle, together with the lower comparison for $v-u$, gives
\begin{equation}\label{eqn2.33}
|v-u|\leq Ch^{2+\beta}
+C\left(\sqrt r+\frac{(Pr)^{-1/n}}{r}\right)(-\psi)
\quad(0\leq-\psi<r).
\end{equation}

For $0<R<R_0$, choose
\begin{equation}\label{eqn2.34}
r:=R^{2\alpha_0},\quad P:=R^{-1/3},\quad h:=R^{1/2},
\end{equation}
and call the corresponding solution $v_R$. Since $2\alpha_0<1$ and $-\psi(x)\leq Cd(x)$ in $\Omega$, the set $\{0\leq d(x)\leq R\}$ lies in $\{0\leq-\psi<r\}$ for small $R$. Moreover,
\[
\sqrt r=R^{\alpha_0},\quad
\frac{(Pr)^{-1/n}}{r}=R^{1/(3n)-2(n+1)\alpha_0/n}\leq R^{\alpha_0},\quad
h^{2+\beta}\leq R^{1+\alpha_0}.
\]
The middle inequality uses $(9n+6)\alpha_0\leq1$, which follows from $n\geq2$ and $\alpha_0\leq1/(12n+1)$. These bounds and \eqref{eqn2.33} prove the first estimate in \eqref{eqn2.35}. Finally,
\[
h^{-(2-\beta)}+P^2
=R^{-1+\beta/2}+R^{-2/3}
\leq2R^{\alpha_0-1},
\]
because $\alpha_0\leq\beta/2$ and $\alpha_0<1/3$. Equation \eqref{eqn2.31} proves the Hessian bound.
\end{proof}

\begin{proposition}\label{thm2.3}
Let $u$ and $\alpha_0$ be as in Proposition~\ref{prop:boundary-approximation}. There is $R_0>0$ such that
\begin{equation}\label{eqn:boundary-gradient-holder}
\|Du\|_{L^\infty(\partial\Omega)}+[Du]_{C^{\alpha_0}(\partial\Omega)}\leq C
\end{equation}
and
\begin{equation}\label{eqn2.27}
|u(x)-u(x_0)-Du(x_0)\cdot(x-x_0)|
\leq C|x-x_0|^{1+\alpha_0}
\end{equation}
whenever $x_0\in\partial\Omega$ and $x\in\overline\Omega\cap B_{R_0}(x_0)$. Moreover,
\begin{equation}\label{eqn2.37}
\int_{\{0<d(x)<t\}}\Delta u\,dx\leq Ct^{\alpha_0},\quad
\forall0<t<R_0.
\end{equation}
The constants $C,R_0$ depend only on the fixed data.
\end{proposition}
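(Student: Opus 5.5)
The plan is to deduce all three estimates from the approximation in Proposition~\ref{prop:boundary-approximation}, using the standard scheme "quadratically controlled approximants at every scale imply $C^{1,\alpha_0}$". Fix $x_0\in\partial\Omega$ and small $R$, and let $v_R$ be the function given there.

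\emph{Step 1: a first-order Taylor bound for $u$.} On $\{d\leq R\}$ near $x_0$, set $\ell_R(x):=v_R(x_0)+Dv_R(x_0)\cdot(x-x_0)$. For $x\in\overline\Omega\cap B_{R}(x_0)$ we have $d(x)\leq R$ and $x\in\Omega_{R^{1/2}}$. Taylor's formula with $\|D^2v_R\|_\infty\leq CR^{\alpha_0-1}$ gives
\[
|v_R-\ell_R|\leq CR^{\alpha_0-1}|x-x_0|^2\leq CR^{1+\alpha_0}.
\]
Combining this with \eqref{eqn2.35} yields $|u-\ell_R|\leq CR^{1+\alpha_0}$ on $\overline\Omega\cap B_R(x_0)$.

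\emph{Step 2: the affine functions converge.} Compare $\ell_R$ and $\ell_{R/2}$ on $\overline\Omega\cap B_{R/2}(x_0)$. This set contains a ball of radius $cR$ because $\Omega$ is a uniform $C^{2}$ domain. Affine functions are controlled by their sup on such a ball, so $|D\ell_R-D\ell_{R/2}|\leq CR^{\alpha_0}$. Summing the geometric series shows that $D\ell_R$ converges to some $p(x_0)$ with $|D\ell_R-p|\leq CR^{\alpha_0}$. Since $u\in C^1(\overline\Omega)$, the bound $|u-\ell_R|\leq CR^{1+\alpha_0}$ forces $p=Du(x_0)$. This gives \eqref{eqn2.27}. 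The bound $\|Du\|_{L^\infty(\partial\Omega)}\leq C$ is already contained in \eqref{eqn2.4}, since $u$ satisfies its hypotheses.

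\emph{Step 3: boundary H\"older seminorm.} Take $x_0,x_1\in\partial\Omega$ with $r:=|x_0-x_1|$ small, and apply \eqref{eqn2.27} at both points on $\overline\Omega\cap B_{2r}(x_0)$. Subtracting the two expansions bounds the affine function $(Du(x_0)-Du(x_1))\cdot(x-x_1)+\text{const}$ by $Cr^{1+\alpha_0}$ on a set containing a ball of radius $cr$. Hence $|Du(x_0)-Du(x_1)|\leq Cr^{\alpha_0}$. For $r\geq R_0$ the gradient bound suffices. This proves \eqref{eqn:boundary-gradient-holder}.

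\emph{Step 4: the integral estimate \eqref{eqn2.37}, which I expect to be the main obstacle.} Integrate by parts on $U_t:=\{0<d<t\}$ for small $t$, where $d$ is $C^{2}$:
\[
\int_{U_t}\Delta u=\int_{\{d=t\}}D u\cdot\nu_t\,dS-\int_{\partial\Omega}u_\nu\,dS,
\]
where $\nu_t=Dd$ on the level set. For $y\in\{d=t\}$, let $x_0\in\partial\Omega$ be its nearest-point projection, so $y=x_0+t\nu(x_0)$. The integrand difference is $u_\nu(y)-u_\nu(x_0)$, modulo the Jacobian of the level-set map, which is $1+O(t)$. To bound it, I would use convexity-type one-dimensional control rather than a pointwise $C^{1,\alpha}$ bound in the interior. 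Since $\Delta u>0$, this is not convexity along the segment, so instead I would rewrite the integral through \eqref{eqn2.27}. Set $w(s):=u(x_0+s\nu)-u(x_0)-s\,u_\nu(x_0)$; then $|w(s)|\leq Cs^{1+\alpha_0}$.

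Rather than differentiating pointwise, integrate $\int_{U_t}\Delta u$ against the weight: compare with $\int_{U_{2t}}(2t-d)_+\Delta u$ over $t$. Green's identity with the Lipschitz function $(2t-d)_+$ expresses this as a boundary term plus $\int u\,\Delta(2t-d)$ and a level-set average of $u$ on $\{d=2t\}$. Each of these is controlled by \eqref{eqn2.27} and \eqref{eqn2.4}, with the affine parts cancelling exactly up to $O(t)$ curvature terms. This gives $\int_{U_{2t}}(2t-d)\Delta u\leq Ct^{1+\alpha_0}$. Since $\Delta u>0$, we have $\int_{U_t}\Delta u\leq t^{-1}\int_{U_{2t}}(2t-d)\Delta u\leq Ct^{\alpha_0}$. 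The delicate point is tracking the curvature terms from $\Delta d$ and the level-set Jacobian, which are $O(t)$ times the bounded boundary gradient and hence harmless.
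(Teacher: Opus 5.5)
Your Steps 1--3 follow the paper's argument: approximants $v_R$, dyadic comparison of the affine parts on interior balls (the paper uses directions in an interior cone), and identification of the limit with $Du(x_0)$. One line is missing in Step 1. Taylor's formula needs the whole segment $[x_0,x]$, not just $x$, to lie in $\Omega_{R^{1/2}}$, and $\Omega$ is only mean convex. The paper gets this from two facts: $\psi\le CR^2$ on $[x_0,x]$, by the $C^2$ bound and the nonpositive endpoint values, and $\psi_{R^{1/2}}\le\psi-cR^{1+\beta/2}$, from \eqref{eqn2.29}. In other words, the outer domain reaches a distance $\sim R^{1+\beta/2}\gg R^2$ beyond $\partial\Omega$. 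In Step 3 you subtract two copies of \eqref{eqn2.27} instead of using $\|D^2v_R\|_\infty R$ along $[x_0,y_0]$. This is equally valid and needs no second segment check.

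For \eqref{eqn2.37} your route is genuinely different. The paper takes a $C^2$ cutoff $\chi_t$ with $|\Delta\chi_t|\le Ct^{-2}$ and support of measure $Ct$. Green's identity then moves the Laplacian onto $v_{2t}$ and $\chi_t$. This uses the Hessian bound and $C^0$ closeness in \eqref{eqn2.35}, together with $\|Dv_R-Du\|_{L^\infty(\partial\Omega)}\le CR^{\alpha_0}$.

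You use only \eqref{eqn2.27}, $\Delta u\ge0$, and normal coordinates. This shows that \eqref{eqn2.37} holds for any subharmonic function with a uniform boundary $C^{1,\alpha_0}$ expansion, independently of the auxiliary problems. The price is bookkeeping, and your stated reason why the curvature terms are harmless should be made precise. Write $x=x_0+s\nu(x_0)$ with area factor $J_s$, where $J_0=1$ and $|\partial_sJ_s|\le C$. Then
\[
\int_\Omega(2t-d)_+\Delta u\,dx
=\int_{\partial\Omega}\int_0^{2t}\partial_s\bigl[u(x_0+s\nu)\bigr]J_s\,ds\,dS-2t\int_{\partial\Omega}u_\nu\,dS.
\]
Integrate by parts in $s$ and insert \eqref{eqn2.27}:
\begin{itemize}
\item The $u(x_0)$ terms cancel \emph{exactly}. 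In your Green's-identity formulation this is the identity $\partial_sJ_s=(\Delta d)J_s$.
\item The $s\,u_\nu(x_0)$ terms give $u_\nu(x_0)\int_0^{2t}J_s\,ds=2t\,u_\nu(x_0)+O(t^2)$.
\item The remainder is $O(t^{1+\alpha_0})$.
\end{itemize}
The curvature terms multiplying $u(x_0)$ have size $t\|u\|_\infty$, not $t$ times the boundary gradient. If they did not cancel, dividing by $t$ would leave only $O(1)$. Only the curvature corrections to the first-order part are harmless, being $O(t^2)$. Alternatively, subtract $\Phi$ first so that the zeroth-order terms vanish identically.
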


\begin{proof}
The boundary gradient bound follows from \eqref{eqn2.4}. Fix $x_0\in\partial\Omega$ and $x\in\overline\Omega$ with $|x-x_0|\leq R<R_0$. The $C^2$ bound for $\psi$ and its nonpositive endpoint values give $\psi\leq CR^2$ on $[x_0,x]$. By \eqref{eqn2.29},
\[
\psi_{R^{1/2}}\leq\psi-cR^{1+\beta/2}<0
\quad\text{on }[x_0,x]
\]
for sufficiently small $R$. Hence this segment lies in $\Omega_{R^{1/2}}$. Taylor's formula for $v_R$ and \eqref{eqn2.35} give
\begin{equation}\label{eqn2.36}
|u(x)-u(x_0)-Dv_R(x_0)\cdot(x-x_0)|\leq CR^{1+\alpha_0}.
\end{equation}

Choose $n$ uniformly linearly independent unit vectors in an interior cone at $x_0$. Apply \eqref{eqn2.36} at scales $R$ and $R/2$ to $x=x_0+cR\xi$ in each direction, with a fixed sufficiently small $c>0$. Subtraction gives
\[
\|Dv_R-Dv_{R/2}\|_{L^\infty(\partial\Omega)}\leq CR^{\alpha_0}.
\]
Thus $Dv_{2^{-j}R}$ converges uniformly on $\partial\Omega$. Equation \eqref{eqn2.36} and the differentiability of $u$ identify the limit as $Du$. Summing the geometric series yields
\[
\|Dv_R-Du\|_{L^\infty(\partial\Omega)}\leq CR^{\alpha_0}.
\]
Substitution in \eqref{eqn2.36}, with $R=|x-x_0|$, proves \eqref{eqn2.27}.

For $x_0,y_0\in\partial\Omega$ and $R=|x_0-y_0|<R_0$, the same segment argument gives
\[
\begin{aligned}
|Du(x_0)-Du(y_0)|
&\leq2\|Du-Dv_R\|_{L^\infty(\partial\Omega)}
+\|D^2v_R\|_\infty R
\leq CR^{\alpha_0}.
\end{aligned}
\]
For larger distances, use the boundary gradient bound.

To prove \eqref{eqn2.37}, use the cutoff $\chi$ from Section~\ref{subsec:auxiliary-dirichlet} and put $\chi_t(x):=1-\chi(d(x)/t-1)$ on $\overline\Omega$. After decreasing $R_0$, $v_{2t}$ is defined and $\chi_t\in C^2(\overline\Omega)$ for $0<t<R_0$, with
\[
|\Delta\chi_t|\leq Ct^{-2},\quad
|\operatorname{supp}\chi_t|\leq Ct.
\]
Since $\chi_t=1$ near $\partial\Omega$, Green's identity gives
\[
\begin{aligned}
\int_{\{0<d(x)<t\}}\Delta u
&\leq\int_\Omega\chi_t\Delta u\\
&=\int_\Omega\chi_t\Delta v_{2t}
+\int_\Omega(u-v_{2t})\Delta\chi_t
-\int_{\partial\Omega}\partial_\nu(u-v_{2t})\,dS
\leq Ct^{\alpha_0}.
\end{aligned}
\]
The last inequality follows from \eqref{eqn2.35} and the boundary gradient approximation above.
\end{proof}

\subsection{Global integrability}\label{subsec:global-integral}

\begin{proof}[Proof of Theorem~\ref{thm1.3}]
Take $\varepsilon_0\in(0,1]$ from Theorem~\ref{thmA.1}\textup{(i)} and set $a:=\max\{12,n\}$ and $N:=2(a+1)(1+2/\beta)$.

\smallskip\noindent
\emph{Step 1. Interior bound.}
For all sufficiently small $t>0$, we prove
\begin{equation}\label{eqn2.42}
\int_{\{d(x)>t\}}(\Delta u)^{1+\varepsilon_0}\,dx\leq Ct^{-N}
\end{equation}
with $C$ and the range of $t$ depending only on the prescribed data.

For small $h>0$, take $\Omega_h,\Phi_h$ from Lemma~\ref{lem:smooth-data} and let $V_h$ solve
\[
\begin{cases}
\sigma_2[D^2V_h]=\lambda/4&\text{in }\Omega_h,\\
V_h=\Phi_h&\text{on }\partial\Omega_h,
\end{cases}
\]
Lemma~\ref{thm2.2} applies with density $\lambda/4$ and $P=1$. The boundary barriers and \eqref{eqn2.25}--\eqref{eqn2.30} give
\begin{equation}\label{eqn2.43}
\|V_h\|_\infty+\|DV_h\|_\infty\leq C,
\quad \|D^2V_h\|_\infty\leq Ch^{-(2-\beta)}.
\end{equation}
By \eqref{eqn2.28}--\eqref{eqn2.29}, there is a fixed $C_0\geq1$ such that $|V_h-\Phi|\leq C_0h^{2+\beta}$ on $\partial\Omega$. Put
\[
W_h:=V_h|_\Omega+C_0h^{2+\beta},\quad
\delta_h:=4C_0h^{2+\beta},\quad
\varrho_h:=W_h-u-\delta_h.
\]
Comparison and the boundary barriers imply
\begin{equation}\label{eqn2.44}
0\leq W_h-u\leq2C_0h^{2+\beta}+Cd(x),
\quad \{\varrho_h>0\}\subset\{d(x)\geq ch^{2+\beta}\}.
\end{equation}

For a sufficiently small fixed $c>0$, the function $W_h+ch^{2-\beta}\psi$ is $2$-admissible, and the quadratic identity for $\sigma_2$, together with \eqref{eqn2.43}, gives
\[
\begin{aligned}
\sigma_2[D^2(W_h+ch^{2-\beta}\psi)]
&=\frac\lambda4+ch^{2-\beta}\sum_{i,j}\sigma_2^{ij}[D^2W_h]\psi_{ij}
+c^2h^{4-2\beta}\sigma_2[D^2\psi]\\
&\leq\frac\lambda4+Cc\leq\lambda.
\end{aligned}
\]
This function equals $W_h\geq u$ on $\partial\Omega$. Comparison therefore yields
\[
W_h-u\geq-ch^{2-\beta}\psi\quad\text{in }\Omega.
\]
Fix a sufficiently small $c_*>0$ and set $h=c_*t^{1/(2\beta)}$. Since $-\psi\asymp d$ in $\Omega$ and $h^{2\beta}/t=c_*^{2\beta}$, the shift $\delta_h$ is absorbed, and
\begin{equation}\label{eqn2.45}
\varrho_h\geq ch^{2-\beta}t\quad\text{on }\{d(x)>t\}.
\end{equation}

For a fixed sufficiently small $c>0$, set $s:=ch^{2+\beta}$. Choose a smooth $D\Subset\Omega$ with boundary sufficiently close to $\partial\Omega$, by smoothing $\psi$ in $C^2$ and taking a nearby negative level. With $D_{2s}$ as in Theorem~\ref{thmA.1}, \eqref{eqn2.44} allows us to ensure
\[
\delta_h>\sup_{\partial D}(W_h-u),\quad
\{\varrho_h>0\}\subset D_{2s}.
\]
The required amplitude, diameter, and mass bounds are uniform by \eqref{eqn2.4}--\eqref{eqn2.5}. Applying Theorem~\ref{thmA.1}\textup{(i)} on $\{d(x)>t\}\subset D$ and using \eqref{eqn2.45}, we obtain
\[
\int_{\{d(x)>t\}}(\Delta u)^{1+\varepsilon_0}\,dx
\leq C(h^{2-\beta}t)^{-a(1+\varepsilon_0)}
h^{-(4+2\beta)\varepsilon_0}\leq Ct^{-N}.
\]
Indeed, substitution of $h=c_*t^{1/(2\beta)}$ gives the exponent
\[
\bigl(a(1+\varepsilon_0)+2\varepsilon_0\bigr)
\left(\frac12+\frac1\beta\right)
\leq N,
\]
since $\varepsilon_0\leq1$.

\smallskip\noindent
\emph{Step 2. Integration of the tail.}
Set
\[
\varepsilon:=\frac{\alpha_0\varepsilon_0}{2(\alpha_0+N)}.
\]
For $K\geq1$ and sufficiently small $t>0$, \eqref{eqn2.37} and \eqref{eqn2.42} give
\[
\begin{aligned}
\int_{\{\Delta u>K\}}\Delta u
&\leq\int_{\{0<d(x)<t\}}\Delta u
+K^{-\varepsilon_0}\int_{\{d(x)>t/2\}}(\Delta u)^{1+\varepsilon_0}\\
&\leq C\bigl(t^{\alpha_0}+K^{-\varepsilon_0}t^{-N}\bigr).
\end{aligned}
\]
Choosing $t=K^{-\varepsilon_0/(\alpha_0+N)}$ for sufficiently large $K$ yields
\[
\int_{\{\Delta u>K\}}\Delta u\leq CK^{-2\varepsilon}.
\]
By the mass bound \eqref{eqn2.5}, this inequality holds for every $K\geq1$ after increasing $C$. Integrating in $K$ and applying Tonelli's theorem, we obtain
\[
\begin{aligned}
\int_\Omega(\Delta u)^{1+\varepsilon}
&\leq\int_\Omega\Delta u
+\varepsilon\int_1^\infty K^{\varepsilon-1}
\int_{\{\Delta u>K\}}\Delta u\,dx\,dK\\
&\leq C+C\varepsilon\int_1^\infty K^{-1-\varepsilon}\,dK
\leq C.
\end{aligned}
\]
Since $|D^2u|\leq\Delta u$ and $0<\varepsilon\leq1$, \eqref{eqn2.4}--\eqref{eqn2.5} also bound the lower-order terms in \eqref{eqn1.8}.

\end{proof}

\section{From \texorpdfstring{$W^{2,1+\varepsilon}$}{W2,1+epsilon} bounds to \texorpdfstring{$C^{1,\alpha}$}{C1,alpha} estimates}\label{sec3}

For a function $v$ and a nonzero increment $\xi$, write
\[
\Delta_{\xi}^{2} v(x):=\frac{v(x+\xi)+v(x-\xi)-2v(x)}{|\xi|^{2}},
\]
whenever $x,x\pm\xi$ belong to the domain of $v$.
Use the fixed radial mollifier $\zeta$ and its dilates from the proof of Lemma~\ref{lem:smooth-data}. For $s>0$, set $v_s:=v*\zeta_s$.

\begin{lemma}\label{thm3.1}
Let $0<\alpha<1$ and let $v\in C^{2}(B_{1})$ be subharmonic. Suppose that, for some constant $K$,
\begin{equation}\label{eqn3.1}
v(x+\xi)+v(x-\xi)-2v(x)\leq K|\xi|^{1+\alpha}
\end{equation}
for $x\in B_{3/4}$ and $|\xi|\leq1/16$.
Then, for $0<s\leq1/128$,
\begin{equation}\label{eqn3.2}
0\leq v_{s}-v\leq CKs^{1+\alpha},\quad
|D^{2}v_{s}|\leq CKs^{\alpha-1}\quad\text{in }B_{5/8},
\end{equation}
and
\begin{equation}\label{eqn3.3}
|Dv-Dv_{s}|\leq C_{\alpha} Ks^{\alpha}\quad\text{in }B_{9/16},\quad [Dv]_{C^{\alpha}(B_{1/2})}\leq C_{\alpha} K,
\end{equation}
where $C$ depends only on $n$ and the fixed kernel, and $C_{\alpha}$ also depends on $\alpha$.
\end{lemma}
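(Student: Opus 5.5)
The estimates in \eqref{eqn3.2} come from writing the mollification as an average of symmetric second differences; \eqref{eqn3.3} then follows from a dyadic telescoping argument in $s$.

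\emph{Step 1: the bounds \eqref{eqn3.2}.} Since $\zeta$ is radial with integral one,
\[
v_s(x)-v(x)=\int\zeta_s(\xi)\bigl(v(x-\xi)-v(x)\bigr)\,d\xi=\frac12\int\zeta_s(\xi)\bigl(v(x+\xi)+v(x-\xi)-2v(x)\bigr)\,d\xi .
\]
Subharmonicity gives $v_s\geq v$, because the average of $v$ over each sphere is at least $v(x)$. The upper bound in \eqref{eqn3.1} with $|\xi|\leq s$ gives $v_s-v\leq CKs^{1+\alpha}$. For $x\in B_{5/8}$ and $s\leq1/128$, all the points involved stay in $B_{3/4}$.

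For the Hessian, differentiate twice and use that $D^2\zeta_s$ is even and annihilates affine functions:
\[
D^2v_s(x)=\frac12\int D^2\zeta_s(\xi)\bigl(v(x+\xi)+v(x-\xi)-2v(x)\bigr)\,d\xi .
\]
This gives only a one-sided bound, since \eqref{eqn3.1} is one-sided. To control the negative part I will split
\[
v(x+\xi)+v(x-\xi)-2v(x)=Q-Q_-,\qquad Q_+=\min\{Q_+,K|\xi|^{1+\alpha}\}.
\]
The $Q_+$ contribution is bounded by $CKs^{\alpha-1}$ via $\int|D^2\zeta_s||\xi|^{1+\alpha}\leq Cs^{\alpha-1}$. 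For the $Q_-$ part I use $\int\zeta_s Q\,d\xi\geq0$, which is the subharmonicity above, to get
\[
\int\zeta_s Q_-\leq\int\zeta_s Q_+\leq CKs^{1+\alpha}.
\]
This bounds $\int Q_-$ over an annulus where $\zeta_s\geq cs^{-n}$. However, $D^2\zeta_s$ is supported on all of $B_s$, not only where $\zeta_s$ is large. I would handle this by a cleaner route. The function $v_s$ is subharmonic and, by \eqref{eqn3.1} (which passes to mollifications by averaging), satisfies
\[
D^2_{ee}v_{s}\leq D_{ee}^2\text{ bound }CKs^{\alpha-1}\quad\text{for every unit }e,
\]
by the one-sided integral formula. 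Since $\Delta v_s\geq0$, each eigenvalue satisfies $\lambda_i\geq-\sum_{j\neq i}\lambda_j\geq-(n-1)CKs^{\alpha-1}$. Hence $|D^2v_s|\leq CKs^{\alpha-1}$. This is the key trick, and it avoids the support issue entirely.

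\emph{Step 2: the bounds \eqref{eqn3.3}.} Compare $v_s$ with $v_{s/2}$. By Step 1, $w:=v_s-v_{s/2}$ satisfies $|w|\leq CKs^{1+\alpha}$ and $|D^2w|\leq CKs^{\alpha-1}$ on $B_{5/8}$. The interpolation inequality $\|Dw\|\leq C(\|w\|/s+s\|D^2w\|)$ on balls of radius $s$ gives $|Dw|\leq CKs^\alpha$ on $B_{9/16}$. Since $v\in C^2$, $Dv_{2^{-k}s}\to Dv$ locally uniformly. Summing the geometric series gives $|Dv-Dv_s|\leq C_\alpha Ks^\alpha$, with $C_\alpha$ coming from $\sum_k2^{-k\alpha}$.

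For the H\"older seminorm, take $x,y\in B_{1/2}$ with $s=|x-y|\leq1/128$. Then
\[
|Dv(x)-Dv(y)|\leq2C_\alpha Ks^\alpha+|D^2v_s|\,s\leq C_\alpha K|x-y|^\alpha .
\]
For larger distances, chain a bounded number of such steps along the segment.

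\emph{Main obstacle.} I expect the Hessian bound for $v_s$ to be the delicate point, since the hypothesis is one-sided. The eigenvalue argument, combining the one-sided directional bound with $\Delta v_s\geq0$, is the step I would verify carefully. In particular I need the directional upper bound to hold with the mollified formula: for every unit $e$, $\partial_{ee}v_s(x)=\lim_{t\to0}\Delta^2_{te}v_s(x)$, and the second difference of $v_s$ is an average of second differences of $v$. That only gives $\leq Kt^{\alpha-1}$, which blows up as $t\to0$. So instead I write
\[
\partial_{ee}v_s=\frac{1}{2}\int\partial_{ee}\zeta_s(\xi)\,Q(x,\xi)\,d\xi ,
\]
and bound it above using the positive part of $\partial_{ee}\zeta_s$ against $Q_+$. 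The part where $\partial_{ee}\zeta_s<0$ multiplies $Q_-$ and contributes with a favorable sign. Thus $\partial_{ee}v_s\leq CKs^{\alpha-1}$, and the eigenvalue argument completes the bound.
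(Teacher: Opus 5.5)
\textbf{There is a genuine gap in your Hessian bound.} Your Step 2 is correct and matches the paper. So is the first half of Step 1: the identity $v_s-v=\frac12\int\zeta_s Q$, the bounds $0\le v_s-v\le CKs^{1+\alpha}$, and the representation $D^2v_s=\frac12\int D^2\zeta_s\,Q$.

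The problem is the ``cleaner route''. It relies on the claim that where $\partial_{ee}\zeta_s<0$ the integrand meets $Q_-$ with a favorable sign, and this is a sign error. Writing $Q=Q_+-Q_-$,
\[
\partial_{ee}\zeta_s\,Q\le(\partial_{ee}\zeta_s)_+Q_++(\partial_{ee}\zeta_s)_-Q_-,
\]
and the last term is nonnegative. A negative kernel value times a negative second difference contributes positively to $\partial_{ee}v_s$. So the one-sided bound $\partial_{ee}v_s\le CKs^{\alpha-1}$ still needs control of $Q_-$ on the whole support of $D^2\zeta_s$. That is exactly the difficulty you had already identified: in general $(\partial_{ee}\zeta_s)_-$ is not dominated by $C\zeta_s$ near the edge of the support, for instance in directions $\xi\perp e$. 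Your eigenvalue argument (upper bounds on all eigenvalues plus $\Delta v_s\ge0$ give a two-sided bound) is valid in itself, but nothing valid feeds into it.

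\textbf{The fix.} Subharmonicity gives $\int\phi(\xi)Q(x,\xi)\,d\xi\ge0$ for every nonnegative radial weight $\phi$, not only for $\phi=\zeta_s$. The paper takes $\phi=\zeta_{2s}$. Since $\zeta>0$ on $\overline B_{1/2}$, we have $\zeta_{2s}\ge cs^{-n}$ on $B_s$. Since $D^2\zeta_s$ is supported in $B_s$, this gives $|D^2\zeta_s|\le Cs^{-2}\zeta_{2s}$. Moreover
\[
\int|Q|\,\zeta_{2s}\le2\int Q_+\,\zeta_{2s}\le CKs^{1+\alpha},
\]
which is legitimate because $2s\le1/64\le1/16$, so \eqref{eqn3.1} applies on the support of $\zeta_{2s}$. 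Combining the two gives the two-sided bound $|D^2v_s|\le Cs^{-2}\int|Q|\,\zeta_{2s}\le CKs^{\alpha-1}$ directly, with no eigenvalue argument. Taking $\phi=\mathbf 1_{B_s}$, that is, the ball mean-value inequality, works equally well. With this repair the rest of your proof goes through as written.
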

\begin{proof}
By subharmonicity and \eqref{eqn3.1},
\[
\begin{aligned}
\int |v(x+\xi)+v(x-\xi)-2v(x)|\zeta_{2s}(\xi)\,d\xi&\leq2\int \bigl(v(x+\xi)+v(x-\xi)-2v(x)\bigr)_{+}
                 \zeta_{2s}(\xi)\,d\xi\\
&\leq CKs^{1+\alpha}.
\end{aligned}
\]
Since $D^2\zeta_s$ is supported in $B_s$ and $\zeta$ is positive on $\overline B_{1/2}$, we have $|D^2\zeta_s|\leq Cs^{-2}\zeta_{2s}$. Together with
\[
D^{2}v_{s}(x)=\frac{1}{2}\int \bigl(v(x+\xi)+v(x-\xi)-2v(x)\bigr)D^{2}\zeta_{s}(\xi)\,d\xi,
\]
this gives the Hessian bound in \eqref{eqn3.2}. The bounds for $v_s-v$ follow from subharmonicity and \eqref{eqn3.1}.

On $B_{5/8}$,
\[
|v_{s}-v_{s/2}|\leq CKs^{1+\alpha},\quad |D^{2}(v_{s}-v_{s/2})|\leq CKs^{\alpha-1}.
\]
Taylor's formula on segments of length $s$ gives $|D(v_{s}-v_{s/2})|\leq CKs^{\alpha}$ on $B_{9/16}$. Summing over dyadic scales and using $Dv_{s}\to Dv$ gives the first bound in \eqref{eqn3.3}. For $x,y\in B_{1/2}$ with $|x-y|\leq1/128$, take $s=|x-y|$ and combine this inequality with the Hessian bound for $v_s$. The estimate for larger distances follows by subdividing the segment $[x,y]$.
\end{proof}

\begin{proof}[Proof of Theorems~\ref{thm1.1} and~\ref{thm1.2}]
Set $A:=\|u\|_{L^{\infty}}$, where the norm is taken on $B_2$ for Theorem~\ref{thm1.1} and on $\Omega$ for Theorem~\ref{thm1.2}.
Take $\alpha=\varepsilon/(n+\varepsilon)$ for Theorem~\ref{thm1.1} and $\alpha$ as in \eqref{eqn1.7} for Theorem~\ref{thm1.2}, so that $0<\alpha\leq\varepsilon/(n+\varepsilon)<1$.
In the global case, take $\alpha_0,R_0$ from Proposition~\ref{thm2.3} and fix $h_*\in(0,1)$ with $16h_*\leq R_0$. We use \eqref{eqn:boundary-gradient-holder} and \eqref{eqn2.27}, with $\alpha_0\geq\alpha$.
Constants depend only on the data in the corresponding theorem.

\Needspace{4\baselineskip}
\smallskip\noindent
\emph{Step 1. Localization.}

For Theorem~\ref{thm1.1}, let $d(x):=1-|x|$ be the signed distance to $\partial B_1$, positive in $B_1$, and define
\begin{equation}\label{eqn3.5}
K:=\max_{\substack{x\in B_{1},\ |e|=1\\0<h\leq d(x)/4}}
d(x)^{n-1+\alpha}h^{1-\alpha}\bigl(\Delta_{he}^{2} u(x)\bigr)_{+}.
\end{equation}
The expression tends to zero as $h\downarrow0$ on compact subsets and is bounded by $C\|D^2u\|_{L^\infty(B_1)}d(x)^n$ near $\partial B_1$. Thus a positive maximum is attained at some $(\bar x,e,\widehat h)$. Put $d_0:=d(\bar x)$ and
\[
v(X):=d_{0}^{n-2}u(\bar x+d_{0}X),\quad h:=\widehat{h}/d_{0},
\quad D:=B_{1},\quad x_{*}:=0.
\]
Then
\begin{equation}\label{eqn3.6}
\begin{gathered}
\lambda(D^{2}v)\in\Gamma_{2},\quad
0<\sigma_{2}\big[D^{2}v\big]\leq\Lambda,\quad
\|v\|_{L^{\infty}(B_{1})}\leq A,\quad
\int_{B_{1}}(\Delta v)^{1+\varepsilon}\leq C,\\
\Delta_{he}^{2} v(0)=Kh^{\alpha-1},\quad
v(X+\xi)+v(X-\xi)-2v(X)\leq4^{n-1+\alpha}K|\xi|^{1+\alpha}.
\end{gathered}
\end{equation}
The last inequality holds for $X\in B_{3/4}$ and $|\xi|\leq1/16$, since $d(\bar x+d_0X)\geq d_0/4$ and hence \eqref{eqn3.5} applies. The factors in the rescaled integral and density are $d_0^{n\varepsilon}\leq1$ and $d_0^{2n}$, respectively. Lemma~\ref{thm3.1} applies with constant $4^{n-1+\alpha}K$.

For Theorem~\ref{thm1.2}, let $d$ again denote the signed distance to $\partial\Omega$, positive in $\Omega$. Set
\begin{equation}\label{eqn3.7}
K:=\max_{\substack{x\in\Omega,\ |e|=1\\
0<h\leq\min\{h_{*},d(x)/4\}}}
h^{1-\alpha}\bigl(\Delta_{he}^{2} u(x)\bigr)_{+}.
\end{equation}
A positive maximum is again attained, since the expression is bounded by $\|D^{2}u\|_{\infty}h^{1-\alpha}$ and $h\leq d(x)/4$.
Choose a maximizing triple $(x_*,e,h)$ and set $v:=u$ and $D:=\Omega$, so that $\int_D(\Delta v)^{1+\varepsilon}\leq C$.
For $d(x)\geq4h$, apply Lemma~\ref{thm3.1} to $X\mapsto u(x+4hX)$. Its base points have boundary distance at least $h$, and its increments have length at most $h/4$, so \eqref{eqn3.7} applies. Rescaling \eqref{eqn3.2} gives
\[
0\leq u_s-u\leq CKs^{1+\alpha},\quad
|D^2u_s|\leq CKs^{\alpha-1}\quad\text{on }B_{5h/2}(x),\quad 0<s\leq h/32.
\]

In either case, we may assume that $K$ is sufficiently large. Rotate the maximizing direction to $e_1$, and write $x=x_*+(t,y)$, $y\in\mathbb{R}^{n-1}$. Set $a:=Kh^{1+\alpha}$ and $M:=Kh^{\alpha-1}$. Then $\Delta_{he_1}^2v(x_*)=M$, $a=Mh^2\leq4A$, $M\geq K$, and
\begin{equation}\label{eqn3.8}
h\leq(4A/K)^{1/(1+\alpha)}.
\end{equation}
In particular, $h<1/16$ for sufficiently large $K$.

\smallskip\noindent
\emph{Step 2. Averaging and transverse ellipticity.}
For $0<\kappa<1/32$, define
\begin{equation}\label{eqn3.9}
\begin{aligned}
U(x)&:=\frac{1-\kappa^2}{2}\bigl(v_{\kappa h}(x+he_{1})+v_{\kappa h}(x-he_{1})\bigr)+\kappa^2\int_{-1}^{1}(1-|t|)v_{\kappa h}(x+the_{1})\,dt,\\
Z(x)&:=U(x)-v(x).
\end{aligned}
\end{equation}
These functions are defined on $B_{3/8}$ in the interior case, with all samples in $B_{1/2}$, and on $\{d>2h\}$ in the global case. On the averaging segments centered in $B_{3/8}$ and $\{d\geq4h\}$, respectively, the mollification bounds give
\[
0\leq v_{\kappa h}-v\leq C\kappa^{1+\alpha}a,\quad
|D^2v_{\kappa h}|\leq C\kappa^{\alpha-1}M.
\]
The second-difference and mollification bounds give
\begin{equation}\label{eqn3.10}
|D^{2}U|\leq C_{\kappa}M,\quad Z\leq Ca
\end{equation}
at these centers, with the second constant independent of $\kappa$. The triangular kernel has zero first moment, so Taylor's formula also gives
\[
\left|\kappa^2\int_{-1}^{1}(1-|t|)
\bigl(v_{\kappa h}(x+the_1)-v_{\kappa h}(x)\bigr)\,dt\right|
\leq C\kappa^{1+\alpha}a.
\]
Since $\kappa^{1+\alpha}\leq\kappa$, fix $\kappa$ sufficiently small, depending only on $n$ and the kernel, so that the preceding bound and $v_{\kappa h}-v$ are at most $a/128$ at the centers in \eqref{eqn3.10}. Since $v_{\kappa h}\geq v$ by subharmonicity,
\begin{equation}\label{eqn3.11}
Z(x_*)\geq\frac{1-\kappa^2}{2}a-\frac a{128}\geq\frac a4.
\end{equation}
In the interior case let $w:=(Z-a/32)_{+}$ on $B_{3/8}$. In the global case define
\[
w(x):=\begin{cases}(Z(x)-a/32)_{+},&x\in\Omega,\ d(x)>3h,\\
0,&\text{otherwise}.
\end{cases}
\]
If $3h<d(x)\leq4h$, every sample in \eqref{eqn3.9} lies in $\Omega\cap B_{6h}(x_0)$ for a nearest boundary point $x_0$. Since the averages preserve affine functions, \eqref{eqn2.27} gives
\[
|Z(x)|\leq Ch^{1+\alpha_{0}}<a/32
\]
for large $K$, since $\alpha_0\geq\alpha$. Thus, in the global case,
\begin{equation}\label{eqn3.12}
\{w>0\}\subset\{x\in\Omega:d(x)>4h\}.
\end{equation}
By \eqref{eqn3.10}, in both cases,
\begin{equation}\label{eqn3.13}
0\leq w\leq Ca,\quad w(x_{*})\geq a/8.
\end{equation}

On $\{w>0\}$, the mollification error gives
$U-v_{\kappa h}=Z-(v_{\kappa h}-v)\geq3a/128$.
Subtracting the triangular error from this inequality yields
\[
\frac{1-\kappa^2}{2}h^2\Delta_{he_1}^2v_{\kappa h}
\geq\frac a{64}.
\]
Set
\begin{equation}\label{eqn3.14}
B(x):=\int_{-1}^{1}(1-|t|)D^2v_{\kappa h}(x+the_1)\,dt.
\end{equation}
The second-difference identity and the mollification bound give
\[
B_{11}=\Delta_{he_1}^2v_{\kappa h}\geq M/32,
\quad \operatorname{tr}B\leq CM.
\]
All points in these averages lie in $D$. By $|D^2v|<\Delta v$, the functions $v_{\kappa h}$ and $U$ are $2$-admissible, and $|B|<\operatorname{tr}B$. If $|\omega|=1$ and $\omega\perp e_1$, then $B_{11}^2+(\omega^TB\omega)^2\leq|B|^2$. Thus
\[
B_{11}^2\leq(\operatorname{tr}B)^2-(\omega^TB\omega)^2
\leq2\operatorname{tr}B\bigl(\operatorname{tr}B-\omega^TB\omega\bigr).
\]
It follows that $\sum_{i,j}\sigma_2^{ij}[B]\omega_i\omega_j\geq cM$.
Put $H(x):=D^2U(x)$. The Newton tensor is linear and positive definite on $\Gamma_2$, so the endpoint terms in \eqref{eqn3.9} give nonnegative contributions and
\[
\sum_{i,j}\sigma_2^{ij}[H]\omega_i\omega_j
\geq\kappa^2\sum_{i,j}\sigma_2^{ij}[B]\omega_i\omega_j\geq cM.
\]
Also $\operatorname{tr}H\geq\kappa^2\operatorname{tr}B\geq\kappa^2B_{11}$.
Together with \eqref{eqn3.10}, these inequalities prove, on $\{w>0\}$,
\begin{equation}\label{eqn3.15}
cM\leq\operatorname{tr}H\leq CM,\quad \sum_{i,j=2}^{n}\sigma_{2}^{ij}[H]\omega_i\omega_j\geq cM|\omega|^{2}
\quad(\omega\perp e_1).
\end{equation}

\smallskip\noindent
\emph{Step 3. The transverse contact set.}
We use the classical ABP contact-set argument \cite[Chapter~3]{CC95}. Set
\begin{equation}\label{eqn3.17}
r:=\min\left\{\frac1{64},\frac{Mh}{\sqrt{1+\Lambda}}\right\}.
\end{equation}
By \eqref{eqn3.8} and $M\geq K$, the ratio $h/r$ is small for large $K$.
Choose a fixed $L\geq1$ such that $w\leq La/4$, and write
$B_s':=\{y\in\mathbb R^{n-1}:|y|<s\}$. Define
\[
\psi(y):=\max_{|t|\leq r}
\left\{w(x_*+(t,y))-\frac{La}{r^2}(t^2+|y|^2)\right\}-\frac a{16},
\quad y\in B_{2r}'.
\]
The function $w$ is semiconvex on this cylinder: it is locally the maximum of a $C^2$ function and zero, and in the global case it vanishes near the cutoff interface. The functions of $y$ in the defining maximum therefore have a common semiconvexity constant, so $\psi$ is semiconvex. Also $\psi(0)\geq a/16$ and $\psi<0$ on $\partial B_r'$.
The cylinder lies in $B_{1/4}$ in the interior case. In the global case it may cross $\partial\Omega$, but all positive contacts and their averaging samples lie in $D$ by \eqref{eqn3.12}.

Let $\mathcal C$ be the compact set of all maximizers of $\psi(y)-p\cdot y$ on $\overline{B_r'}$ as $|p|\leq a/(64r)$.
Comparison with $y=0$ gives $\psi\geq3a/64$ on $\mathcal C$, so $\mathcal C\subset B_{r/2}'$.
At each such point, semiconvexity and the upper supporting plane imply differentiability, with $D\psi=p$. Hence
\[
B_{a/(64r)}'\subset D\psi(\mathcal C).
\]
The upper supporting plane also gives $D^2\psi\leq0$ wherever the Hessian exists on $\mathcal C$.
The gradient is Lipschitz on $\mathcal C$. Indeed, if $\psi$ is $N$-semiconvex, its upper supporting plane at $x\in\mathcal C$ and its lower supporting quadratics at $x,y\in\mathcal C$, evaluated at $y$ and $y+|x-y|e$, give
\[
\begin{aligned}
|x-y|(D\psi(y)-D\psi(x))\cdot e
&\leq\psi(x)+D\psi(x)\cdot(y-x)-\psi(y)+\frac N2|x-y|^2\\
&\leq N|x-y|^2
\end{aligned}
\]
for $|e|=1$ and $|x-y|\leq r/2$. These test points lie in $\overline{B_r'}$, since $x,y\in B_{r/2}'$. Taking the supremum over $e$ proves the Lipschitz bound for these pairs.
The bounded slopes handle larger separations. At almost every density point of $\mathcal C$, the approximate differential of the restricted gradient agrees with the Alexandrov Hessian $D^2\psi$. Alexandrov's theorem and the area formula therefore give
\[
c(a/r)^{n-1}\leq\int_{\mathcal C}\det(-D^2\psi)\,dy.
\]
Here the Hessian is understood almost everywhere; the exceptional set has null gradient image because $D\psi$ is Lipschitz on $\mathcal C$. The semiconvexity constant is used only to justify the area formula.

Fix a point $y\in\mathcal C$ of twice differentiability and an axial maximizer $t$ in the definition of $\psi(y)$.
Since $w\leq La/4$ and $\psi(y)>0$, we have $|t|<r/2$ and $w(x_*+(t,y))>a/16$.
Put
\[
R:=\begin{pmatrix}0&0\\0&-D^2\psi(y)\end{pmatrix}\geq0,
\quad b:=\frac{2La}{r^2}.
\]
Near the contact point, $w=Z-a/32$, and the defining maximum gives
\[
w(x_*+(t+\tau,y+\eta))
-\frac{La}{r^2}\bigl((t+\tau)^2+|y+\eta|^2\bigr)
\leq\psi(y+\eta)+\frac a{16}
\]
for small $(\tau,\eta)$, with equality at $(0,0)$. Comparing second-order expansions at this equality, using $v\in C^2$ and the Alexandrov expansion of $\psi$ at $y$, gives
\[
D^2Z\leq bI-R,\quad D^2v\geq H-bI+R.
\]
Since $R\geq0$ has only a transverse block, the quadratic identity and \eqref{eqn3.15} give
\[
\begin{aligned}
\sigma_2[H+R-bI]
&=\sigma_2[H]+\sum_{i,j=2}^n\sigma_2^{ij}[H]R_{ij}+\sigma_2[R]
 -(n-1)b\bigl(\operatorname{tr}H+\operatorname{tr}R\bigr)+\binom n2b^2\\
&\geq\bigl(cM-(n-1)b\bigr)\operatorname{tr}R-CMb
\geq\frac{cM}{2}\operatorname{tr}R-CMb.
\end{aligned}
\]
Here $b/M=2L(h/r)^2$ is small. In particular, $\operatorname{tr}(H+R-bI)>0$.
Moreover,
\[
\sigma_2[H+R-bI]\leq\sigma_2[D^2v]\leq\Lambda.
\]
If the left-hand side is nonpositive, this is immediate. Otherwise $H+R-bI\in\Gamma_2$, and the inequality follows from ellipticity and $D^2v\geq H+R-bI$.
Combining this upper bound with the preceding lower bound gives
\begin{equation}\label{eqn3.18}
\operatorname{tr}R\leq C\left(\frac\Lambda M+\frac a{r^2}\right)
\leq C\frac a{r^2},
\end{equation}
where the last inequality follows from \eqref{eqn3.17} and $Ma=(Mh)^2$.
Thus $\det(-D^2\psi)\leq C(a/r^2)^{n-1}$ almost everywhere on $\mathcal C$, and the area estimate above gives $|\mathcal C|\geq cr^{n-1}$.

\smallskip\noindent
\emph{Step 4. The integral estimate.}
Extend $(\Delta v)^{1+\varepsilon}$ by zero outside $D$ in the integrals below. For $y\in\mathcal C$, let $t$ be any maximizer in the definition of $\psi(y)$. Then $w(x_*+(t,y))>a/16$, so all averaging samples lie in $D$. At $x_*+(t,y)$, \eqref{eqn3.14} and the lower bound for $B_{11}$ give
\[
cM\leq B_{11}\leq\operatorname{tr}B=\int_{-1}^{1}(1-|s|)
\Delta v_{\kappa h}(x_*+(t+sh,y))\,ds.
\]
Write $\xi=(\xi_1,\xi')$. Jensen's inequality, followed by the change of variable $\tau=t+sh-\xi_1$, yields
\[
\begin{aligned}
cM^{1+\varepsilon}
&\leq\int_{\mathbb R^n}\zeta_{\kappa h}(\xi)
\int_{-1}^1(1-|s|)(\Delta v)^{1+\varepsilon}
\bigl(x_*+(t+sh,y)-\xi\bigr)\,ds\,d\xi\\
&\leq\frac1h\int_{\mathbb R^n}\zeta_{\kappa h}(\xi)
\int_{\mathbb R}(\Delta v)^{1+\varepsilon}
\bigl(x_*+(\tau,y-\xi')\bigr)\,d\tau\,d\xi.
\end{aligned}
\]
The last expression is independent of $t$. Integrating over $y\in\mathcal C$, applying Fubini's theorem after the translation $y\mapsto y-\xi'$, and using $|\mathcal C|\geq cr^{n-1}$, we obtain
\begin{equation}\label{eqn3.21}
M^{1+\varepsilon}r^{n-1}h
\leq CM^{1+\varepsilon}h|\mathcal C|
\leq C\int_D(\Delta v)^{1+\varepsilon}\leq C.
\end{equation}

Substituting \eqref{eqn3.17} and $M=Kh^{\alpha-1}$ gives
\[
C\geq K^{1+\varepsilon}
\min\left\{h^{\alpha(1+\varepsilon)-\varepsilon},
K^{n-1}h^{\alpha(n+\varepsilon)-\varepsilon}\right\}
\geq K^{1+\varepsilon}.
\]
Indeed, $\alpha\leq\varepsilon/(n+\varepsilon)$ makes both powers of $h$ nonpositive, while $h<1$ and $K\geq1$.
For Theorem~\ref{thm1.1}, \eqref{eqn3.5} gives \eqref{eqn3.1} for $u$ on $B_{3/4}$ with constant $4^{n-1+\alpha}K$. Lemma~\ref{thm3.1} controls $[Du]_{C^{\alpha}(B_{1/2})}$. Taylor's formula along fixed segments from $0$ to $e_{i}/4$ bounds $|Du(0)|$ by this seminorm and $A$, proving \eqref{eqn1.3}.

For Theorem~\ref{thm1.2}, applying \eqref{eqn3.3} after the same rescaling, with $4h$ replaced by a radius $r\leq8h_{*}$, gives, whenever $B_{r}(x)\subset\Omega$,
\begin{equation}\label{eqn3.22}
[Du]_{C^{\alpha}(B_{r/2}(x))}\leq CK,
\end{equation}
with a constant independent of $r$.

Let $x\in\Omega$ with $d(x)<h_*$, and choose $x_0\in\partial\Omega$ with $|x-x_0|=d(x)$. Taylor's formula on $B_{d(x)/4}(x)$, using \eqref{eqn3.22} with $r=d(x)/2$, gives
\[
\left|u(x+\tfrac{d(x)}8e_i)-u(x)-\tfrac{d(x)}8 Du(x)\cdot e_i\right|
\leq CKd(x)^{1+\alpha}.
\]
Apply \eqref{eqn2.27} at $x_0$ to the same two points, subtract, and divide by $d(x)/8$. Since $\alpha\leq\alpha_0$, this yields
\[
|Du(x)-Du(x_0)|\leq C\bigl(Kd(x)^{\alpha}+d(x)^{\alpha_0}\bigr)
\leq C(K+1)d(x)^{\alpha}.
\]
Let $x,y\in\Omega$ and suppose that $r:=|x-y|$ is sufficiently small. If $d(x)>2r$ and $d(y)>2r$, use \eqref{eqn3.22} on $B_{2r}(x)$; continuity includes $y\in\overline{B_r(x)}$. Otherwise, $d(x)\leq3r$ and $d(y)\leq3r$. Their nearest boundary points $x_0,y_0$ satisfy $|x_0-y_0|\leq6r$, so the preceding bound and the boundary gradient estimate give
\[
|Du(x)-Du(y)|\leq C(K+1)r^{\alpha}.
\]
The same bound and $\|Du\|_{L^\infty(\partial\Omega)}\leq C$ control the gradient in a fixed boundary collar. On its complement, Taylor's formula in balls of a fixed radius, \eqref{eqn3.22}, and $\|u\|_\infty\leq A$ give $\|Du\|_\infty\leq C(A+K+1)$. This also controls larger separations. Continuity extends the estimate to $\overline\Omega$ and proves \eqref{eqn1.6}.
\end{proof}

\begin{proof}[Proof of Corollary~\ref{thm1.4}]
By Theorem~\ref{thmA.1}\textup{(ii)}, there exists $\varepsilon_{0}\in(0,1]$, depending only on $n$, $\lambda$, and $\Lambda$, such that
\[
\|u\|_{W^{2,1+\varepsilon_{0}}(B_{1})}\leq C,
\]
where $C$ depends only on $n$, $\lambda$, $\Lambda$, and $\|u\|_{L^{\infty}(B_{2})}$.
The proof of Theorem~\ref{thm1.1} uses the Hessian integral only on $B_{1}$, so it applies with $\varepsilon=\varepsilon_{0}$ and proves \eqref{eqn1.9}.
\end{proof}

\begin{proof}[Proof of Corollary~\ref{thm1.5}]
By \eqref{eqn2.4} and Theorem~\ref{thm1.3}, $\|u\|_\infty+\|u\|_{W^{2,1+\varepsilon}(\Omega)}\leq C$ for some $\varepsilon>0$.
Theorem~\ref{thm1.2} with this $\varepsilon$ gives \eqref{eqn1.10}.
\end{proof}

\appendix

\section{Interior \texorpdfstring{$W^{2,1+\varepsilon}$}{W2,1+epsilon} estimates}\label{app:interior}

Interior $W^{2,1}$ regularity under positive upper and lower density bounds is known from \cite[Theorem~1.1]{LyuInterior}; for continuous densities, \cite[Theorem~1.2]{LyuInterior} gives interior $W^{2,p}$ regularity for every $1<p<\infty$.

\begin{theorem}\label{thmA.1}
Let $n\geq2$, $0<\lambda\leq\Lambda$, and $a:=\max\{12,n\}$. There is $\varepsilon_0=\varepsilon_0(n,\lambda,\Lambda)\in(0,1]$ with the following properties.

\smallskip
\noindent\textup{(i)} Let $D$ be a bounded smooth domain with a smooth defining function $\psi$ and a constant $\theta>0$ such that $\lambda(D^2\psi-\theta I)\in\Gamma_2$. Suppose that $u\in C^4(\overline D)$ and $W\in C^2(\overline D)$ are $2$-admissible and
\[
\lambda\leq\sigma_2[D^2u]\leq\Lambda,\quad
\sigma_2[D^2W]=\lambda/4\quad\text{in }D,\quad W\geq u\quad\text{on }\partial D.
\]
Assume, for some $A\geq1$, that
\[
\|u\|_{L^\infty(D)}+\|W\|_{L^\infty(D)}+\int_D\Delta u+\operatorname{diam}(D)\leq A.
\]
Let $d(x)$ denote the signed distance to $\partial D$, positive in $D$ and negative outside, and write $D_t:=\{x\in D:d(x)>t\}$. If $\delta>\sup_{\partial D}(W-u)$ and $0<s\leq1$ satisfy
\[
\{W-u-\delta>0\}\subset D_{2s},
\]
then, for $X:=(W-u-\delta)_+^a\Delta u$,
\begin{equation}\label{eqn2.40}
\int_D X^{1+\varepsilon_0}
\leq C_As^{-2\varepsilon_0}\int_D X,
\end{equation}
where $C_A$ depends only on $n,\lambda,\Lambda,A$. In particular, for a measurable set $U\subset D$ with $\inf_U(W-u-\delta)>0$,
\[
\int_U(\Delta u)^{1+\varepsilon_0}
\leq\frac{C_As^{-2\varepsilon_0}}
{\bigl(\inf_U(W-u-\delta)\bigr)^{a(1+\varepsilon_0)}}.
\]

\smallskip
\noindent\textup{(ii)} If $u\in C^4(B_2)$ is bounded and $2$-admissible, with $\lambda\leq\sigma_2[D^2u]\leq\Lambda$ in $B_2$, then
\begin{equation}\label{eqnA.1}
\|u\|_{W^{2,1+\varepsilon_0}(B_1)}\leq C,
\end{equation}
where $C$ depends only on $n,\lambda,\Lambda,\|u\|_{L^\infty(B_2)}$.
\end{theorem}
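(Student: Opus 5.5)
The plan is to run a weighted Gehring-type (reverse Hölder) argument for $X=(W-u-\delta)_+^a\Delta u$, following the weighted comparison scheme of \cite[Section~3]{LyuInterior} and the auxiliary construction of \cite{CZZ}. The central claim is a weighted sup/level-set estimate, which is what is referred to as \eqref{eqnA.3}, with threshold condition \eqref{eqnA.4}. Set $\varrho:=W-u-\delta$. For any threshold $T$ above a constant multiple of $\int_D X$, we solve an auxiliary problem of the type \eqref{eqn2.7} on $D$. Its density equals that of $u$ where $\varrho^a\Delta v\lesssim T$, and is cut down to the constant $\lambda$ where the weighted trace exceeds $T$. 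For the resulting $v$ we then prove $\sup\varrho^a\Delta v\le CT$ by a maximum principle. The test quantity is $\varrho^a\Delta v$; on the region where the density is constant, $\mathcal L_v\Delta v\ge0$ by concavity. We use $\mathcal L_v(W-u)\ge c\,\mathrm{tr}(\sigma_2^{ij})$ from $\sigma_2[D^2W]=\lambda/4<\lambda$ together with concavity and \eqref{eqn2.1}. The exponent $a\ge\max\{12,n\}$ is large enough to absorb the cross terms $a\varrho^{a-1}D\varrho\cdot D\Delta v$ and $a(a-1)\varrho^{a-2}|D\varrho|^2$. The support condition $\{\varrho>0\}\subset D_{2s}$ keeps the weight away from $\partial D$, and $s$ enters through $|D\varrho|\lesssim s^{-1}$ (interior gradient bounds at distance $s$). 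This is the source of the factor $s^{-2\varepsilon_0}$.

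Next we compare $u$ with $v$ through an ABP estimate in determinant form, exactly as in the proof of Proposition~\ref{prop:boundary-approximation}. The linearized operator $a^{ij}=\sigma_2^{ij}$ at the midpoint Hessian has $\det\ge c$ by \eqref{eqn2.1}. The defect $\sigma_2[D^2v]-\sigma_2[D^2u]$ is supported on the set where $\varrho^a\Delta v>T$, whose measure is at most $T^{-1}\int X$ by Chebyshev. This bounds $|u-v|$ by $C(T^{-1}\int X)^{1/n}$. Converting this smallness into control of $\Delta u$ needs a second-derivative step. I would run the same contact-set/paraboloid argument as in Step~3 of Section~\ref{sec3}: touch $u$ from below by paraboloids of opening $\sim T/\varrho^a$ using the bounded Hessian of $v$. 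The outcome is a decay statement: the measure of $\{X>kT\}$ is at most $\eta\,|\{X>T\}|$ plus a small multiple of $T^{-1}\int_{\{X>T\}}X$, for a fixed large $k$ and small $\eta$. The constants depend only on $n,\lambda,\Lambda,A$ and polynomially on $s^{-1}$.

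Iterating this level-set decay (a Calderón–Zygmund/Gehring layer-cake argument, integrating in $T$ as in Step~2 of the proof of Theorem~\ref{thm1.3}) gives $\int X^{1+\varepsilon_0}\le C s^{-2\varepsilon_0}\int X$ with $\varepsilon_0\sim\log(1/\eta)/\log k$. The "in particular" statement follows by restricting to $U$ and using $\int_D X\le C\int_D\Delta u\le CA^{a+1}$. For (ii), take $D=B_{3/2}$ and let $W$ solve $\sigma_2[D^2W]=\lambda/4$ with $W=u$ on $\partial D$; smoothness follows from \cite{CNS} after a slight inward smoothing. The strict density gap and the barrier $W+c(|x|^2-9/4)$ give $W-u\ge 2\delta$ on $B_{5/4}$ and $\{W-u-\delta>0\}\subset D_{2s}$, with $\delta,s$ depending only on the data; this is the content of \eqref{eq:ball-comparison}. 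Applying (i) with $U=B_1$ and then \eqref{eqn2.5} yields \eqref{eqnA.1}.

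I expect the main obstacle to be the weighted maximum principle for $\varrho^a\Delta v$, namely making the good sign of $\mathcal L_v\varrho$ dominate the gradient cross terms uniformly, independently of the derivatives of the density. The first thing I would try is the test function $\log\Delta v+a\log\varrho$. Its critical-point identity eliminates $D\Delta v$, and the term $\sum\sigma_2^{ij}v_{kil}v_{kjl}/\Delta v$ coming from concavity controls the rest; this is where $a\ge12$ should be needed.
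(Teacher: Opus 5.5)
\textbf{There is a genuine gap.} Your architecture (cutoff auxiliary problem, weighted sup bound for $v$, ABP comparison, layer-cake integration) matches the paper's. However, the weighted maximum principle as you set it up fails. The inequality $\mathcal L_v(W-u)\ge c\operatorname{tr}(\sigma_2^{ij})$ is false.
\begin{itemize}
\item Concavity and homogeneity give $\mathcal L_vW\ge\sigma_2^{1/2}[D^2W]=\sqrt\lambda/2$ and $\mathcal L_vu\ge\sigma_2^{1/2}[D^2u]$. The latter is a \emph{lower} bound on $\mathcal L_vu$, so $\mathcal L_v(W-u)$ contains the uncontrolled $D^2u$ and is not bounded below at all.
\item Even with $v$ in the weight, one only has $\mathcal L_v(W-v)\ge\sigma_2^{1/2}[D^2W]-\sigma_2^{1/2}[D^2v]\ge-\sqrt\lambda$. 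The gap $\lambda/4<\lambda$ has the wrong sign for coercivity: where $D^2W=\frac12D^2v$, one gets $\mathcal L_v(W-v)=-\frac12\sigma_2^{1/2}[D^2v]$, just like $l-u$ in Pogorelov's estimate.
\item For $\log\Delta v+a\log\varrho$ there is then no term growing with $\Delta v$ to absorb $-aC/\varrho$. The concavity term can at most cancel $F^{ij}(\log\Delta v)_i(\log\Delta v)_j$ (with $F^{ij}$ the coefficients of $\mathcal L_v$). Even a Jacobi-type surplus would only yield a multiple of $\varrho^{-2}F^{ij}\varrho_i\varrho_j$, which vanishes where $D\varrho=0$.
\end{itemize}
The paper instead takes $\varrho=W-v-\delta$ and the Pogorelov-type function $\varrho^a\exp(\gamma|Dv|^2/2)v_{\xi\xi}$, with $\gamma=\varrho^{a-1}/(C_0Q)$ and $Q=P+\sup\varrho^{a-1}|Dv|^2$. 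The exponential supplies the coercive term $\gamma\sum_iF^{ii}\lambda_i^2$. A case split according to whether $\lambda_2\ge\lambda_1/(4n)$ handles the third derivatives. Then $Q$ is closed by interpolating the Hessian bound against the $C^{1/2}$ modulus \eqref{eq:local-modulus}. That interpolation, together with $|DW|\le C_As^{-1}$, is what forces $P\ge C_As^{-2}$ and produces the factor $s^{-2\varepsilon_0}$.

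The passage from $v$ back to $u$ also lacks its key idea.
\begin{itemize}
\item The cutoff set $E$ is defined through $\Delta v$, so Chebyshev bounds $|E|$ by $T^{-1}\int\varrho^a\Delta v$, not by $T^{-1}\int X$.
\item Your paraboloid-touching decay would need a Caffarelli-type measure estimate for the linear equation of $v-u$. Its coefficients (the Newton tensor at the midpoint) satisfy only $\det(a^{ij})\ge c$, not uniform ellipticity, so no such estimate is available.
\item Tellingly, the hypothesis on $\psi$ plays no role in your argument.
\end{itemize}
The paper sets $S=\Delta v$, $g=\sigma_2^{1/2}[D^2u]$, $\widetilde g=\sigma_2^{1/2}[D^2v]$, and $\mathcal D:=S\Delta u-\sum_{i,j}u_{ij}v_{ij}-2g\widetilde g$. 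Pointwise, $\mathcal D\ge0$ and $\mathcal D\ge\lambda(\Delta u/S+S/\Delta u)-2\Lambda$. The argument then runs as follows.
\begin{itemize}
\item By polarization, $\mathcal D=(g-\widetilde g)^2-\sigma_2[D^2w]$ with $w=v-u$. Since $w=0$ on $\partial D$ and $D$ is mean convex (one place $\psi$ enters), the divergence-free Newton tensor gives $\int_D\sigma_2[D^2w]\ge0$, hence $\int_D\mathcal D\le\Lambda|E|$.
\item As $\mathcal D\ge2\Lambda$ on $E\cap\{X\le cP\}$, this gives $|E|\le2|\{X>cP\}|$. ABP then gives $\|w\|_\infty\le C|\{X>cP\}|^{1/n}$.
\item The pointwise bound $X\le CP(\mathcal D+1)+2^{a-1}w^a\Delta u$ integrates to $\int_{\{X>P\}}X\le CP|\{X>cP\}|$. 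Here $a\ge n$ is what makes the ABP error $|\{X>cP\}|^{a/n}$ linear.
\item Layer-cake integration over $P\ge P_0$ then gives \eqref{eqn2.40}.
\end{itemize}

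In (ii) there is a smaller, fixable gap. Your global barrier $W+c(|x|^2-9/4)$ needs $\Delta W$ bounded up to $\partial B_{3/2}$, which $\|u\|_\infty$ does not control. Use local barriers on $B_{1/16}(x)$, $x\in B_{5/4}$, with interior Hessian bounds for $W$. The inclusion $\{W-u-\delta>0\}\subset D_{2s}$ also needs the separate bound $W-u\le Cd^{1/2}$, obtained from the harmonic majorant and the $C^{1/2}$ modulus of $u$.
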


\begin{proof}
In \textup{(i)}, unless otherwise stated, $C$ depends only on $n,\lambda,\Lambda$, and $C_A$ may also depend on $A$. Repeated indices in contractions are summed from $1$ to $n$. For $z\in C^3(D)$, the Newton tensor defined in Section~\ref{sec2} satisfies
\[
\sum_i\sigma_2^{ii}[D^2z]=(n-1)\Delta z,\quad
\partial_i\sigma_2^{ij}[D^2z]=0.
\]

\smallskip\noindent
\emph{Local estimates.}
For a $2$-admissible $z\in C^2(D)$ with $0<\sigma_2[D^2z]\leq\Lambda$ and $\|z\|_{L^\infty(D)}\leq A$,
\begin{equation}\label{eq:local-modulus}
|z(x)-z(y)|\leq C_A s^{-1/2}|x-y|^{1/2}
\quad(x,y\in D_{s/2},\ 0<s\leq1),
\end{equation}
where $C_A$ depends only on $n,\Lambda,A$. This follows by scaling from Trudinger-Wang \cite[Theorem~2.7, p.~585]{TW} when $n=3$, and \cite[(6.4), p.~601]{TW}, with $k=2$, $q=n$, $\alpha=1/2$, and zero remainder measure, when $n\geq4$. In dimension two it follows from convexity.

The interior gradient estimate of Trudinger \cite{Trudinger1997} and Chou-Wang \cite{CW} gives, for a $2$-admissible $z\in C^2(\overline{B_R(y)})$ with positive constant density,
\begin{equation}\label{eq:constant-density-gradient}
\sup_{B_{R/2}(y)}|Dz|
\leq\frac{C}{R}\operatorname{osc}_{B_R(y)}z,
\end{equation}
where $C$ depends only on $n$, independently of the value of the constant density.

\smallskip
\noindent\emph{Step 1. Weighted pointwise estimates.}
The following version of \cite[Lemma~2.4]{LyuInterior} requires no quantitative bound for $D^2W$. Let $v\in C^2(D)$ be $2$-admissible, with $u\leq v\leq W$ and $\lambda\leq\sigma_2[D^2v]\leq\Lambda$. Put $\varrho:=W-v-\delta$ and fix
\begin{equation}\label{eqnA.4}
P\geq P_0:=C_As^{-2}.
\end{equation}
If
\begin{equation}\label{eqnA.2}
\sigma_2[D^2v]=\lambda\quad\text{on }\{\varrho>0,\ \varrho^a\Delta v>2P\},
\end{equation}
then
\begin{equation}\label{eqnA.3}
\sup_{\{\varrho>0\}}\varrho^{a-1}|Dv|^2
+\sup_{\{\varrho>0\}}\varrho^a\Delta v\leq CP.
\end{equation}
The proof also applies when the density in \eqref{eqnA.2} is any constant in $[\lambda,\Lambda]$.

Assume $\{\varrho>0\}\ne\varnothing$. Applying \eqref{eq:local-modulus} to $W,v$ and \eqref{eq:constant-density-gradient} to $W$ gives
\begin{equation}\label{eq:weighted-data}
[W-v]_{C^{0,1/2}(D_{s/2})}\leq C_As^{-1/2},\quad
\|DW\|_{L^\infty(D_{s/2})}\leq C_As^{-1}.
\end{equation}
On the open set in \eqref{eqnA.2}, admissibility and continuity of $D^2v$ give local uniform ellipticity, hence smoothness by interior regularity and Schauder estimates. Higher derivatives of $v$ are taken only there.

Set
\[
Q:=P+\sup_{\{\varrho>0\}}\varrho^{a-1}|Dv|^2.
\]
Since $\{\varrho>0\}\Subset D$, $Q$ is finite. Using $0<\varrho\leq A$ and \eqref{eq:weighted-data}, choose the coefficient in $P_0$ so that $A^{a-1}\|DW\|_{L^\infty(D_{s/2})}^2\leq P$. Hence
\[
\varrho^{a-1}\bigl(|Dv|^2+|D\varrho|^2\bigr)
\leq3(Q-P)+2P\leq3Q.
\]
In this step, let $F(H):=\sigma_2^{1/2}[H]$, with its derivatives evaluated at $D^2v$.

Choose a sufficiently large $C_0$, depending only on $n,\lambda,\Lambda$, and put
\[
\gamma:=\frac{\varrho^{a-1}}{C_0Q},\quad
0\leq\frac{\gamma|Dv|^2}{2}
\leq\min\left\{\frac1{32},\frac1{(a-1)(7a-6)}\right\}.
\]
Maximize
\[
\varrho^a\exp\!\left(\frac{\gamma|Dv|^2}{2}\right)v_{\xi\xi}
\quad\text{on }\{\varrho>0\}\times\mathbb S^{n-1}.
\]
The test function extends continuously by zero to the boundary and attains a positive interior maximum, where $\xi$ is an eigenvector for the largest eigenvalue. If $\varrho^a\Delta v\leq2P$ there, then the exponential is at most $e^{1/32}$ and $\lambda_{\max}(D^2v)\leq\Delta v\leq n\lambda_{\max}(D^2v)$ gives $\sup\varrho^a\Delta v\leq CP\leq CQ$.

Otherwise, \eqref{eqnA.2} implies that the density is constant near the maximum. Choose coordinates in which $D^2v$ is diagonal there, with eigenvalues $\lambda_1\geq\cdots\geq\lambda_n$, and differentiate with the maximizing direction $e_1$ fixed. Put
\[
b:=a+(a-1)\frac{\gamma|Dv|^2}{2},\quad a\leq b\leq a+1.
\]
At the maximum,
\[
\frac{v_{11i}}{\lambda_1}
=-b\frac{\varrho_i}{\varrho}-\gamma v_i\lambda_i.
\]
Differentiating once more and using $F^{ij}v_{kij}=0$ and $F^{ij}v_{11ij}=-F^{ij,kl}v_{ij1}v_{kl1}$, we obtain
\[
\begin{aligned}
0&\geq\frac b\varrho F^{ij}\varrho_{ij}
+\left[-a+(a-1)(a-2)\frac{\gamma|Dv|^2}{2}\right]
\sum_iF^{ii}\frac{\varrho_i^2}{\varrho^2}
+\gamma\sum_iF^{ii}\lambda_i^2\\
&\hspace{4.5mm}+\frac{2(a-1)\gamma}{\varrho}
\sum_iF^{ii}\varrho_i v_i\lambda_i
-\frac{F^{ij,kl}v_{ij1}v_{kl1}}{\lambda_1}
-\sum_iF^{ii}\frac{v_{11i}^2}{\lambda_1^2}.
\end{aligned}
\]
By Young's inequality,
\[
\left|\frac{2(a-1)\gamma}{\varrho}
\sum_iF^{ii}\varrho_i v_i\lambda_i\right|
\leq\frac\gamma4\sum_iF^{ii}\lambda_i^2
+4(a-1)^2\gamma|Dv|^2
\sum_iF^{ii}\frac{\varrho_i^2}{\varrho^2}.
\]
The coefficient of $-\sum_iF^{ii}\varrho_i^2/\varrho^2$ is at most
\[
a+(a-1)(7a-6)\frac{\gamma|Dv|^2}{2}\leq a+1.
\]
Admissibility of $W$, concavity, and homogeneity give $F^{ij}\varrho_{ij}\geq F[D^2W]-F[D^2v]\geq-\sqrt\lambda$. We therefore obtain
\[
0\geq\frac{3\gamma}{4}\sum_iF^{ii}\lambda_i^2
-(a+1)\sum_iF^{ii}\frac{\varrho_i^2}{\varrho^2}
-\frac{F^{ij,kl}v_{ij1}v_{kl1}}{\lambda_1}-\sum_iF^{ii}\frac{v_{11i}^2}{\lambda_1^2}
-\frac C\varrho.
\]

If $\lambda_2\geq\lambda_1/(4n)$, then $\Delta v>|D^2v|\geq\sqrt2\lambda_2$. Since
\[
F^{ii}=\frac{\Delta v-\lambda_i}{2\sqrt\lambda},\quad
\sum_iF^{ii}=\frac{(n-1)\Delta v}{2\sqrt\lambda},
\]
we have $F^{22}\geq c\sum_iF^{ii}$ and
$\sum_iF^{ii}\lambda_i^2\geq c\lambda_1^2\sum_iF^{ii}$.
Using the first derivative identity in $\sum_iF^{ii}v_{11i}^2/\lambda_1^2$ gives
\[
\sum_iF^{ii}\frac{v_{11i}^2}{\lambda_1^2}
\leq2b^2\sum_iF^{ii}\frac{\varrho_i^2}{\varrho^2}
+2\gamma^2|Dv|^2\sum_iF^{ii}\lambda_i^2.
\]
Since $\gamma|Dv|^2\leq1/16$, the last term is bounded by $\gamma\sum_iF^{ii}\lambda_i^2/8$. Concavity makes the term containing $-F^{ij,kl}$ nonnegative. Using the bound $\varrho^{a-1}(|Dv|^2+|D\varrho|^2)\leq CQ$ and the definition of $\gamma$, we obtain
\[
0\geq\frac{c\varrho^{a-1}}Q\lambda_1^2\sum_iF^{ii}
-CQ\varrho^{-a-1}\sum_iF^{ii}-\frac C\varrho.
\]

If $\lambda_2<\lambda_1/(4n)$, substitute the first derivative identity into the last square only for $i=1$. For $i\geq2$, use it instead to estimate $\varrho_i^2/\varrho^2$:
\[
(a+1)F^{ii}\frac{\varrho_i^2}{\varrho^2}
\leq\frac{2(a+1)}{b^2}F^{ii}\frac{v_{11i}^2}{\lambda_1^2}
+\frac{2(a+1)}{b^2}\gamma^2F^{ii}v_i^2\lambda_i^2.
\]
The coefficient of each transverse third-derivative square is then at most
\[
1+\frac{2(a+1)}{b^2}\leq1+\frac{2(a+1)}{a^2}<\frac32.
\]
The new terms containing $\gamma^2v_i^2\lambda_i^2$ sum to at most
\[
\left(2+\frac{2(a+1)}{a^2}\right)
\gamma^2|Dv|^2\sum_iF^{ii}\lambda_i^2
\leq\frac\gamma4\sum_iF^{ii}\lambda_i^2.
\]
Thus at least $\gamma\sum_iF^{ii}\lambda_i^2/2$ remains in the second derivative inequality. Direct differentiation of $F$ gives
\[
F^{ij,kl}v_{ij1}v_{kl1}
=\frac{(\sum_i v_{ii1})^2-\sum_i v_{ii1}^2}{2\sqrt\lambda}
-\frac{\bigl(\sum_i(\Delta v-\lambda_i)v_{ii1}\bigr)^2}{4\lambda^{3/2}}-\frac1{\sqrt\lambda}\sum_{i<j}v_{ij1}^2
\leq-\frac1{\sqrt\lambda}\sum_{i<j}v_{ij1}^2,
\]
where the sum of the first two terms is nonpositive by concavity restricted to diagonal matrices. Consequently,
\[
\begin{aligned}
-\frac{F^{ij,kl}v_{ij1}v_{kl1}}{\lambda_1}
-\frac32\sum_{i\geq2}F^{ii}\frac{v_{11i}^2}{\lambda_1^2}
&\geq\sum_{i\geq2}
\frac{\lambda_1-\tfrac34(\Delta v-\lambda_i)}
{\sqrt\lambda\,\lambda_1^2}v_{11i}^2\geq0.
\end{aligned}
\]
Indeed, $v_{1i1}=v_{11i}$, and for $i\geq2$,
\[
\Delta v-\lambda_i\leq\lambda_1+(n-2)\lambda_2
\leq\left(1+\frac{n-2}{4n}\right)\lambda_1<\frac43\lambda_1.
\]
Only the term involving $F^{11}\varrho_1^2/\varrho^2$ remains. Since $\sum_iF^{ii}\lambda_i^2\geq F^{11}\lambda_1^2$, the same bound in terms of $Q$ yields
\[
0\geq\frac{c\varrho^{a-1}}Q F^{11}\lambda_1^2
-CQ\varrho^{-a-1}F^{11}-\frac C\varrho.
\]

In either case, if the term containing $Q\varrho^{-a-1}$ is at least half the positive term, then $\varrho^a\lambda_1\leq CQ$. Otherwise absorb that term and use
\[
F^{11}\lambda_1\geq\frac{\sqrt\lambda}{n}.
\]
Indeed, $0<\sum_{i\geq2}\lambda_i\leq(n-1)\lambda_1$ and
\[
\lambda
=\lambda_1\sum_{i\geq2}\lambda_i
+\sigma_2(\lambda_2,\ldots,\lambda_n)\leq\lambda_1\sum_{i\geq2}\lambda_i
+\frac{n-2}{2(n-1)}\left(\sum_{i\geq2}\lambda_i\right)^2
\leq\frac n2\lambda_1\sum_{i\geq2}\lambda_i.
\]
Multiplying the resulting inequality by $Q\varrho$, and using $\sum_iF^{ii}\geq F^{11}$ in the first case, gives $\varrho^a\lambda_1\leq CQ$. Since the exponential lies between $1$ and $e^{1/32}$, maximality gives
\[
\sup_{\{\varrho>0\}}\varrho^a\Delta v\leq CQ.
\]

For $x$ with $r:=\varrho(x)>0$, choose $C_A$ sufficiently large that $\ell:=sr^2/C_A\leq s/8$ and \eqref{eq:weighted-data} gives
\[
\frac r2\leq\varrho\leq\frac{3r}2\quad\text{on }B_\ell(x).
\]
The Hessian estimate and admissibility imply $|D^2v|\leq\Delta v\leq CQ r^{-a}$ on this ball. For every $0<t\leq\ell$, Taylor's formula in the direction of $Dv(x)$ and \eqref{eq:local-modulus} give
\[
|Dv(x)|\leq C_As^{-1/2}t^{-1/2}+CQr^{-a}t.
\]
Minimize the right-hand side over $0<t\leq\ell$. If the unconstrained minimizer exceeds $\ell$, the second term at $\ell$ is bounded by the first. In either case,
\[
|Dv(x)|^2\leq C_A\left(s^{-2/3}Q^{2/3}r^{-2a/3}+s^{-1}\ell^{-1}\right).
\]
Multiplication by $r^{a-1}$ leaves the nonnegative powers $r^{a/3-1}$ and $r^{a-3}$. Since $r\leq A$, taking the supremum yields
\[
Q\leq P+C_A\left(s^{-2/3}Q^{2/3}+s^{-2}\right)
\leq P+\frac Q2+C_As^{-2}.
\]
Increasing the coefficient in $P_0$ gives $Q\leq4P$ and proves \eqref{eqnA.3}. The same proof applies to bounded admissible pairs $W,v\in C^2(D)$ with $\sigma_2[D^2W]=\lambda/4$, $\lambda\leq\sigma_2[D^2v]\leq\Lambda$, and \eqref{eqnA.2}, provided $\delta>0$, their amplitudes sum to at most $A$, and $\{W-v-\delta>0\}\subset D_{2s}$. No mass bound or regularity up to $\partial D$ is required.

\smallskip
\noindent\emph{Step 2. An auxiliary Dirichlet problem.}
Put $\varrho_u:=W-u-\delta$ and $f:=\sigma_2[D^2u]$. Auxiliary problems of this type were solved in \cite[Theorem~3.1]{CZZ} and, for small density differences, \cite[Proposition~3.1]{LyuInterior}. Here the weight may vanish, and no smallness assumption is made. Use the cutoff $\eta_*$ and its parameter $\tau$ from Section~\ref{subsec:auxiliary-dirichlet}, with the density bounds $\lambda,\Lambda$ of this theorem.

For $P\geq P_0$, we solve
\begin{equation}\label{eqnA.6}
\begin{cases}
\displaystyle\sigma_2[D^2v]+(f-\lambda)
\eta_*\!\left((\varrho_u)_+^a\Delta v/P\right)=f&\text{in }D,\\
v=u&\text{on }\partial D.
\end{cases}
\end{equation}
Since $\delta>\sup_{\partial D}(W-u)$, the fixed weight $(\varrho_u)_+^a$ vanishes in a boundary collar. It belongs to $C^4(\overline D)$: $W$ is smooth on its support by interior regularity, $u\in C^4(\overline D)$, and $a\geq12$.

For $0\leq t\leq1$, $S>0$, and a symmetric matrix $H$ with $\operatorname{tr}H>0$, define
\[
B_t(x,S):=f(x)-t(f(x)-\lambda)
\eta_*\!\left((\varrho_u(x))_+^aS/P\right),\quad F_t(x,H):=\frac{\sigma_2[H]-B_t(x,\operatorname{tr}H)}{\operatorname{tr}H}.
\]
At fixed $x$, the argument of $\eta_*$ is a nonnegative multiple of $S$. Thus $\lambda\leq B_t\leq\Lambda$, $(B_t)_S\leq0$, and
\[
S^2(B_t)_{SS}-2S(B_t)_S+2B_t
\geq2\lambda-(\Lambda-\lambda)\sup_{R>0}R^2(\eta_*''(R))_+\geq\lambda.
\]
The calculation in \eqref{eqn2.9}--\eqref{eqn2.10} therefore shows that $F_t$ is concave and elliptic on $\{\operatorname{tr}H>0\}$, including where the weight vanishes.

Consider the equations $F_t(x,D^2v_t)=0$, with $v_t=u$ on $\partial D$. At $t=0$ the solution is $u$. For $0<\vartheta<1$ and any $2$-admissible solution $v_t\in C^{3,\vartheta}(\overline D)$, the inequalities $F_t(x,D^2u)\geq0$ and $F_t(x,D^2W)<0$ give by comparison
\begin{equation}\label{eqnA.7}
u\leq v_t\leq W,\quad
\lambda\leq\sigma_2[D^2v_t]\leq\Lambda.
\end{equation}
In this continuity argument, constants may depend on $u,W,\psi,D,\delta$, and $P$, but not on $t$. In particular, $S=\Delta v_t\geq\sqrt{2\lambda}$, since $S^2=2B_t+|D^2v_t|^2$.

Write $F_t^{ij}:=\partial F_t/\partial H_{ij}$. Unless their arguments are displayed, derivatives of $F_t$ and $B_t$ below are evaluated at $(x,D^2v_t)$ and $(x,S)$, respectively. Their $x$-subscripts denote partial derivatives with $H$ or $S$ fixed, respectively. On $1\leq(\varrho_u)_+^aS/P\leq\tau$, we have
\[
(P/S)^{1/a}\leq\varrho_u\leq(\tau P/S)^{1/a}.
\]
The chain rule, with the fixed bounds for $D\varrho_u$ and $D^2\varrho_u$, therefore gives
\[
|(B_t)_x|\leq C(1+S^{1/a}),\quad
|(B_t)_{xx}|\leq C(1+S^{2/a}),
\]
\[
|(B_t)_{xS}|\leq C(S^{-1}+S^{1/a-1}),\quad
|(B_t)_{SS}|\leq CS^{-2}.
\]
Off this transition set the cutoff is constant, so the bounds hold throughout the attained range of $S$, including the zero set of the weight.

Let $H_{\partial D}$ be the sum of the principal curvatures, positive on spheres. In an orthonormal frame with $e_n=\nu$, the inward unit normal, we have
\[
\psi_\nu<0,\quad -H_{\partial D}\psi_\nu
=\sum_{i<n}\psi_{ii}=\sigma_2^{nn}[D^2\psi]>0.
\]
Thus $D$ is strictly mean convex. Classical Dirichlet theory \cite{CNS} gives a $2$-admissible solution $W_0\in C^2(\overline D)$ of $\sigma_2[D^2W_0]=\lambda/4$ in $D$, with $W_0=u$ on $\partial D$. Then $u\leq v_t\leq W_0$. Set $\mathcal{L}_t:=F_t^{ij}(x,D^2v_t)\partial_{ij}$. We have $v_t-u-\psi\geq-\psi$ and, at a solution,
\[
F_t^{ij}=\frac{\sigma_2^{ij}[D^2v_t]-(B_t)_S\delta_{ij}}S,\quad (B_t)_S\leq0,\quad \mathcal{L}_t(v_t-u-\psi)\leq-\theta\sum_iF_t^{ii}\leq-\theta(n-1).
\]
Concavity and $F_t(x,D^2u)\geq0$ give $\mathcal{L}_t(v_t-u)\leq0$. Since $\lambda(D^2\psi-\theta I)\in\Gamma_2$, concavity of $\sigma_2^{1/2}$ gives $\sigma_2^{ij}[D^2v_t](\psi_{ij}-\theta\delta_{ij})>0$, and also $\Delta\psi-n\theta>0$. Together with $(B_t)_S\leq0$, these inequalities give $\mathcal{L}_t\psi\geq\theta\sum_iF_t^{ii}$ and prove the last inequality. The derivative bounds above and the positive lower bound for $S$ give $|(F_t)_x|=|(B_t)_x|/S\leq C$, independently of the Hessian. Comparison with $u$ and $W_0$ bounds the boundary gradients, and the maximum principle applied to $\pm(v_t)_k-C(v_t-u-\psi)$ gives a global gradient bound.

Tangential differentiation of $v_t=u$ on $\partial D$ bounds the tangential Hessian components. To estimate the mixed derivatives, fix $x_0\in\partial D$. Translate and rotate coordinates so that $x_0=0$ and $e_n=\nu(0)$. Fix $r>0$ small enough that the boundary in $B_r$ is a graph $x_n=\rho(x')$, with $D$ above it and $\rho(0)=D\rho(0)=0$. Use the tangential field $T$ and its coefficients $a_i(x)$ from \eqref{eq:tangential-field} for this graph.
The derivative matrix of the linear part of $T$ is skew symmetric. Since $F_t$ depends on the Hessian only through its trace and $\sigma_2$, the matrices $(F_t^{ij})$ and $D^2v_t$ commute. The rotational commutator therefore vanishes, and differentiation gives
\[
\mathcal{L}_t(Tv_t)=-a_k(x)(F_t)_{x_k},\quad
|\mathcal{L}_t(T(v_t-u))|\leq C\sum_iF_t^{ii}.
\]
On $\partial D\cap B_r$, the normal component of $T$ vanishes to second order, so $|T(v_t-u)|\leq C|x|^2$. On the spherical boundary, the gradient bound gives the same inequality after increasing $C$. Choose the coefficient of $|x|^2$ first and then that of $v_t-u-\psi$. The maximum principle gives
\[
|T(v_t-u)|\leq C(v_t-u-\psi)+C|x|^2\quad\text{in }D\cap B_r.
\]
At the origin, tangential derivatives of $v_t-u$ vanish and $\partial_nT(v_t-u)=(v_t-u)_{1n}$. Differentiating the preceding inequality into $D$, in the direction $e_n$, bounds the mixed derivatives.

Since $v_t\leq W_0$ with equal boundary values, $(v_t-W_0)_n(0)\leq0$. Twice differentiating this boundary equality and using $\Delta\rho(0)>0$, we obtain
\[
\sum_{i<n}(v_t)_{ii}\geq\sum_{i<n}(W_0)_{ii}
=\sigma_2^{nn}[D^2W_0]\geq c>0.
\]
The block identity
\[
\sigma_2[D^2v_t]
=(v_t)_{nn}\sum_{i<n}(v_t)_{ii}
+\sigma_2[((v_t)_{ij})_{i,j<n}]-\sum_{i<n}(v_t)_{in}^2
\]
then bounds the pure normal derivative.

For the interior trace, differentiate $\sigma_2[D^2v_t]=B_t(x,S)$ once, then once more in distributions. The divergence-free Newton tensor gives
\[
\partial_i\bigl(SF_t^{ij}S_j\bigr)
=\Delta_x B_t+(B_t)_{xS}\cdot DS
+|D^3v_t|^2-|DS|^2.
\]
Here $\Delta_x$ denotes differentiation at fixed $S$. To verify the identity, first differentiate with the Newton tensor. This gives $\Delta[B_t(x,S(x))]+|D^3v_t|^2-|DS|^2$. Subtracting $\operatorname{div}((B_t)_SDS)$ cancels the terms $(B_t)_S\Delta S$ and $(B_t)_{SS}|DS|^2$, and one of the two mixed terms. This computation uses no fourth classical derivative. Since
\[
\partial_i(SF_t^{ij})=-(B_t)_{x_jS}-(B_t)_{SS}S_j,
\]
we obtain
\[
\begin{aligned}
\partial_i\bigl(SF_t^{ij}(S+|x|^2)_j\bigr)
&=2(n-1)S-2n(B_t)_S+\Delta_xB_t+(B_t)_{xS}\cdot DS\\
&\hspace{4.5mm}-2x\cdot\bigl((B_t)_{xS}+(B_t)_{SS}DS\bigr)
+|D^3v_t|^2-|DS|^2.
\end{aligned}
\]
Suppose $S+|x|^2$ has an interior maximum at $x_*$. Then $DS(x_*)=-2x_*$, so $|DS|\leq2\sup_D|x|+1$ near $x_*$ by continuity. The displayed derivative bounds, $(B_t)_S\leq0$, and $a>2$ imply there
\[
\partial_i\bigl(SF_t^{ij}(S+|x|^2)_j\bigr)
\geq2(n-1)S-C(1+S^{2/a}).
\]
If $S(x_*)$ is larger than a fixed constant, this right-hand side is at least one on a smaller ball $B_r(x_*)\Subset D$. For this fixed solution, the matrix $(SF_t^{ij})$ is $C^1$ and uniformly positive definite on that closed ball. For sufficiently small $\epsilon>0$, the function
\[
S(x)+|x|^2-S(x_*)-|x_*|^2
+\epsilon\bigl(r^2-|x-x_*|^2\bigr)
\]
still satisfies the strict divergence inequality, is nonpositive on the boundary, and is positive at $x_*$. Testing against its positive part contradicts the weak maximum principle. Thus the trace, and hence the Hessian, is bounded uniformly in $t$. A boundary maximum is covered by the boundary Hessian estimate.

Together with $\sigma_2[D^2v_t]\geq\lambda$, the trace bound confines the Hessians to a compact subset of $\Gamma_2$. On a slightly larger compact convex subset, $F_t$ is uniformly elliptic, uniformly in $t$. Its extension by the infimum of its supporting affine functions is concave, uniformly elliptic, and agrees with $F_t$ on the attained Hessian range. The supporting-plane coefficients have bounded $x$-derivatives on this fixed matrix set, so the extension is Lipschitz in $x$ with Lipschitz constant bounded by $C(1+|H|)$. Interior Evans--Krylov estimates apply to this extension.

On the fixed boundary collar where the weight vanishes, the original equation is $\sigma_2[D^2v_t]=f$, with fixed positive $C^2$ density and fixed $C^4$ boundary values. Classical boundary regularity for this equation, combined with the interior estimates and then Schauder estimates \cite{CNS,CC95}, gives uniform $C^{3,\vartheta}$ bounds for some $\vartheta\in(0,1)$.

Work in $C^{3,\vartheta'}$ for $0<\vartheta'<\vartheta$, with the prescribed boundary value. The linearized operator has no zeroth-order term and is uniformly elliptic, so the linear Dirichlet theorem and the implicit function theorem give openness. The uniform bounds and compactness in $C^{3,\vartheta'}$ give closedness; the density lower bound preserves admissibility in the limit. Thus \eqref{eqnA.6} is solvable.

For the solution $v=v_1$ of \eqref{eqnA.6}, set $S:=\Delta v$, $g:=\sqrt f$, $\widetilde g:=\sigma_2^{1/2}[D^2v]$, and
\[
\varrho:=W-v-\delta,\quad Y:=(\varrho)_+^aS.
\]
Because $\varrho\leq\varrho_u$, the density equals $\lambda$ wherever $\varrho>0$ and $Y>2\tau P$. Step~1 with threshold $\tau P$ therefore gives
\begin{equation}\label{eqnA.8}
Y\leq CP,
\end{equation}
with the universal constant from Step~1.

\smallskip
\noindent\emph{Step 3. The tail estimate.}
We recall the identity and trace-ratio bound \cite[(3.12)--(3.13)]{LyuInterior} and the integral comparison \cite[Proposition~3.8]{CZZ}. Put
\[
\mathcal D:=S\Delta u-\sum_{i,j}u_{ij}v_{ij}-2g\widetilde g.
\]
Using $|D^2u|^2=(\Delta u)^2-2g^2$ and $|D^2v|^2=S^2-2\widetilde g^2$, we obtain
\begin{equation}\label{eqnA.9}
\mathcal D
=\frac{S\Delta u}{2}\left|\frac{D^2u}{\Delta u}-\frac{D^2v}{S}\right|^2
+\left(g\sqrt{\frac{S}{\Delta u}}-\widetilde g\sqrt{\frac{\Delta u}{S}}\right)^2\geq\lambda\left(\frac{\Delta u}{S}+\frac{S}{\Delta u}\right)-2\Lambda,
\end{equation}
and in particular $\mathcal D\geq0$. Put $w:=v-u\geq0$. Since $w=0$ on $\partial D$, an orthonormal tangential-normal frame with inward normal $\nu$ gives $Dw=w_\nu\nu$ and $\sum_{i<n}w_{ii}=-H_{\partial D}w_\nu$. The Newton tensor is divergence free, so
\[
2\int_D\sigma_2[D^2w]
=-\int_{\partial D}\sum_{i,j}\sigma_2^{ij}[D^2w]w_j\nu_i\,dS
=\int_{\partial D}H_{\partial D}w_\nu^2\,dS\geq0.
\]
Polarization gives $\mathcal D=(g-\widetilde g)^2-\sigma_2[D^2w]$, and therefore
\begin{equation}\label{eqnA.10}
\int_D\mathcal D\leq\int_D(g-\widetilde g)^2.
\end{equation}

Set
\[
c:=\frac\lambda{4\Lambda},\quad
E:=\left\{\eta_*\!\left((\varrho_u)_+^aS/P\right)>0\right\}.
\]
On $E\cap\{X\leq cP\}$, one has
\[
(\varrho_u)_+^aS>P,\quad
(\varrho_u)_+^a\Delta u=X\leq cP,
\]
so $S/\Delta u>1/c$ and \eqref{eqnA.9} gives $\mathcal D\geq2\Lambda$.
Also $(g-\widetilde g)^2\leq\Lambda\mathbf1_E$. Equation \eqref{eqnA.10} therefore implies
\[
|E|\leq|\{X>cP\}|+\frac1{2\Lambda}\int_D\mathcal D
\leq|\{X>cP\}|+\frac12|E|.
\]
Consequently,
\begin{equation}\label{eqnA.11}
|E|\leq2|\{X>cP\}|,\quad
\int_D\mathcal D\leq2\Lambda|\{X>cP\}|.
\end{equation}

Moreover,
\[
\sigma_2^{ij}\left[\frac{D^2u+D^2v}{2}\right]w_{ij}=\widetilde g^2-g^2.
\]
Concavity gives
$\sigma_2^{1/2}[(D^2u+D^2v)/2]\geq(g+\widetilde g)/2\geq\sqrt\lambda$.
Thus \eqref{eqn2.1} and the determinant form of ABP \cite{CC95} yield
\begin{equation}\label{eqnA.12}
\|w\|_\infty\leq C\operatorname{diam}(D)|\{X>cP\}|^{1/n}.
\end{equation}

Since $(\varrho_u)_+\leq(\varrho)_++w$, equations \eqref{eqnA.8}--\eqref{eqnA.9} imply
\[
X\leq2^{a-1}\left(Y\frac{\Delta u}{S}+w^a\Delta u\right)
\leq CP(\mathcal D+1)+2^{a-1}w^a\Delta u.
\]
Integrating over $\{X>P\}\subset\{X>cP\}$ and using the mass bound, we obtain
\[
\int_{\{X>P\}}X\leq CP|\{X>cP\}|+C_A|\{X>cP\}|^{a/n}.
\]
Since $a\geq n$ and $|\{X>cP\}|\leq|D|\leq C_nA^n$, the last term is at most $C_A|\{X>cP\}|$. The choice of the coefficient in $P_0=C_As^{-2}$ absorbs it and gives
\begin{equation}\label{eqnA.13}
\int_{\{X>P\}}X\leq CP|\{X>cP\}|\quad(P\geq P_0),
\end{equation}
with universal $C$. Choose $0<\varepsilon_0\leq1$ so that
$C\varepsilon_0/[(1+\varepsilon_0)c^{1+\varepsilon_0}]\leq1/2$.
Multiply \eqref{eqnA.13} by $\varepsilon_0P^{\varepsilon_0-1}$ and integrate from $P_0$ to infinity. Since $X$ is bounded and compactly supported, Tonelli's theorem gives
\[
\int_DX^{1+\varepsilon_0}\leq
P_0^{\varepsilon_0}\int_DX+\tfrac12\int_DX^{1+\varepsilon_0}.
\]
This gives \eqref{eqn2.40}. The estimate on $U$ follows by dividing by the positive lower bound for $(W-u-\delta)^{a(1+\varepsilon_0)}$ and using $\int_DX\leq A^{a+1}$.

\smallskip
\noindent\emph{Step 4. The interior estimate.}
Let $u\in C^2(B_2)$ be bounded and $2$-admissible, with $\lambda\leq\sigma_2[D^2u]\leq\Lambda$, and put $D:=B_{3/2}$, so that $d(x)=3/2-|x|$. We prove the comparison estimates of \cite[Section~3.1]{LyuInterior} for these $C^2$ data. Approximate $u|_{\partial D}$ uniformly by smooth boundary values with errors $\epsilon_j\to0$, and let $W_j$ solve the corresponding Dirichlet problems with density $\lambda/4$, using \cite{CNS}. Comparison and subharmonicity give
\[
u-\epsilon_j\leq W_j,\quad
\|W_j\|_\infty\leq\|u\|_\infty+\epsilon_j,\quad
\|W_j-W_k\|_\infty\leq\epsilon_j+\epsilon_k.
\]
Thus $W_j$ converges uniformly on $\overline D$ to a continuous function $W$ with boundary value $u$. Apply \cite[Theorem~1.1]{LiWu} to $(2/\sqrt\lambda)W_j$, whose density is one, after rescaling interior balls. The resulting Hessian bounds and interior elliptic regularity give smooth convergence on compact subsets after passing to a subsequence. The positive density and trace lower bound preserve admissibility in the limit. Consequently,
\[
\sigma_2[D^2W]=\lambda/4\quad\text{in }D,\quad
W=u\quad\text{on }\partial D,\quad
u\leq W,\quad \|W\|_\infty\leq\|u\|_\infty.
\]
In particular, $D^2W$ is bounded on $B_{11/8}$ in terms of $n,\lambda,\|u\|_\infty$.
Choose a fixed sufficiently small $c>0$ so that
\[
\sigma_2[D^2W+2cI]
=\frac\lambda4+2c(n-1)\Delta W+2n(n-1)c^2\leq\lambda
\quad\text{on }B_{11/8}.
\]
For $x\in B_{5/4}$, the function
$W(y)+c(|y-x|^2-1/256)$ is admissible and equals $W\geq u$ on $\partial B_{1/16}(x)$. Comparison gives
\[
W(x)-u(x)\geq c/256>0.
\]
All constants depend only on $n,\lambda,\Lambda,\|u\|_\infty$.

By \eqref{eq:local-modulus}, $u$ has a controlled $C^{0,1/2}$ seminorm on $B_{7/4}$. Let $H$ be the harmonic extension of $u|_{\partial D}$. By subharmonicity, $W\leq H$. For $x\in D$, choose a nearest boundary point $x_0\in\partial D$. The Poisson kernel $\mathcal P$ of the ball satisfies
\[
\int_{\partial D}\mathcal P(x,z)|z-x_0|^{1/2}\,dS_z
\leq Cd(x)^{1/2}.
\]
Indeed, integrate the formula for $\mathcal P$ separately over $|z-x_0|\leq d(x)$ and its complement. Thus the boundary modulus of $u$, together with $W\leq H$, gives
\[
0\leq W-u\leq Cd(x)^{1/2}.
\]
Set $\delta:=c/1024$ and choose $0<s\leq1$ sufficiently small. Then
\begin{equation}\label{eq:ball-comparison}
W-u-\delta\geq\delta\quad\text{on }B_{5/4},\quad
\{W-u-\delta>0\}\subset D_{2s}.
\end{equation}
Here $\delta^{-1}+s^{-1}$ is controlled by the same data.

For $u\in C^4(B_2)$, classical Dirichlet theory \cite{CNS} gives a solution in $C^2(\overline D)$, which agrees with $W$ by comparison. A nonnegative cutoff equal to one on $D$ and supported in $B_{7/4}$ gives $\int_D\Delta u\leq C_n\|u\|_\infty$. Apply \textup{(i)} with $A=C_n(1+\|u\|_\infty)$ and $U=B_{9/8}$. Since $|D^2u|\leq\Delta u$, this bounds $\|D^2u\|_{L^{1+\varepsilon_0}(B_{9/8})}$. Interior interpolation between $B_1$ and $B_{9/8}$, together with the amplitude bound, proves \eqref{eqnA.1}.
\end{proof}
\section*{Acknowledgements}

The author is deeply grateful to Professors Dongsheng Li and Yu Yuan for their guidance and support.
AI tools were used as an aid in preliminary mathematical exploration and in refining the exposition.
All mathematical arguments were independently verified by the author, who takes full responsibility for the manuscript.

\enlargethispage{4\baselineskip}

\begin{thebibliography}{99}
\linespread{1}\selectfont

\bibitem{CaffLoc} L. A. Caffarelli, A localization property of viscosity solutions to the Monge-Amp\`ere equation and their strict convexity. Ann. of Math. (2) \textbf{131} (1990), no. 1, 129--134.

\bibitem{CaffWp} L. A. Caffarelli, Interior $W^{2,p}$ estimates for solutions of the Monge-Amp\`ere equation. Ann. of Math. (2) \textbf{131} (1990), no. 1, 135--150.

\bibitem{CaffC1alpha} L. A. Caffarelli, Some regularity properties of solutions of Monge Amp\`ere equation. Comm. Pure Appl. Math. \textbf{44} (1991), no. 8--9, 965--969.

\bibitem{CC95} L. A. Caffarelli and X. Cabr\'e, Fully nonlinear elliptic equations. American Mathematical Society Colloquium Publications, 43. American Mathematical Society, Providence, RI, 1995.

\bibitem{CNS} L. A. Caffarelli, L. Nirenberg and J. Spruck, The Dirichlet problem for nonlinear second-order elliptic equations. III. Functions of the eigenvalues of the Hessian. Acta Math. \textbf{155} (1985), no. 3--4, 261--301.

\bibitem{CWW} C. Chen, X.-J. Wang and Y. Wu, $C^{1,\alpha}$ regularity of convex hypersurfaces with prescribed curvature measures. Indiana Univ. Math. J. \textbf{72} (2023), no. 1, 331--351.

\bibitem{CZZ} R. Chen, X. Zhou and R. Zhu, Interior $C^{2,\alpha}$ regularity for the quadratic Hessian equation. Preprint (2026), arXiv:\allowbreak 2608.29484v2.

\bibitem{CW} K.-S. Chou and X.-J. Wang, A variational theory of the Hessian equation. Comm. Pure Appl. Math. \textbf{54} (2001), no. 9, 1029--1064.

\bibitem{CollinsMooney2017} T. C. Collins and C. Mooney, Dimension of the minimum set for the real and complex Monge-Amp\`ere equations in critical Sobolev spaces. Anal. PDE \textbf{10} (2017), no. 8, 2031--2041.

\bibitem{DF} G. De Philippis and A. Figalli, $W^{2,1}$ regularity for solutions of the Monge-Amp\`ere equation. Invent. Math. \textbf{192} (2013), no. 1, 55--69.

\bibitem{DFS} G. De Philippis, A. Figalli and O. Savin, A note on interior $W^{2,1+\varepsilon}$ estimates for the Monge-Amp\`ere equation. Math. Ann. \textbf{357} (2013), no. 1, 11--22.

\bibitem{HLT} J.-Y. Hung, J. Liu and N. S. Trudinger, A new approach to interior estimates for fully nonlinear elliptic equations. Preprint (2026), arXiv:\allowbreak 2610.15457v2.

\bibitem{ITW} N. Ivochkina, N. S. Trudinger and X.-J. Wang, The Dirichlet problem for degenerate Hessian equations. Comm. Partial Differential Equations \textbf{29} (2004), no. 1--2, 219--235.

\bibitem{JiLyuMA} K. Ji and Y. Lyu, Global $W^{2,1+\varepsilon}$ estimates for the Monge-Amp\`ere equation without quadratic separation. Submitted for publication.

\bibitem{LiWu} Z. Li and K. Wu, Interior Hessian estimates for the quadratic Hessian equation. Preprint (2026), arXiv:\allowbreak 2608.23233v2.

\bibitem{LiuBao} S. Liu and J. Bao, The local regularity for strong solutions of the Hessian quotient equation. J. Math. Anal. Appl. \textbf{303} (2005), no. 2, 462--476.

\bibitem{LyuDegenerate} Y. Lyu, On the Dirichlet problem for the degenerate $k$-Hessian equation. Preprint (2025), arXiv:\allowbreak 2511.09205v2.

\bibitem{LyuGlobal} Y. Lyu, Global $W^{2,p}$ and $C^{2,\alpha}$ regularity for the sigma-2 equation. Preprint (2026), arXiv:\allowbreak 2609.11909v1.

\bibitem{LyuInterior} Y. Lyu, Interior $W^{2,p}$ regularity for the sigma-2 equation. Preprint (2026), arXiv:\allowbreak 2609.08297v1.

\bibitem{Mooney25} C. Mooney, Remarks on the quadratic Hessian equation. J. Math. Study, to appear; arXiv:\allowbreak 2505.14586.

\bibitem{SZ} O. Savin and Q. Zhang, Boundary H\"older gradient estimates for the Monge-Amp\`ere equation. J. Geom. Anal. \textbf{30} (2020), no. 2, 2010--2035.

\bibitem{Schmidt} T. Schmidt, $W^{2,1+\varepsilon}$ estimates for the Monge-Amp\`ere equation. Adv. Math. \textbf{240} (2013), 672--689.

\bibitem{Trudinger1997} N. S. Trudinger, Weak solutions of Hessian equations. Comm. Partial Differential Equations \textbf{22} (1997), no. 7--8, 1251--1261.

\bibitem{TW} N. S. Trudinger and X.-J. Wang, Hessian measures. II. Ann. of Math. (2) \textbf{150} (1999), no. 2, 579--604.

\bibitem{Urbas1988} J. I. E. Urbas, Regularity of generalized solutions of Monge-Amp\`ere equations. Math. Z. \textbf{197} (1988), no. 3, 365--393.

\bibitem{Urbas2000} J. Urbas, Some interior regularity results for solutions of Hessian equations. Calc. Var. Partial Differential Equations \textbf{11} (2000), no. 1, 1--31.

\bibitem{Urbas2001} J. Urbas, An interior second derivative bound for solutions of Hessian equations. Calc. Var. Partial Differential Equations \textbf{12} (2001), no. 4, 417--431.

\bibitem{Urbas2007} J. Urbas, An approximation result for solutions of Hessian equations. Calc. Var. Partial Differential Equations \textbf{29} (2007), no. 2, 219--230.

\bibitem{WuThesis} Y. Wu, The regularity for the Monge-Ampere equation and the $k$-Hessian equation. Ph.D. thesis, University of Wollongong, 2022.

\end{thebibliography}
\end{document}